\newcommand{\abs}[1]{\left| #1 \right|}
\newcommand{\C}{\mathbb{C}}
\newcommand{\N}{\mathbb{N}}
\newcommand{\norm}[1]{\|{#1}\|}
\newcommand{\p}{\partial}
\newcommand{\R}{\mathbb{R}}
\newcommand{\set}[2]{\left\{{#1}:{#2}\right\}}
\newcommand{\trace}[1]{\textrm{tr}\left(#1\right)}
\theoremstyle{plain}
\newtheorem{theorem}[subsection]{Theorem}
\newtheorem{proposition}[subsection]{Proposition}
\newtheorem{corollary}[subsection]{Corollary}
\newtheorem{lemma}[subsection]{Lemma}
\theoremstyle{definition}
\newtheorem{definition}[subsection]{Definition}
\newtheorem{example}[subsection]{Example}
\theoremstyle{remark}
\newtheorem{remark}[subsection]{Remark}
\theoremstyle{conjecture}
\theoremstyle{exercise}
\numberwithin{equation}{section}
\title{Propagation of Global Analytic Singularities for Schr\"{o}dinger Equations with Quadratic Hamiltonians}
\author{Francis White}
\affil{University of California Los Angeles}
\date{}
\begin{document}

\maketitle

\begin{abstract}
We study the propagation in time of $1/2$-Gelfand-Shilov singularities, i.e. global analytic singularities, of tempered distributional solutions of the initial value problem
\begin{align*}
\begin{cases}
u_t + q^w(x,D) u = 0 \\
u|_{t=0} = u_0,
\end{cases}
\end{align*}
on $\R^n$, where $u_0$ is a tempered distribution on $\R^n$, $q=q(x,\xi)$ is a complex-valued quadratic form on $\R^{2n} = \R^n_x \times \R^n_\xi$ with nonnegative real part $\textrm{Re} \ q \ge 0$, and $q^w(x,D)$ is the Weyl quantization of $q$. We prove that the $1/2$-Gelfand-Shilov singularities of the initial data that are contained within a distinguished linear subspace of the phase space $\R^{2n}$, called the \emph{singular space of $q$}, are transported by the Hamilton flow of $\textrm{Im} \ q$, while all other $1/2$-Gelfand-Shilov singularities are instantaneously regularized. Our result extends the observation of Hitrik, Pravda-Starov, and Viola '18 that this evolution is instantaneously globally analytically regularizing when the singular space of $q$ is trivial.
\end{abstract}

\tableofcontents

\section{Introduction and Statement of Results}

In this paper we consider the initial value problem for the Schr\"{o}dinger equation
\begin{align} \label{non self adjoint Schrodinger equation}
    \begin{cases}
    \p_t u + q^w(x,D)u = 0, \ \ t \ge 0, \ x \in \R^n, \\
    u|_{t=0}= u_0,
    \end{cases}
\end{align}
where the initial data $u_0$ is a tempered distribution on $\R^n$, $q = q(x,\xi)$ is a complex-valued quadratic form defined on the phase space $T^*\R^n \cong \R^n_x \times \R^n_\xi$ with non-negative real part $\textrm{Re} \ q \ge 0$ , and $q^w(x,D)$ is the Weyl quantization of $q$ given by
\begin{align*}
    q^w(x,D)u(x) = \frac{1}{(2\pi)^n} \int \int e^{i(x-y) \cdot \xi} q \left(\frac{x+y}{2}, \xi \right) u(y) \, dy \, d\xi, \ \ \ u \in \mathcal{S}'(\R^n),
\end{align*}
in the sense of distributions. This class of equations comprises a number of well-known examples, including the free Schr\"{o}dinger equation where $q(x,\xi) = i\abs{\xi}^2$, the harmonic oscillator where $q(x,\xi) = i(\abs{x}^2+\abs{\xi}^2)$, the heat equation where $q(x,\xi) = \abs{\xi}^2$, as well as the Kramers-Fokker-Planck equation with quadratic potential where $q(x,v,\xi,\eta) = \eta^2+1/4v^2+i(v \cdot \xi-ax \cdot \eta)$ with $(x,v,\xi,\eta) \in \R^{4n} = \R^{2n}_{x,v} \times \R^{2n}_{\xi, \eta}$ and $a \in \R \backslash \{0\}$ a constant.

The problem has been studied by a number of authors, and the majority of works in this area (\cite{GaborSingularities}, \cite{PolynomialSingularities}, \cite{time_dependent}) have focused on the propagation of Gabor singularities. The Gabor singularities of a tempered distribution $u \in \mathcal{S}'(\R^n)$ are captured by the Gabor wavefront set of $u$, which may be defined as follows: given $u \in \mathcal{S}'(\R^n)$, the \emph{Gabor wavefront of $u$}, denoted $\textrm{WF}_G(u)$, is the complement in $\R^{2n} \backslash \{(0,0)\}$ of all points $(x_0,\xi_0)$ for which there exists a symbol $a \in C^\infty(\R^{2n})$ satisfying
\begin{enumerate}
    \item for all $\alpha, \beta \in \N^n$ there exists $C>0$ such that
    \begin{align*}
        \abs{\p^\alpha_x \p^\beta_\xi a(x,\xi)} \le C \langle(x,\xi)\rangle^{-\abs{\alpha}-\abs{\beta}}, \ \ \  (x,\xi) \in \R^{2n},
    \end{align*}
    \item there exists an open conic neighborhood $V$ of $(x_0, \xi_0)$ in $\R^{2n} \backslash \{(0,0)\}$ and $c>0$ so that $\abs{a(x,\xi)} \ge c$ within $V$ for all $\abs{(x,\xi)}$ large, and
    \item $a^w(x,D) u \in \mathcal{S}(\R^n)$.
\end{enumerate}
In particular, a tempered distribution is a Schwartz function if and only if its Gabor wavefront set is empty. Thus, the Gabor wavefront set of a tempered distribution measures its deviation from Schwartz regularity in the sense of both smoothness and decay as $\abs{x} \rightarrow \infty$. For more information, see \cite{GaborWavefront} or the recent survey article \cite{Intro2Gabor}.

The propagation of exponential phase space singularities by the evolution (\ref{non self adjoint Schrodinger equation}) has also been studied. In the work \cite{GelfandShilov}, the authors investigated the propagation of $s$-Gelfand-Shilov singularities for $s>1/2$ with initial data belonging to classes of distributions larger than $\mathcal{S}'(\R^n)$. For $s \ge 1/2$, the $s$-Gelfand-Shilov wavefront of $u \in \mathcal{S}'(\R^n)$ is a closed conic subset of $\R^{2n} \backslash \{(0,0)\}$ that may be conveniently defined using metaplectic Fourier-Bros-Iagolnitzer (FBI) transforms. We recall (see \cite{Minicourse} or Chapter 13 of \cite{SemiclassicalAnalysis}) that a metaplectic FBI transform on $\R^n$ is a Fourier integral operator $\mathcal{T}_\varphi: \mathcal{S}'(\R^n) \rightarrow \textrm{Hol}(\C^n)$ of the form
\begin{align}
	\mathcal{T}_\varphi u(z) = c_\varphi \int_{\R^n} e^{i \varphi(z,y)} u(y) \, dy, \ \ u \in \mathcal{S}'(\R^n),
\end{align}
where $c_\varphi \neq 0$ is a normalizing constant and $\varphi(z,y)$ is a holomorphic quadratic form on $\C^{2n} = \C^n_z \times \C^n_y$ such that $\det{\p^2_{zy} \varphi} \neq 0$ and $\textrm{Im} \ \p^2_{yy} \varphi >0$. The phase function $\varphi$ generates a complex linear canonical transformation $\kappa_\varphi: \C^{2n} \rightarrow \C^{2n}$ given implicitly by
\begin{align}
	\kappa_\varphi: (y, -\p_y \varphi(z,y)) \mapsto (z, \p_z \varphi(z,y)), \ \ (z,y) \in \C^{2n},
\end{align}
and we have
\begin{align}
	\kappa_\varphi(\R^{2n}) = \set{\left(z, \frac{2}{i} \p_z \Phi(z) \right)}{z \in \C^n},
\end{align}
where
\begin{align}
\Phi(z) = \sup_{y\in \R^n}(-\textrm{Im} \ \varphi(z,y))	
\end{align}
is the strictly plurisubharmonic weight associated to $\varphi$. If $\varphi$ is a metaplectic FBI phase function and $\kappa_\varphi: \C^{2n} \rightarrow \C^{2n}$ is the complex linear canonical transformation generated by $\varphi$, let 
\begin{align} \label{def_kappa_flat_intro}
\kappa^\flat_\varphi = \pi_1 \circ \kappa_\varphi: \C^{2n} \rightarrow \C^n,
\end{align}
where $\pi_1: (z,\zeta) \mapsto z$ is the projection in $\C^{2n}$ onto the first factor. We say that a point $(x_0, \xi_0) \in \R^{2n} \backslash \{(0,0)\}$ does not belong to the \emph{$s$-Gelfand-Shilov wavefront set of $u$}, denoted $\textrm{WF}^s(u)$, if there exists a metaplectic FBI phase function $\varphi=\varphi(z,y)$, $(z,y) \in \C^n \times \C^n$, with associated plurisubharmonic weight $\Phi$, and complex canonical transformation $\kappa_\varphi: \C^{2n} \rightarrow \C^{2n}$ such that
\begin{align*}
    \exists C, c>0: \ \abs{\mathcal{T}_\varphi u(z)} \le C e^{\Phi(z)-c \abs{z}^{1/s}}
\end{align*}
for all $z$ in some open conic neighborhood of $\kappa^\flat_\varphi(x_0, \xi_0)$ in $\C^n \backslash \{0\}$. For a review of the basic properties of metaplectic FBI transforms on $\R^n$, see Section 2 below. The notion of the $s$-Gelfand-Shilov wavefront of a tempered distribution was first introduced in the case $s=1/2$ by L. H\"{o}rmander in \cite{HyperbolicOperators} under the name `analytic wavefront set.' Roughly speaking, if $u \in \mathcal{S}'(\R^n)$, then $\textrm{WF}^{1/2}(u)$ measures the failure of $u$ to admit an extension to a holomorphic function $U$ on $\C^n$ satisfying
\begin{align} \label{Gelfand-Shilov estimate first reference}
    \abs{U(w)} \le C e^{-c \abs{x}^2+C \abs{y}^2}, \ \ \ \forall w = x+iy \in \C^n,
\end{align}
for some $C,c>0$. A function $u$ on $\R^n$ admitting an extension $U \in \textrm{Hol}(\C^n)$ obeying the estimate (\ref{Gelfand-Shilov estimate first reference}) is called a \emph{Gelfand-Shilov test function}. Thus, $\textrm{WF}^{1/2}(u)$ is a microlocal measure of how a distribution $u \in \mathcal{S}'(\R^n)$ fails to be a Gelfand-Shilov test function. For additional information concerning general $s$-Gelfand-Shilov wavefront sets and classes of Gelfand-Shilov ultradistributions, see \cite{GelfandShilov}. In the present work, our objective is to study the propagation in time of $s$-Gelfand-Shilov singularities by the evolution (\ref{non self adjoint Schrodinger equation}) for the limiting value $s=1/2$. To the best of our knowledge, this case has not yet been explored.

We now proceed to state the main results of this paper. Let $\R^{2n}$ be equipped with the standard symplectic form
\begin{align} \label{Hamilton matrix of q}
    \sigma((x,\xi), (y,\eta)) = \xi \cdot y - x \cdot \eta, \ \ \ (x,\xi), \ (y,\eta) \in \R^{2n}.
\end{align}
Suppose $q: \R^{2n} \rightarrow \C$ is a complex-valued quadratic form and let $q(\cdot, \cdot)$ denote its symmetric $\C$-bilinear polarization. Because $\sigma$ is non-degenerate, there is a unique $F \in M_{2n \times 2n}(\C)$ such that
\begin{align*}
    q((x,\xi), (y,\eta)) = \sigma((x,\xi), F(y,\eta))
\end{align*}
for all $(x,\xi), (y,\eta) \in \R^{2n}$. This matrix $F$ is called the \emph{Hamilton map} or \emph{Hamilton matrix of $q$} (see Section 21.5 of \cite{HormanderIII}). Explicitly, the Hamilton matrix of $q$ is given by
\begin{align*}
    F = J Q
\end{align*}
where
\begin{align*}
    J = \begin{pmatrix} 0 & I \\ -I & 0 \end{pmatrix}
\end{align*}
is the standard $2n \times 2n$ symplectic matrix and $Q \in M_{2n \times 2n}(\C)$ is the unique complex symmetric matrix such that
\begin{align*}
    q(X) = Q X \cdot X, \ \ \ X \in \R^{2n}.
\end{align*}
Let
\begin{align*}
    \textrm{Re} \ F = \frac{F+\overline{F}}{2} \ \ \textrm{and} \ \ \textrm{Im} \ F = \frac{F-\overline{F}}{2i}
\end{align*}
be the real and imaginary parts of $F$ respectively. The \emph{singular space $S$ of $q$} is defined as the following finite intersection of kernels:
\begin{align} \label{definition of singular space}
    S = \left(\bigcap_{j=0}^{2n-1} \ker{\left[(\textrm{Re} \ F)(\textrm{Im} \ F)^j\right]} \right) \cap \R^{2n}.
\end{align}
The singular space was first introduced by M. Hitrik and K. Pravda-Starov in \cite{QuadraticOperators} where it arose naturally in the study of spectra and semi-group smoothing properties for non-self adjoint quadratic differential operators. The concept of the singular space has since been shown to play a key role in the understanding of hypoelliptic and spectral properties of non-elliptic quadratic differential operators. See for instance \cite{SemiclassicalHypoelliptic}, \cite{EigenvaluesAndSubelliptic}, \cite{ContractionSemigroup}, \cite{SubellipticEstimatesQuadraticDifferentialOperators}, \cite{NonEllipticQuadraticFormsandSemiclassicalEstimates}, and \cite{SpectralProjectionsAndResolventBounds}.

In the work \cite{GeneralizedMehler}, it was shown that when the quadratic form $q$ has non-negative real part $\textrm{Re} \ q \ge 0$, the maximal closed realization of the quadratic differential operator $q^w(x,D)$ on $L^2(\R^n)$ is maximally accretive and generates a contraction semigroup $(e^{-tq^w(x,D)})_{t \ge 0}$. In this same work it was also proven that for each $t \ge 0$ the operator $e^{-tq^w(x,D)}$ both restricts to a continuous linear transformation
\begin{align} \label{bounded Schwartz to Schwartz}
    \mathcal{S}(\R^n) \rightarrow \mathcal{S}(\R^n)
\end{align}
and admits a unique extension to a continuous linear transformation
\begin{align} \label{bounded S prime to S prime}
    \mathcal{S}'(\R^n) \rightarrow \mathcal{S}'(\R^n).
\end{align}
Consequently,
\begin{align*}
    e^{-tq^w(x,D)} u_0
\end{align*}
is a well-defined element of $\mathcal{S}'(\R^n)$ for any $u_0 \in \mathcal{S}'(\R^n)$ and $t \ge 0$. Our goal is to understand the relationship between $\textrm{WF}^{1/2}(e^{-tq^w(x,D)}u_0)$ and $\textrm{WF}^{1/2}(u_0)$ in terms of the Hamiltonian dynamics generated by $q$.

Let us point out that the topic of the partial Gelfand-Shilov regularizing properties for the semigroup $e^{-tq^w(x,D)}$ has received considerable attention over the last several years. The general aim of work in this direction has been to prove sharp microlocal Gelfand-Shilov smoothing estimates in directions transverse to the singular space $S$ for various time regimes. We mention the works \cite{SubellipticEstimates}, \cite{PartialGS}, and \cite{alphonse2020polar}. In particular, it follows from Theorem 1.2 in \cite{SubellipticEstimates} that
\begin{align*}
    \textrm{WF}^{1/2} \left(e^{-tq^w(x,D)} u_0 \right) = \emptyset
\end{align*}
for all $u_0 \in L^2(\R^n)$ and $0<t \ll 1$ when $S = \{0\}$. In other words, if the singular space of the quadratic form $q$ is trivial, then the propagator for the evolution (\ref{non self adjoint Schrodinger equation}) is `instantaneously globally analytically regularizing.' Our main theorem extends this observation to the case of general $S$.

\begin{theorem} \label{main theorem of the paper}
Let $q$ be a complex-valued quadratic form on $\R^{2n}$ with $\textrm{Re} \ q \ge 0$ and $S$ be the singular space of $q$. For every $u_0 \in \mathcal{S}'(\R^n)$ and $t >0$, we have
\begin{align} \label{main inclusion}
\textrm{WF}^{1/2}(e^{-tq^w(x,D)} u_0) = \exp{(t H_{\textrm{\emph{Im}} \ q})(\textrm{WF}^{1/2}(u_0) \cap S)}.
\end{align}
Here $H_{\textrm{Im} \ q}$ denotes the Hamilton vector field of $\textrm{\emph{Im}} \ q$ on $\R^{2n}$ taken with respect to the symplectic form $\sigma$, and $\exp{\left(t H_{\textrm{\emph{Im}} \ q}\right)}$ is the Hamilton flow of $\textrm{\emph{Im}} \ q$ at time $t$.
\end{theorem}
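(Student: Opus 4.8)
The strategy is to transport the whole question to the Bargmann side via metaplectic FBI transforms, where for a quadratic Hamiltonian the propagator is completely explicit. First I would recall the exact quantum--classical correspondence for the quadratic evolution: conjugating $e^{-tq^w(x,D)}$ by metaplectic FBI transforms turns it into a generalized metaplectic operator between weighted Bargmann spaces of holomorphic functions $H_{\Phi_0}\to H_{\Phi_t}$, whose canonical relation is the graph of the FBI-transported (complex) Hamilton flow of $q$ at time $t$, and whose amplitude carries an extra decay governed by $\int_0^t \textrm{Re}\, q$ integrated along classical trajectories. Because $q$ is a quadratic form this is a matter of Gaussians and linear algebra only, with no pseudodifferential remainders: both the weights $\Phi_0,\Phi_t$ and the underlying complex linear canonical transformation are computed directly from the Hamilton map $F$ --- equivalently from the real symplectic flow $\exp(tH_{\textrm{Im}\, q})$ together with the contraction produced by $\textrm{Re}\, F$. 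A convenient reduction is to absorb this (explicit, but complex) canonical transformation into the choice of FBI phases, so that it suffices to compare $\mathcal{T}_\varphi\, e^{-tq^w}u_0$ near a fixed point with $\mathcal{T}_\psi u_0$ near another fixed point; the failure of the second phase $\psi$ to be metaplectic in directions transverse to $S$ is precisely what will generate regularization, whereas on the $S$-directions $\psi$ stays metaplectic.

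Second, and this is the technical heart, I would establish a sharp pointwise bound on the FBI side: for $z$ in a small open conic neighbourhood of $z_0 = \kappa^\flat_\varphi(x_0,\xi_0)$,
\[
\bigl| \mathcal{T}_\varphi\, e^{-tq^w(x,D)} u_0(z) \bigr|\, e^{-\Phi_t(z)} \ \lesssim \ \langle z \rangle^{N} e^{-M_t(z)} \sup_{w \in \Gamma(z)} \bigl| \mathcal{T}_\psi u_0(w) \bigr|\, e^{-\Phi_0(w)},
\]
where $\Gamma(z)$ is a small conic neighbourhood of $\kappa^\flat_\psi(y_0,\eta_0)$ with $(y_0,\eta_0) := \exp(-tH_{\textrm{Im}\, q})(x_0,\xi_0)$, and $M_t(z) \ge 0$ is comparable to the integral of $\textrm{Re}\, q$ over the trajectory segment of $\exp(\cdot\, H_{\textrm{Im}\, q})$ ending at the phase-space point over $z$; moreover the companion lower bound, with the same weights, holds whenever that trajectory stays inside $S$, since there $\textrm{Re}\, q$ vanishes, the flow is purely oscillatory, and the generalized metaplectic operator reduces to a genuine --- hence two-sidedly bounded, invertible --- metaplectic operator. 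Proving this estimate, and in particular disentangling the neutral directions, on which the dynamics is oscillatory and the bound must be a two-sided equivalence, from the dissipative directions, on which one has and wants only a one-sided gain, is the delicate part.

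Third, I would bring in the linear algebra of the singular space. From (\ref{definition of singular space}) one sees at once that $S$ lies in the radical of the real quadratic form $\textrm{Re}\, q$ (in particular $\textrm{Re}\, q|_S = 0$) and, by \cite{QuadraticOperators}, that $S$ is invariant under $\exp(tH_{\textrm{Im}\, q})$; the extra input I would isolate as a lemma is the dichotomy that $M_t(z) = 0$ if and only if the trajectory underlying $z$ lies in $S$, together with the quantitative statement that, $M_t$ being a fixed non-negative quadratic form in $z$, it satisfies $M_t(z) \gtrsim |z|^2$ whenever that trajectory stays in a closed cone avoiding $S$ --- this is where the full chain of $2n$ kernels in (\ref{definition of singular space}) is used, not just $\ker(\textrm{Re}\, F)$, and it is the quantitative smoothing behind the case $S = \{0\}$ of \cite{SubellipticEstimates}, here extended to general $S$. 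Observe that a quadratic gain $e^{-M_t(z)} \lesssim e^{-c|z|^2}$ is exactly what the endpoint $s = 1/2$ calls for, since $|z|^{1/s} = |z|^2$ there. Combining the two previous steps: if $(y_0,\eta_0) = \exp(-tH_{\textrm{Im}\, q})(x_0,\xi_0) \notin S$, then after shrinking cones the gain $e^{-M_t(z)}$ dominates the at most polynomial growth of $\mathcal{T}_\psi u_0$ valid for any tempered distribution, so $\bigl| \mathcal{T}_\varphi\, e^{-tq^w} u_0(z) \bigr| \lesssim e^{\Phi_t(z) - c|z|^2}$ near $z_0$ and $(x_0,\xi_0) \notin \textrm{WF}^{1/2}(e^{-tq^w} u_0)$; while if $(y_0,\eta_0) \in S$, then $M_t \equiv 0$ on the relevant cone, the bound is two-sided, and $(x_0,\xi_0) \in \textrm{WF}^{1/2}(e^{-tq^w} u_0)$ precisely when $(y_0,\eta_0) \in \textrm{WF}^{1/2}(u_0)$. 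Since $\exp(tH_{\textrm{Im}\, q})$ restricts to a bijection of $S$, these two statements are exactly the inclusions $\subseteq$ and $\supseteq$ in (\ref{main inclusion}).

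The step I expect to be the main obstacle is the sharp two-sided FBI estimate of the second paragraph in the presence of the non-semisimple Jordan structure that $F$ may have on $S$: there the relevant generator has purely imaginary eigenvalues but need not be diagonalizable, so the weights $\Phi_t$ run through a family of positive definite quadratic forms that are uniformly comparable but genuinely $t$-dependent, and the backward canonical transformation distorts conic neighbourhoods by polynomial factors; one must check that after this distortion the FBI transform of $u_0$ still detects exactly the same cone of $1/2$-Gelfand-Shilov singularities, so that nothing is spuriously created or lost along the neutral directions. A secondary but non-trivial point is obtaining the cone lower bound $M_t(z) \gtrsim |z|^2$ off $S$ from the vanishing of finitely many derivatives of $s \mapsto \textrm{Re}\, q(\exp(sH_{\textrm{Im}\, q})X)$ at $s = 0$, which is the precise mechanism through which the definition of the singular space feeds the regularization statement.
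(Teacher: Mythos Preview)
Your overall strategy matches the paper's: pass to the FBI side, recognize the conjugated propagator as a Gaussian integral operator, and identify the quadratic decay factor (your $M_t$, the paper's $\Phi-\Phi_t$ via the eikonal equation (\ref{Eikonal equation for Phi})) whose radical is exactly $\kappa_\varphi^\flat(S)$. Your cone lower bound $M_t\gtrsim |z|^2$ away from $S$ is the content of Proposition \ref{geometric_proposition} together with (\ref{characterization of clean intersection}), and your argument for the inclusion $\subset$ in (\ref{main inclusion}) is correct and essentially the paper's.

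The gap is in the reverse inclusion $\supset$. You invoke a ``companion lower bound'' on the FBI side, arguing that on $S$ the operator ``reduces to a genuine --- hence two-sidedly bounded, invertible --- metaplectic operator.'' But a pointwise lower bound on $|\mathcal{T}_\varphi e^{-tq^w}u_0(z)|$ cannot be read off from a kernel estimate (the integral may cancel), and there is no well-defined way to ``restrict the operator to $S$'' on the Bargmann side since $S$ does not split off as a tensor factor of $H_\Phi(\C^n)$. The paper instead writes down an explicit metaplectic FIO $\widetilde{G}$ quantizing $\tilde{\kappa}_t^{-1}$ (bounded $H^{-\infty}_{\Phi_t}\to H^{-\infty}_\Phi$), checks via uniqueness of the Bergman form that $\widetilde{G}\,\widetilde{G}(t)=I$ on $H^{-\infty}_\Phi(\C^n)$, and applies the already-established one-sided inclusion to $\widetilde{G}$ (Lemma \ref{main lemma to prove exact propagation}); combined with the invariance $\tilde{\kappa}_t(\Lambda_\Phi\cap\Lambda_{\Phi_t})=\Lambda_\Phi\cap\Lambda_{\Phi_t}$ (equivalently $\kappa_t(S)=S$), this yields equality. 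So ``invertible'' is the right keyword, but it must enter as an operator identity plus a second application of the one-sided estimate, not as a direct pointwise two-sided bound.

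A secondary remark: the obstacles you anticipate --- non-semisimple Jordan structure of $F|_S$, $t$-dependent weights distorting cones --- do not arise in the paper's execution. The paper works throughout with the \emph{fixed} weight $\Phi$, treating $\widetilde{G}(t)$ as a map $H^{-\infty}_\Phi\to H^{-\infty}_\Phi$ via the inclusion $H^{-\infty}_{\Phi_t}\hookrightarrow H^{-\infty}_\Phi$, and obtains the kernel bound from the abstract positivity machinery of \cite{ComplexFIOs} (Proposition \ref{main imported proposition}) rather than by tracking the flow explicitly; likewise the identification of $\textrm{Rad}(\Phi-\Phi_t)$ with $\kappa_\varphi^\flat(S)$ goes through the dynamical characterization (\ref{dynamical interpretation of singular space}) of $S$, without ever decomposing $F$.
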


\begin{remark} 
In \cite{GaborSingularities} it was shown that
\begin{align} \label{Gabor inclusion}
    \textrm{WF}_G(e^{-tq^w(x,D)} u_0) \subset \textrm{exp}(tH_{\textrm{Im} \ q})(\textrm{WF}_G(u_0) \cap S), \ \ \ \forall u_0 \in \mathcal{S}'(\R^n), \ \forall t>0,
\end{align}
and that equality need not hold for any $t>0$. In \cite{ExponentialSingularities}, it was established that for $s>1/2$, there is the inclusion
\begin{align} \label{s-Gelfand-Shilov inclusion}
    \textrm{WF}^{s}(e^{-tq^w(x,D)} u_0) \subset \exp{(t H_{\textrm{Im} \ q})(\textrm{WF}^{s}(u_0) \cap S)} \ \ \ \forall u_0 \in \mathcal{S}'(\R^n), \ \forall t>0.
\end{align}
In contrast to these results, Theorem \ref{main theorem of the paper} shows that the propagator $e^{-tq^w(x,D)}$ never regularizes any of the $1/2$-Gelfand-Shilov singularities of $u_0$ that lie within $S$. Thus it is possible to recover $\textrm{WF}^{1/2}(u_0) \cap S$ from knowledge of $\textrm{WF}^{1/2}(e^{-tq^w(x,D)}u_0)$ for some $t>0$. So far as we are aware, it is not known if the inclusion in (\ref{s-Gelfand-Shilov inclusion}) is in general strict for $s>1/2$. It would be of interest to determine the range of $s \ge 1/2$ for which equality holds in (\ref{s-Gelfand-Shilov inclusion}) for all $u_0 \in \mathcal{S}'(\R^n)$ and $t>0$.
\end{remark}

Our approach to the proof of Theorem \ref{main theorem of the paper} is based entirely on FBI transform techniques. Let $\varphi(z,y)$, $(z,y) \in \C^{2n}$, be a metaplectic FBI phase function with associated plurisubharmonic weight $\Phi(z)$, $z \in \C^n$, and complex linear canonical transformation $\kappa_\varphi: \C^{2n} \rightarrow \C^{2n}$. Our strategy is to study the conjugated propagator
\begin{align} \label{conjugated propagator in intro}
    \mathcal{T}_\varphi \circ e^{-tq^w(x,D)} \circ \mathcal{T}_\varphi^*, \ \ \ t \ge 0,
\end{align}
acting on the extended Bargmann space $H^{-\infty}_\Phi(\C^n)$, which is the image of $\mathcal{S}'(\R^n)$ by $\mathcal{T}_\varphi$. The extended Bargmann space $H^{-\infty}_\Phi(\C^n)$ is the space of all $u \in \textrm{Hol}(\C^n)$ such that
\begin{align*}
    \int_{\C^n} \langle z \rangle^{2s} \abs{u(z)}^2 e^{-2\Phi(z)} \, L(dz) < \infty,
\end{align*}
for some $s \in \R$. See Section 2 below for a full discussion of the space $H^{-\infty}_\Phi(\C^n)$ and related exponentially weighted spaces of entire functions. By Egorov's theorem, the operator (\ref{conjugated propagator in intro}) is the evolution semigroup generated by the complex Weyl differential operator $\tilde{q}^w(z,D_z)$, where $\tilde{q} = q \circ \kappa_\varphi^{-1}$. We note that this approach differs significantly from that of the works \cite{GaborSingularities}, \cite{PolynomialSingularities}, \cite{ExponentialSingularities}, and \cite{time_dependent}, which rely heavily on representations of the Schwartz kernel of $e^{-tq^w(x,D)}$ as a Gaussian oscillatory integral in the sense of \cite{GeneralizedMehler}. In the present work, we use an elementary geometrical optics construction to show that the semigroup $e^{-t\tilde{q}^w(z,D_z)}$ is a Fourier integral operator in the complex domain in the sense of Sj\"{o}strand \cite{AnalyticMicrolocal_Analysis}. Motivated by recent results concerning Bergman representations of metaplectic Fourier integral operators in the work \cite{ComplexFIOs}, we establish an integral representation for the semigroup $e^{-t\tilde{q}^w(z,D)}$ on the FBI transform side of the form
\begin{align} \label{Bergman_form_in_intro}
	e^{-t\tilde{q}^w(z,D)} u(z) = \hat{a}(t) \int_{\C^n} e^{2 \Psi_t(z,\overline{w})} u(w) e^{-2\Phi(w)} \, L(dw), \ \ u \in H_\Phi(\C^n), \ \ t \ge 0,
\end{align}
where $\hat{a}(t) \in \C$ depends analytically on $t$ and $\Psi_t(\cdot, \cdot)$ is a holomorphic quadratic form on $\C^{2n}$ whose coefficients depend analytically on $t$. Here we denote the Lebesgue measure on $\C^n = \C^n_z$ by $L(dz)$ and $H_\Phi(\C^n)$ is the \emph{Bargmann space}
\begin{align*}
	H_\Phi(\C^n) = L^2(\C^n, e^{-2\Phi(z)} \, L(dz)) \cap \textrm{Hol}(\C^n),
\end{align*}
which is the unitary image of $L^2(\R^n)$ under the FBI transform $\mathcal{T}_\varphi$.
Writing (\ref{Bergman_form_in_intro}) as the contour integral
\begin{align}
	e^{-t\tilde{q}^w(z,D)} u(z) = a(t) \iint_{\Gamma} e^{2 \Psi_t(z,\theta)-2\Psi(w,\theta)} u(w) \, dw \wedge d\theta, \ \ u\in H_\Phi(\C^n),
\end{align}
where
\begin{align}
	\Gamma = \set{(w,\theta)}{\theta = \overline{w}}
\end{align}
is the anti-diagonal in $\C^{2n}$, $a(t) = (i/2)^n \hat{a}(t)$, and $\Psi(\cdot, \cdot)$ is the polarization of $\Phi$, i.e. the unique holomorphic quadratic form on $\C^{2n}$ such that $\Psi(z,\overline{z}) = \Phi(z)$ for all $z \in \C^n$, we find that $e^{-t\tilde{q}^w(z,D)}$ is a metaplectic Fourier integral operator in the complex domain whose phase function generates the graph of the Hamilton flow $\tilde{\kappa_t} = \exp{(tH_{-i\tilde{q}})}: \C^{2n} \rightarrow \C^{2n}$ of $-i\tilde{q}$ at time $t$ for each $t \ge 0$. In particular, for every $t \ge 0$, the flow $\tilde{\kappa}_t$ is positive relative to the maximally totally real subspace
\begin{align} \label{definition of I lagrangian}
    \Lambda_\Phi = \set{\left(z, \frac{2}{i}\p_z \Phi(z) \right)}{z \in \C^n}
\end{align}
of $\C^{2n}$ in the sense that
\begin{align*}
    \frac{1}{i} \left(\sigma(\tilde{\kappa}_t(X), \iota_{\Lambda_\Phi}(\tilde{\kappa}_t(X))) - \sigma(X, \iota_{\Lambda_\Phi} X)\right) \ge 0, \ \ \ X \in \C^{2n},
\end{align*}
where $\iota_{\Lambda_{\Phi}}$ is the unique antilinear involution of $\C^{2n}$ fixing $\Lambda_\Phi$. By using some recent results from \cite{ComplexFIOs} concerning complex canonical transformations of $\C^{2n}$ that are positive relative to a pair of maximally totally real subspaces of the form {(\ref{definition of I lagrangian}) for some strictly plurisubharmonic quadratic form $\Phi$ on $\C^{2n}$}, we show that it is possible to find $\hat{a}(t)$ and $\Psi_t(\cdot, \cdot)$ so that (\ref{Bergman_form_in_intro}) is valid for all non-negative times $t \ge 0$. In particular, the Bergman representation (\ref{Bergman_form_in_intro}) provides an alternative to H\"{o}rmander's generalized Mehler formula \cite{GeneralizedMehler} for representing the semigroup $e^{-tq^w(x,D)}$. We feel that this result may be of independent interest, and we plan to explore additional applications in the future.

Having established the representation (\ref{Bergman_form_in_intro}), we can invoke some generalities concerning how metaplectic Fourier integral operators associated to complex canonical transformations of $\C^{2n}$ propagate and/or regularize $1/2$-Gelfand-Shilov singularities. Given $u \in H^{-\infty}_\Phi(\C^n)$, we define the \emph{$1/2$-Gelfand-Shilov wavefront set of $u$ relative to $\Phi$} as the complement in $\C^n \backslash \{0\}$ of all points $z_0$ for which there exist $C,c>0$ such that
\begin{align*}
    \abs{u(z)} \le C e^{\Phi(z)-c\abs{z}^2}
\end{align*}
for all $z$ in some open conic neighborhood of $z_0$ in $\C^n \backslash \{0\}$. Thus, if $u = \mathcal{T}_\varphi v$ where $v \in \mathcal{S}'(\R^n)$, then
\begin{align*}
    \textrm{WF}^{1/2}_\Phi(u) = \kappa^\flat_\varphi(\textrm{WF}^{1/2}(v)),
\end{align*}
where $\kappa^\flat_\varphi$ is as in (\ref{def_kappa_flat_intro}). Given a non-zero metaplectic Fourier integral operator $G$ acting on $H^{-\infty}_\Phi(\C^n)$ whose underlying complex linear canonical transformation $\kappa: \C^{2n} \rightarrow \C^{2n}$ is positive relative to $\Lambda_\Phi$ and such that $\Lambda_\Phi \cap \kappa(\Lambda_\Phi)$ is invariant under $\kappa$, one has the following general relationship between $\textrm{WF}^{1/2}_{\Phi}(Gu)$ and $\textrm{WF}^{1/2}_\Phi(u)$:
\begin{align} \label{wavefront relation in intro}
    \textrm{WF}^{1/2}_\Phi(Gu) = \kappa^\flat(\textrm{WF}^{1/2}_\Phi(u)) \cap \pi_1(\Lambda_\Phi \cap \kappa(\Lambda_\Phi))
\end{align}
where
\begin{align*}
    \kappa^\flat = \pi_1 \circ \kappa \circ (\pi_1|_{\Lambda_\Phi})^{-1}.
\end{align*}
The proof of (\ref{wavefront relation in intro}), which uses the result from \cite{ComplexFIOs} that every metaplectic Fourier integral operator possesses a unique Bergman form, is given in Section 5 below. Using (\ref{wavefront relation in intro}) with $G = e^{-t\tilde{q}^w(z,D_z)}$, $\kappa = \tilde{\kappa}_t$, and the general geometric relationship
\begin{align} \label{geometric relationship for singular space in intro}
\Lambda_\Phi \cap \tilde{\kappa}_t(\Lambda_\Phi) = \kappa_\varphi(S), \ \ \ t>0,
\end{align}
which we establish in Section 6, will complete the proof of Theorem \ref{main theorem of the paper}.

The plan for this paper is as follows. In Section 2, we review background material on metaplectic FBI transforms and exponentially weighted spaces of entire functions. In Section 3, we recall the definition of the $1/2$-Gelfand-Shilov wavefront set and state some basic properties. In Section 4, we discuss metaplectic Fourier integral operators acting on exponentially weighted spaces of entire functions and recall some results from \cite{ComplexFIOs} concerning Bergman representations of such operators. In Section 5, we prove some general results concerning how metaplectic Fourier integral operators in the complex domain move and/or regularize global analytic singularities. Finally, in Section 6, we establish the Bergman representation (\ref{Bergman_form_in_intro}) for the evolution semigroup $e^{-tq^w(x,D)}$ on the FBI transform side and finish the proof of Theorem \ref{main theorem of the paper}.

\vspace{5mm}

{\bf{Acknowledgements}}. Part of this material is based upon work supported by the National Science Foundation under Grant No. DMS-1440140 while the author was in residence at the Mathematics Sciences Research Institute in Berkeley, CA, during the months of November and December of 2019. The author would also like to express his sincerest gratitude to M. Hitrik for offering numerous helpful suggestions and giving feedback on early drafts of this manuscript.

\vspace{5mm}

{\bf{Notation}}
\begin{itemize}
\item \emph{Constants}. $C$ and $c$ stand for positive constants; $C$ may increase from line to line if necessary, while $c$ may decrease from line to line if necessary.
\item \emph{Dot Product}. If $z = (z_1, \ldots, z_n) \in \C^n$ and $w = (w_1, \ldots, w_n) \in \C^n$,
\begin{align*}
    z \cdot w = z_1w_1 + \cdots + z_n \cdot w_n.
\end{align*}
\item $\langle z \rangle = (1+\abs{z}^2)^{1/2}$ denotes the Japanese bracket of $z \in \C^n$.
\item $\pi_1: \C^{2n} \rightarrow \C^n$ is the projection onto the first factor $(z,\zeta) \mapsto z$.
\item $J \in M_{2n \times 2n}(\R)$ is the standard symplectic matrix
\begin{align*}
    J = \begin{pmatrix} 0 & I \\ -I & 0 \end{pmatrix}.
\end{align*}
\item {\emph{Conic Subsets}}. If $X$ is a real vector space with norm $\norm{\cdot}$ and $V, W$ are conic subsets of $X \backslash \{0\}$, we use the notation
\begin{align*}
    V \subset \subset W
\end{align*}
to mean that $V \cap \set{x \in X}{\norm{x}=1}$ is compactly contained in $W \cap \set{x \in X}{\norm{x}=1}$.

\item \emph{Radicals}. If $P$ is a non-negative quadratic form on $\R^N$, then the radical of $P$, denoted $\textrm{Rad}(P)$, is the zero set of $P$, 
\begin{align*}
	\textrm{Rad}(P) = \set{X \in \R^N}{P(X) = 0}.
\end{align*}

\item \emph{Derivatives.} On $\C^n$ with coordinates $z = (z_1, \ldots, z_n)$,
\begin{align*}
    \p_{z_j} = \frac{1}{2} \left(\frac{\p}{\p \textrm{Re} \ z_j} - i \frac{\p}{\p \textrm{Im} \ z_j} \right), \ \ \ j = 1, \ldots, n
\end{align*}
and
\begin{align*}
    \p_{\overline{z}_j} = \frac{1}{2} \left(\frac{\p}{\p \textrm{Re} \ z_j} + i \frac{\p}{\p \textrm{Im} \ z_j} \right), \ \ \ j=1,\ldots, n.
\end{align*}

For a sufficiently differentiable function $f: \C^n \rightarrow \C$,
\begin{align*}
    \p_zf = \begin{pmatrix} \frac{\p f}{\p z_1} \\ \vdots \\ \frac{\p f}{\p z_n} \end{pmatrix}, \ \ \  \p_{\overline{z}} f = \begin{pmatrix} \frac{\p f}{\p \overline{z}_1} \\ \vdots \\ \frac{\p f}{\p \overline{z}_n} \end{pmatrix}
\end{align*}
and
\begin{align*}
    \p^2_{zz}f = \left(\frac{\p^2 f}{\p z_j \p z_k} \right)_{1 \le j, k \le n}, \ \ \ \p^2_{z\overline{z}}f = \left(\frac{\p^2 f}{\p z_j \p \overline{z}_k} \right)_{1 \le j, k \le n}, \\
    \p^2_{\overline{z}z}f = \left(\frac{\p^2 f}{\p \overline{z}_j \p z_k} \right)_{1 \le j, k \le n,}, \ \ \ \p^2_{\overline{z}\overline{z}}f = \left(\frac{\p^2 f}{\p \overline{z}_j \p \overline{z}_k} \right)_{1 \le j, k \le n}.
\end{align*}

\item \emph{Hamiltonian Dynamics in the Complex Domain}. Write $\C^{2n} = \C^n_z \times \C^n_\zeta$ and let $\sigma = d\zeta \wedge dz$ be the standard complex symplectic form on $\C^{2n}$. Given $f \in \textrm{Hol}(\C^{2n})$, the complex Hamilton vector field of $f$ is denoted
\begin{align*}
    H_f = \p_\zeta f \cdot \p_z - \p_z f \cdot \p_\zeta,
\end{align*}
This is a complex vector field on $\C^{2n}$ of type $(1,0)$, which we identify with $(\p_\zeta f, -\p_z f) \in \C^{2n}$. We note that $H_f$ is the unique complex vector field on $\C^{2n}$ of type $(1,0)$ such that $\sigma(t, H_f) = df(t)$ for all complex vector fields $t$ on $\C^{2n}$ of type $(1,0)$. In this paper we shall only ever be interested in the case where $f$ is a holomorphic quadratic form. In this case the Hamilton flow of $f$ is denoted by $\exp{(tH_f)}$, $t \in \R$, and
defined as follows: for $(z_0, \zeta_0) \in \C^{2n}$ and $t \in \R$, 
\begin{align*}
    (z(t),\zeta(t)) = \exp{(tH_f)}(z_0, \zeta_0)
\end{align*}
if and only if
\begin{align*}
\begin{cases}
z'(t) = \p_\zeta f(z(t), \zeta(t)) \\
\zeta'(t) = -\p_z f(z(t), \zeta(t)), \\
z(0) = z_0, \ \zeta(0) = \zeta_0.
\end{cases}
\end{align*}

\item \emph{Schwartz Functions and Tempered Distributions}. $\mathcal{S}(\R^n)$ denotes class of Schwartz functions on $\R^n$ equipped with its usual locally convex Frech\'{e}t topology; $\mathcal{S}'(\R^n)$ is the space of tempered distributions on $\R^n$ endowed with the weak* topology. The distributional pairing of $u \in \mathcal{S}'(\R^n)$ and $f \in \mathcal{S}(\R^n)$ is denoted $\langle u, f \rangle$.

\end{itemize}


\section{Review of FBI Tools and Exponentially Weighted Spaces of Entire Functions}

In this section we review basic definitions and facts concerning metaplectic Fourier-Bros-Iagolnitzer (FBI) transforms on $\R^n$. Standard references for this material include \cite{Minicourse}, Chapter 13 of \cite{SemiclassicalAnalysis}, and \cite{GlobalILagrangians}.

Let $\varphi = \varphi(z,y)$ be a holomorphic quadratic form on $\C^{2n} = \C^n_z \times \C^n_y$. Write
\begin{align*}
    \varphi(z,y) = \frac{1}{2} A z \cdot z + B z \cdot y + \frac{1}{2} D y \cdot y, \ \ \ (z,y) \in \C^n \times \R^n,
\end{align*}
where $A, B, D \in M_{n \times n}(\C)$ with $A = A^T$ and $D = D^T$. If $\det{B} \neq 0$ and $\textrm{Im} \ D$ is positive-definite, we say that $\varphi$ is an \emph{FBI phase function} or \emph{metaplectic FBI phase function}. If $\varphi$ is an FBI phase function, the FBI transform associated to $\varphi$ is the linear transformation $\mathcal{T}_\varphi: \mathcal{S}'(\R^n) \rightarrow \textrm{Hol}(\C^n)$
defined by
\begin{align} \label{definition of FBI transform}
    \mathcal{T}_\varphi u(z) = c_\varphi \int_{\R^n} e^{i \varphi(z,y)} u(y) \, dy, \ \ \ u \in \mathcal{S}'(\R^n),
\end{align}
where
\begin{align} \label{choice of C to make FBI transform unitary}
    c_\varphi = 2^{-n/2} \pi^{-3n/4} (\det{\textrm{Im} \ D})^{-1/4} \abs{\det{B}},
\end{align}
and the integral (\ref{definition of FBI transform}) is interpreted in the sense of distributions.

Let $\varphi$ be an FBI phase function. To describe the range of the FBI transform $\mathcal{T}_\varphi$, we introduce the real-valued quadratic form
\begin{align} \label{quadratic form associated to an FBI phase}
    \Phi(z) = \sup_{y \in \R^n} (-\textrm{Im} \ \varphi(z,y)), \ \ \ z \in \C^n.
\end{align}
Since $\textrm{Im} \ D$ is positive-definite, this supremum is really a maximum and we may write
\begin{align*}
    \Phi(z) = -\textrm{Im} \ \varphi(z,y(z))
\end{align*}
where $y(z) \in \R^n$ is an $\R$-linear function of $z \in \C^n$. Because $\Phi$ is equal to the maximum of the family of pluriharmonic functions
\begin{align*}
    \C^n \ni z \mapsto -\textrm{Im} \ \varphi(z,y) \in \R, \ \ \ y \in \R^n,
\end{align*}
the form $\Phi$ is itself plurisubharmonic. In fact, $\Phi$ is strictly plurisubharmonic, i.e. the Levi matrix $\p^2_{\overline{z} z} \Phi$ is Hermitian positive-definite. We refer to Proposition 1.3.2 of \cite{Minicourse} for a proof. In the sequel, if $\varphi$ is an FBI phase function, then we shall we refer to $\Phi$ given by (\ref{quadratic form associated to an FBI phase}) as the strictly plurisubharmonic weight associated to $\varphi$.

For $s \in \R$, let
\begin{align}
	L^2_{\Phi, s}(\C^n) = L^2(\C^n, \langle z \rangle^{2s} e^{-2\Phi(z)} \, L(dz)),
\end{align}
equipped with the natural inner product,
\begin{align}
	(u_1, u_2)_s = \int_{\C^n} u_1(z) \overline{u_2(z)} \langle z \rangle^{2s} e^{-2 \Phi(z)} \, L(dz), \ \ u_1, u_2 \in L^2_{\Phi, s}(\C^n),
\end{align}
and associated norm
\begin{align} \label{s_norm}
	\norm{u}^2_s = \int_{\C^n} \abs{u(z)}^2 \langle z \rangle^{2s} e^{-2\Phi(z)} \, L(dz), \ \ u \in L^2_{\Phi, s}(\C^n).
\end{align}
For $s \in \R$, let
\begin{align}
	H^s_\Phi(\C^n) = L^2_{\Phi, s}(\C^n) \cap \textrm{Hol}(\C^n)
\end{align}
be the closed linear subspace of entire functions in $L^2_{\Phi,s}(\C^n)$. By convention, when $s = 0$, we write $H_\Phi(\C^n)$ in place of $H^0_\Phi(\C^n)$, and we write $(\cdot, \cdot)$ and $\norm{\cdot}$ in place of $(\cdot, \cdot)_0$ and $\norm{\cdot}_0$ respectively. In the literature, the space $H_{\Phi}(\C^n)$ is known as the \emph{Bargmann} or \emph{Bargmann-Fock} space of entire functions on $\C^n$. If $s_1 \le s_2$, then $H^{s_2}_\Phi(\C^n) \subset H^{s_1}_\Phi(\C^n)$, and the natural inclusion map $H^{s_2}_\Phi(\C^n) \hookrightarrow H^{s_1}_\Phi(\C^n)$ is bounded. Thus, if we let
\begin{align} \label{extended_Bargmann_space}
	H^{-\infty}_\Phi(\C^n) = \bigcup_{s \in \R} H^s_\Phi(\C^n),
\end{align}
we obtain an inductive system of Hilbert spaces $(H^{-\infty}_\Phi(\C^n), \{H^s_\Phi(\C^n)\}_{s \in \R})$ (see \cite{Conway} Chapter IV Section 5). We equip $H^{-\infty}_\Phi(\C^n)$ with the corresponding inductive limit topology (see \cite{Conway} Chapter IV Proposition 5.3 and Definition 5.4). We refer to $H^{-\infty}_\Phi(\C^n)$ as the \emph{extended Bargmann space}. We also introduce the space
\begin{align}
	H^\infty_\Phi(\C^n) = \bigcap_{s \in \R} H^s_\Phi(\C^n),
\end{align}
equipped with the Frech\'{e}t space topology induced by the family of norms $\{\norm{\cdot}_s\}_{s \in \R}$. For every $s \in \R$, we have continuous inclusions
\begin{align}
	H^\infty_\Phi(\C^n) \hookrightarrow H^s_\Phi(\C^n) \hookrightarrow H^{-\infty}_\Phi(\C^n).
\end{align}
Thanks to the mean-value property of holomorphic functions, we have the following lemma characterizing functions in $H^s_\Phi(\C^n)$ as holomorphic functions on $\C^n$ obeying suitable weighted $L^\infty$-estimates.

\begin{lemma} \label{growth_of_entire_functions}
	For any $s \in \R$ and $\epsilon>0$, there is $C>0$ such that
	\begin{align}
		\norm{u}_{L^2_{\Phi,s}(\C^n)} \le C \norm{\langle z \rangle^{s+n+\epsilon} u(z) e^{-\Phi(z)}}_{L^\infty(\C^n)}
	\end{align}
	for all measurable $u$ on $\C^n$. For any $N \in \R$, there is $C>0$ such that
	\begin{align} \label{second_bound_in_lemma}
		\norm{\langle z \rangle^{n+N} u(z) e^{-\Phi(z)}}_{L^\infty(\C^n)} \le C \norm{u}_{L^2_{\Phi, N+2n} (\C^n)}
	\end{align}
	for all $u \in \textrm{Hol}(\C^n)$. Consequently, for $u \in \textrm{Hol}(\C^n)$,
	\begin{align}
	u \in H^\infty_\Phi(\C^n) \iff \forall N\in \R \ \exists C>0 \ \forall z \in \C^n: \abs{u(z)} \le C \langle z \rangle^N e^{\Phi(z)}
	\end{align}
	and
	\begin{align}
		u \in H^{-\infty}_\Phi(\C^n) \iff \exists N\in \R \ \exists C>0 \ \forall z \in \C^n: \abs{u(z)} \le C \langle z \rangle^N e^{\Phi(z)}.
	\end{align}
\end{lemma}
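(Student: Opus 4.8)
The plan is to establish the two weighted estimates and then read off the two equivalences as corollaries by substituting one bound into the other and bookkeeping exponents. The first inequality, relating the $L^2_{\Phi,s}$-norm to a weighted sup-norm, needs no holomorphy: if $M := \norm{\langle z\rangle^{s+n+\epsilon}u(z)e^{-\Phi(z)}}_{L^\infty(\C^n)}$ is finite then pointwise $\abs{u(z)}^2\langle z\rangle^{2s}e^{-2\Phi(z)} \le M^2\langle z\rangle^{-2n-2\epsilon}$, and integrating over $\C^n$ gives the claim with $C = \bigl(\int_{\C^n}\langle z\rangle^{-2n-2\epsilon}\,L(dz)\bigr)^{1/2}$, the integral being finite because $2n+2\epsilon > \dim_\R\C^n = 2n$.

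The second inequality is where the mean-value property enters. I would first introduce the holomorphic polarization $\Psi$ of $\Phi$, i.e.\ the holomorphic quadratic form on $\C^{2n}$ with $\Psi(z,\overline z)=\Phi(z)$; reality of $\Phi$ makes the coefficient matrix of $\Psi$ Hermitian, so that $\overline{\Psi(z,\theta)} = \Psi(\overline\theta,\overline z)$. For fixed $z_0 \in \C^n$ I would then work with the entire function $F(w) = u(w)\,e^{-2\Psi(w,\overline{z_0})+\Phi(z_0)}$. Expanding $\Phi(w-z_0) = \Psi(w-z_0,\overline{w-z_0})$ by bilinearity and using the conjugation identity above yields the polarization identity $-4\,\textrm{Re}\,\Psi(w,\overline{z_0}) + 2\Phi(z_0) = 2\Phi(w-z_0) - 2\Phi(w)$, so that $\abs{F(z_0)}^2 = \abs{u(z_0)}^2e^{-2\Phi(z_0)}$ while $\abs{F(w)}^2 = \abs{u(w)}^2e^{-2\Phi(w)}e^{2\Phi(w-z_0)}$. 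The mean-value property of the holomorphic function $F$ on the unit ball $B(z_0,1) \subset \C^n$, followed by Cauchy--Schwarz, bounds $\abs{F(z_0)}^2$ by the average of $\abs{F(w)}^2$ over that ball; on $B(z_0,1)$ one has $e^{2\Phi(w-z_0)} \le e^{2C_0}$, where $C_0$ is any constant with $\abs{\Phi(h)} \le C_0\abs{h}^2$ for all $h$ (such a $C_0$ exists since $\Phi$ is a quadratic form), and $\langle w\rangle \asymp \langle z_0\rangle$. Inserting these comparisons and recognizing that the resulting integral is dominated by $\norm{u}_{N+2n}^2$ gives $\langle z_0\rangle^{N+2n}\abs{u(z_0)}e^{-\Phi(z_0)} \le C\norm{u}_{N+2n}$, which, since $\langle z_0\rangle \ge 1$ and $n \ge 0$, is slightly stronger than the stated bound (\ref{second_bound_in_lemma}).

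For the equivalences: if $u \in H^\infty_\Phi(\C^n)$, then applying the second bound with the exponent free shows $\abs{u(z)} \le C\langle z\rangle^{-n-N}e^{\Phi(z)}$ for every $N \in \R$, i.e.\ a polynomial bound of every order; conversely, such bounds of every order make the right-hand side of the first bound finite for every $s$---take $\epsilon = 1$ there and the exponent equal to $-s-n-1$---so $u$ lies in every $H^s_\Phi(\C^n)$. The characterization of $H^{-\infty}_\Phi(\C^n)$ is the same statement with ``for every $s$'' (equivalently ``for every order'') replaced by ``for some $s$''.

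I expect the only genuine subtlety to be in the second inequality, arising from the fact that $\Phi$, although strictly plurisubharmonic, need not be a nonnegative quadratic form: the factor $e^{2\Phi(w-z_0)}$ produced by the mean-value step therefore cannot simply be discarded but must be controlled on the unit ball via $\abs{w-z_0} \le 1$. Apart from that, the one thing to watch is keeping the weight exponents consistent across the two displayed inequalities and the two equivalences.
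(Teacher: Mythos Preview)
Your argument is correct. The first inequality and the two equivalences match the paper's proof essentially verbatim. For the second inequality you take a genuinely different route: the paper applies the mean-value theorem directly to $u$ on balls of shrinking radius $\langle z\rangle^{-1}$, exploiting the fact that $\abs{\Phi(w)-\Phi(z)}\le C$ whenever $\abs{z-w}\le\langle z\rangle^{-1}$ (since $\Phi$ is quadratic), and then recovers the weight via Peetre's inequality and Cauchy--Schwarz, picking up a factor $\langle z\rangle^{-n}$ from the volume of the small ball. Your approach instead keeps the ball radius equal to $1$ but twists $u$ by the holomorphic Gaussian $e^{-2\Psi(\cdot,\overline{z_0})}$ before averaging, so that the polarization identity converts the weight exactly into $e^{2\Phi(w-z_0)}$, which is bounded on the unit ball. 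Your method avoids the shrinking-ball bookkeeping and, as you observe, actually yields the sharper estimate $\langle z_0\rangle^{N+2n}\abs{u(z_0)}e^{-\Phi(z_0)}\le C\norm{u}_{N+2n}$; the paper's version loses a factor of $\langle z\rangle^{n}$ to the ball volume. The trade-off is that your argument requires introducing the polarization $\Psi$ and its conjugation symmetry, whereas the paper's argument is more elementary in that it never leaves real-variable estimates on $\Phi$.
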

\begin{proof}
	Let $s \in \R$ and $\epsilon>0$ be arbitrary. For any $N \in \R$ and measurable $u$ on $\C^n$,
	\begin{align}
		\norm{u}_{L^2_{\Phi,s}(\C^n)} \le \left(\int_{\C^n} \langle z \rangle^{2(s-N)} \, L(dz) \right)^{1/2} \norm{\langle z \rangle^N u(z) e^{-\Phi(z)}}_{L^\infty(\C^n)}.
	\end{align}
	Choosing $N = s + n + \epsilon$ gives
	\begin{align}
		\norm{u}_{L^2_{\Phi,s}(\C^n)} \le C \norm{\langle z \rangle^{s+n+\epsilon} u(z) e^{-\Phi(z)}}_{L^\infty(\C^n)},
	\end{align}
	where the constant $C>0$ depends only on $s$, $n$, and $\epsilon$. To prove the second claim, let $N \in \R$ and $u \in \textrm{Hol}(\C^n)$ be arbitrary. By the mean-value theorem,
	\begin{align}
		u(z) = \frac{1}{c_{2n}} \langle z \rangle^{2n} \int_{\abs{z-w} \le \langle z \rangle^{-1}} u(w) \, L(dw), \ \ z \in \C^n,
	\end{align}
	where $c_{2n}$ is the volume of the unit ball in $\C^n \cong \R^{2n}$. Observe that there is $C>0$ such that
	\begin{align}
		\abs{\Phi(w)-\Phi(z)} \le C
	\end{align}
	whenever $z, w \in \C^n$ are such that $\abs{z-w} \le \langle z \rangle^{-1}$. Thus 
	\begin{align}
		\begin{split}
		\langle z \rangle^N \abs{u(z)} e^{-\Phi(z)} &\le C \int_{\abs{z-w} \le \langle z \rangle^{-1}} \langle z \rangle^{N+2n} \abs{u(w)} e^{-\Phi(w)} \, L(dw) \\
		&\le C \int_{\abs{z-w} \le \langle z \rangle^{-1}} \langle z -w \rangle^{\abs{N+2n}} \langle w \rangle^{N+2n} \abs{u(w)} e^{-\Phi(w)} \, L(dw) \\
		&\le C \int_{\abs{z-w} \le \langle z \rangle^{-1}} \langle w \rangle^{N+2n} \abs{u(w)} e^{-\Phi(w)} \, L(dw) \\
		&\le C \langle z \rangle^{-n} \norm{u}_{H^{N+2n}_\Phi}(\C^n)
		\end{split}
	\end{align}
	for all $z \in \C^n$, where the constant $C>0$ does not depend on $u$. The bound (\ref{second_bound_in_lemma}) follows.
\end{proof}

Next, we state a well-known proposition characterizing the range of a metaplectic FBI transform $\mathcal{T}_\varphi$ in terms of exponentially weighted spaces of entire functions on $\C^n$.

\begin{proposition} \label{image_of_FBI_transform}
	Let $\varphi$ be an FBI phase function with associated FBI transform $\mathcal{T}_\varphi$ and strictly plurisubharmonic weight $\Phi$. Then
	\begin{align}
		\mathcal{T}_\varphi: L^2(\R^n) \rightarrow H_\Phi(\C^n)
	\end{align}
	is a unitary transformation. Furthermore, $\mathcal{T}_\varphi$ is bijective
	\begin{align}
		\mathcal{S}(\R^n) \rightarrow H^\infty_\Phi(\C^n) \ \ \textrm{and} \ \ \mathcal{S}'(\R^n) \rightarrow H^{-\infty}_\Phi(\C^n).
	\end{align}
\end{proposition}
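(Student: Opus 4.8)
The plan is to establish the three assertions in sequence, the heart of the matter being the $L^2$-unitarity; the other two bijections then follow by soft arguments. This is classical material (see \cite{Minicourse}, Chapter 13 of \cite{SemiclassicalAnalysis}, or \cite{GlobalILagrangians}), and I would proceed as follows. For the unitarity, first I would check that for $u \in \mathcal{S}(\R^n)$ the integral (\ref{definition of FBI transform}) converges absolutely and defines an entire function of $z$ (differentiation under the integral sign together with Morera's theorem), and that completing the square in $y$ — legitimate since $\textrm{Im} \ D$ is positive-definite — gives a pointwise bound $\abs{e^{i\varphi(z,y)}} \le C e^{\Phi(z) - \abs{y-y(z)}^2/C}$, so that $\mathcal{T}_\varphi u \in H_\Phi(\C^n)$ with norm controlled by Schwartz seminorms of $u$. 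The crucial point is the Plancherel identity $\norm{\mathcal{T}_\varphi u}_{H_\Phi} = \norm{u}_{L^2(\R^n)}$ for $u \in \mathcal{S}(\R^n)$: expanding $\norm{\mathcal{T}_\varphi u}^2_{H_\Phi}$ as a triple integral and carrying out the Gaussian integration in $z$ collapses the double $y$-integral to $\int_{\R^n} \abs{u(y)}^2 \, dy$, the normalizing constant $c_\varphi$ in (\ref{choice of C to make FBI transform unitary}) being chosen precisely so that this comes out with coefficient one. Density of $\mathcal{S}(\R^n)$ in $L^2(\R^n)$ then extends $\mathcal{T}_\varphi$ to an isometry $L^2(\R^n) \to H_\Phi(\C^n)$; to obtain surjectivity I would exhibit the explicit adjoint $\mathcal{T}_\varphi^*$ (again a Gaussian oscillatory integral) and verify $\mathcal{T}_\varphi^* \mathcal{T}_\varphi = I$ on $L^2(\R^n)$ and $\mathcal{T}_\varphi \mathcal{T}_\varphi^* = I$ on $H_\Phi(\C^n)$, i.e.\ that $\mathcal{T}_\varphi \mathcal{T}_\varphi^*$ is the orthogonal (Bergman) projection of $L^2(\C^n, e^{-2\Phi}\,L(dz))$ onto $H_\Phi(\C^n)$. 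This makes $\mathcal{T}_\varphi$ unitary with inverse $\mathcal{T}_\varphi^*$.

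For the bijection $\mathcal{S}(\R^n) \to H^\infty_\Phi(\C^n)$, I would bound $\mathcal{T}_\varphi u$ for $u \in \mathcal{S}(\R^n)$ by repeated integration by parts in $y$: since the critical point $y(z)$ of $y \mapsto \varphi(z,y)$ is a linear function of $z$ and the Hessian $\textrm{Im} \ D$ is positive-definite, this produces estimates $\abs{\mathcal{T}_\varphi u(z)} \le C_N \langle z \rangle^{-N} e^{\Phi(z)}$ for every $N$, whence $\mathcal{T}_\varphi u \in H^\infty_\Phi(\C^n)$ by the weighted $L^\infty$-characterization in Lemma \ref{growth_of_entire_functions}. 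Conversely, given $v \in H^\infty_\Phi(\C^n)$, the same characterization shows that after absorbing the Gaussian weight the integrand defining $\mathcal{T}_\varphi^* v$ decays rapidly, and differentiating under the integral sign and estimating gives $\mathcal{T}_\varphi^* v \in \mathcal{S}(\R^n)$; so $\mathcal{T}_\varphi^*$ maps $H^\infty_\Phi(\C^n)$ into $\mathcal{S}(\R^n)$. Since $\mathcal{T}_\varphi^* = \mathcal{T}_\varphi^{-1}$ from the first step, these two mapping properties together give the bijection, and continuity in both directions follows by tracking the seminorms through the estimates (or by the closed graph theorem).

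For the bijection $\mathcal{S}'(\R^n) \to H^{-\infty}_\Phi(\C^n)$, I would note that for fixed $z$ the function $y \mapsto e^{i\varphi(z,y)}$ lies in $\mathcal{S}(\R^n)$ and depends holomorphically on $z$, so that $\mathcal{T}_\varphi u(z) = c_\varphi \langle u, e^{i\varphi(z,\cdot)}\rangle$ is a well-defined entire function for $u \in \mathcal{S}'(\R^n)$, consistent with the earlier definition on $L^2(\R^n)$; continuity of $u$ on $\mathcal{S}(\R^n)$ then yields $\abs{\mathcal{T}_\varphi u(z)} \le C \langle z \rangle^N e^{\Phi(z)}$ for some $N$ depending on $u$, so $\mathcal{T}_\varphi u \in H^{-\infty}_\Phi(\C^n)$ by Lemma \ref{growth_of_entire_functions}. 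Bijectivity I would deduce by duality: $H^{-\infty}_\Phi(\C^n)$ is the anti-dual of $H^\infty_\Phi(\C^n)$ for the Bargmann pairing and $\mathcal{S}'(\R^n)$ the dual of $\mathcal{S}(\R^n)$, and the map just defined is, up to conjugation conventions, the transpose of the bijection of the previous step; alternatively one checks directly that $\mathcal{T}_\varphi^*$ extends to a map $H^{-\infty}_\Phi(\C^n) \to \mathcal{S}'(\R^n)$ which remains a two-sided inverse, using density of $H^\infty_\Phi(\C^n)$ in $H^{-\infty}_\Phi(\C^n)$ and of $\mathcal{S}(\R^n)$ in $\mathcal{S}'(\R^n)$.

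The only genuinely computational input is the first step: verifying the Gaussian integral identities behind the Plancherel formula (with the exact constant $c_\varphi$) and behind the inversion formula $\mathcal{T}_\varphi^* \mathcal{T}_\varphi = I$. Everything afterward is soft, relying on non-stationary-phase / integration-by-parts estimates, the weighted $L^\infty$-descriptions of $H^{\pm\infty}_\Phi(\C^n)$ from Lemma \ref{growth_of_entire_functions}, and standard density and duality arguments; the one point requiring a little care is the compatibility of the several definitions of $\mathcal{T}_\varphi$ (on $\mathcal{S}(\R^n)$, on $L^2(\R^n)$, and on $\mathcal{S}'(\R^n)$) and of $\mathcal{T}_\varphi^*$ as one passes between the spaces.
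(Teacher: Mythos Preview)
Your proposal is correct and follows the standard route: the paper itself does not give a self-contained proof but simply cites Theorem 13.7 of \cite{SemiclassicalAnalysis}, Theorem 1.3.3 of \cite{Minicourse}, and Proposition 6.1 of \cite{HyperbolicOperators} for the unitarity, and then invokes Lemma \ref{growth_of_entire_functions} together with Proposition 6.1 of \cite{HyperbolicOperators} (and Section 12.2 of \cite{Lectures_on_Resonances}) for the $\mathcal{S}$ and $\mathcal{S}'$ bijections. Your sketch is a faithful unpacking of exactly those arguments, so there is nothing to add.
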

\begin{proof}
	There are many available proofs of the unitarity of $\mathcal{T}_\varphi: L^2(\R^n) \rightarrow H_\Phi(\C^n)$. The reader may consult, for instance, Theorem 13.7 of \cite{SemiclassicalAnalysis}, Theorem 1.3.3 of \cite{Minicourse}, or Proposition 6.1 of \cite{HyperbolicOperators}. The bijectivity of $\mathcal{T}_\varphi: \mathcal{S}(\R^n) \rightarrow H^\infty_\Phi(\C^n)$ and $\mathcal{T}_\varphi: \mathcal{S}'(\R^n) \rightarrow H^{-\infty}_\Phi(\C^n)$ is also well-known. It follows immediately from, for instance, Lemma \ref{growth_of_entire_functions} and Proposition 6.1 of \cite{HyperbolicOperators}. See also Section 12.2 of \cite{Lectures_on_Resonances}.
\end{proof}

We next discuss some functional analytic aspects of the spaces $H^s_\Phi(\C^n)$, $s\in \R$. Let $\Phi$ be a strictly plurisubharmonic quadratic form on $\C^n$. Let $\Psi(\cdot, \cdot)$ be the polarization of $\Phi$, i.e. the unique holomorphic quadratic form on $\C^{2n}$ such that $\Psi(z,\overline{z}) = \Phi(z)$ for all $z \in \C^n$. We recall (see \cite{Minicourse} or Chapter 13 of \cite{SemiclassicalAnalysis}) that the \emph{Bergman projection} associated to $\Phi$ is the orthogonal projector $\Pi_\Phi: L^2_{\Phi, 0}(\C^n) \rightarrow H_\Phi(\C^n)$. It is given explicitly by
\begin{align} \label{definition_Bergman_projector}
	\Pi_\Phi u(z) = C_\Phi \int_{\C^n} e^{2 \Psi(z, \overline{w})} u(w) e^{-2 \Phi(w)} \, L(dw), \ \ u \in H_\Phi(\C^n),
\end{align}
where
\begin{align} \label{Bergman_constant}
	C_\Phi = \left(\frac{2}{\pi} \right)^n \det{\p^2_{z \overline{z}} \Phi}.
\end{align}
We recall that $\Psi(\cdot, \cdot)$ satisfies the `fundamental estimate'
\begin{align} \label{fundamental_estimate}
	2 \textrm{Re} \ \Psi(z,\overline{w}) - \Phi(z) - \Phi(w) \asymp -\abs{z-w}^2, \ \ z ,w \in \C^n.
\end{align}
 For a proof, see page 492 of \cite{Minicourse} or the proof of Theorem 13.6 in \cite{SemiclassicalAnalysis}. From (\ref{fundamental_estimate}) and Schur's lemma, it follows that the operator (\ref{definition_Bergman_projector}) is bounded $L^2_{\Phi, s}(\C^n) \rightarrow L^2_{\Phi, s}(\C^n)$ for every $s \in \R$. In fact, $\Pi_\Phi$ defined by (\ref{definition_Bergman_projector}) coincides with the orthogonal projection $L^2_{\Phi, s}(\C^n) \rightarrow H^s_\Phi(\C^n)$ (see \cite{Lectures_on_Resonances} Section 12.2), and we have
\begin{align} \label{Bergman_projection_is_reproducing}
	\Pi_\Phi u = u, \ \ \forall u \in H^{-\infty}_\Phi(\C^n).
\end{align}
Using (\ref{Bergman_projection_is_reproducing}), one can prove

\begin{proposition}[\cite{Lectures_on_Resonances} Section 12.2] \label{density_proposition} For any strictly plurisubharmonic quadratic form $\Phi$ on $\C^n$, the space $H^{\infty}_\Phi(\C^n)$ is dense in $H^s_\Phi(\C^n)$ for every $s \in \R$. Consequently, $H^\infty_\Phi(\C^n)$ is dense in $H^{-\infty}_\Phi(\C^n)$.
\begin{proof}
Let $\Phi$ be a strictly plurisubharmonic quadratic form on $\C^n$. Let $s \in \R$ and $u \in H^s_\Phi(\C^n)$ be arbitrary. Let $\chi \in C^\infty_0(\C^n)$ be such that $0 \le \chi \le 1$ and $\chi \equiv 1$ in a neighborhood of $0 \in \C^n$. For $\epsilon>0$, set
\begin{align}
	u_\epsilon = \Pi_\Phi(\chi(\epsilon z) u),
\end{align}	
where $\Pi_\Phi$ is the Bergman projector (\ref{definition_Bergman_projector}). Using the fundamental estimate (\ref{fundamental_estimate}), we see that for any $\epsilon>0$ and $N>0$,
\begin{align}
	\begin{split}
	\langle z \rangle^N \abs{u_\epsilon(z)} e^{-\Phi(z)} &\le C \int_{\C^n} e^{-c \abs{z-w}^2} \langle z \rangle^N \abs{\chi(\epsilon w) u(w)} e^{-\Phi(w)} \, L(dw) \\
	&\le C \int_{\C^n} e^{-c\abs{z-w}^2} \langle z - w \rangle^N \langle w \rangle^N \abs{\chi(\epsilon w) u(w)} e^{-\Phi(w)} \, L(dw) \\
	&\le C_{\epsilon, N} \int_{\C^n} e^{-c \abs{z-w}^2} \langle z - w \rangle^{N} \langle w \rangle^s \abs{u(w)} e^{-\Phi(w)} \, L(dw),
	\end{split}
\end{align}
where $C_{\epsilon, N}>0$ depends only on $\epsilon$ and $N$. Applying Schur's lemma, we find that $u_\epsilon \in H^N_\Phi(\C^n)$ for any $\epsilon>0$ and $N>0$. Thus $u_\epsilon \in H^\infty_\Phi(\C^n)$ for all $\epsilon>0$. Now we claim that $u_\epsilon \rightarrow u$ in $H^s_\Phi(\C^n)$ as $\epsilon \rightarrow 0^+$. Indeed, from (\ref{Bergman_projection_is_reproducing}),
\begin{align}
	\begin{split}
	\langle z \rangle^s \abs{u(z)-u_\epsilon(z)} e^{-\Phi(z)} &\le C \int_{\C^n} e^{-c \abs{z-w}^2} \langle z \rangle^s (1-\chi(\epsilon w)) \abs{u(w)} e^{-\Phi(w)} \, L(dw) \\
	&\le C \int_{\C^n} K_s(z,w) \langle w \rangle^s (1-\chi(\epsilon w)) \abs{u(w)} e^{-\Phi(w)} \, L(dw),
	\end{split}
\end{align}
where
\begin{align}
	K_s(z,w) = \langle z - w \rangle^{\abs{s}} e^{-c \abs{z-w}^2}, \ \ z, w \in \C^n.
\end{align}
As
\begin{align}
	\sup_{z \in \C^n} \int_{\C^n} \abs{K_s(z,w)} \, L(dw) < \infty \ \ \textrm{and} \ \ \sup_{w \in \C^n} \int_{\C^n} \abs{K_s(z,w)} \, L(dz) < \infty,
\end{align}
we deduce from Schur's lemma that there is a $C>0$ such that
\begin{align} \label{almost_done_with_lemma}
	\norm{u-u_\epsilon}_{H^s_\Phi(\C^n)} \le C \norm{(1-\chi(\epsilon z)) u(z)}_{L^2_{\Phi, s}(\C^n)}
\end{align}
for all $\epsilon>0$. By dominated convergence, the righthand side of (\ref{almost_done_with_lemma}) converges to $0$ as $\epsilon \rightarrow 0^+$. Therefore $\norm{u-u_\epsilon}_{H^s_\Phi(\C^n)} \rightarrow 0$ as $\epsilon \rightarrow 0^+$.
\end{proof}
\end{proposition}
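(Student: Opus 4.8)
The plan is to mollify in space by a compactly supported cutoff and then project back onto the holomorphic subspace with the Bergman projector $\Pi_\Phi$ from (\ref{definition_Bergman_projector}). Fix $s \in \R$ and $u \in H^s_\Phi(\C^n)$, choose $\chi \in C^\infty_0(\C^n)$ with $0 \le \chi \le 1$ and $\chi \equiv 1$ near $0$, and for $\epsilon > 0$ set $u_\epsilon = \Pi_\Phi(\chi(\epsilon z) u)$. Since $u$ is entire, hence locally bounded, and $\chi(\epsilon z) u$ is compactly supported, this product lies in $L^2_{\Phi, N}(\C^n)$ for every $N \in \R$; as $\Pi_\Phi$ acts boundedly on each such space, $u_\epsilon$ is a well-defined element of $H_\Phi(\C^n)$.

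First I would verify that $u_\epsilon \in H^\infty_\Phi(\C^n)$. Inserting $\chi(\epsilon z) u$ into the integral formula (\ref{definition_Bergman_projector}) and bounding the reproducing kernel via the fundamental estimate (\ref{fundamental_estimate}) by $C e^{\Phi(z)} e^{-c|z-w|^2}$, one gets for any $N$
\[
\langle z \rangle^N |u_\epsilon(z)| e^{-\Phi(z)} \le C \int_{\C^n} \langle z - w\rangle^{|N|} e^{-c|z-w|^2} \langle w \rangle^N |\chi(\epsilon w) u(w)| e^{-\Phi(w)} \, L(dw).
\]
Because the integrand is supported in a fixed compact set and the kernel $\langle z - w\rangle^{|N|} e^{-c|z-w|^2}$ satisfies the hypotheses of Schur's lemma, the right-hand side is a bounded function of $z$, so $u_\epsilon \in H^N_\Phi(\C^n)$; since $N$ is arbitrary, $u_\epsilon \in H^\infty_\Phi(\C^n)$.

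The second and main step is convergence $u_\epsilon \to u$ in $H^s_\Phi(\C^n)$ as $\epsilon \to 0^+$. Here I would use the reproducing identity (\ref{Bergman_projection_is_reproducing}), which gives $u = \Pi_\Phi u$ since $H^s_\Phi(\C^n) \subset H^{-\infty}_\Phi(\C^n)$, so that $u - u_\epsilon = \Pi_\Phi((1-\chi(\epsilon z))u)$. Applying (\ref{fundamental_estimate}) again and splitting the weight by the Peetre-type inequality $\langle z\rangle^s \le \langle z - w\rangle^{|s|}\langle w\rangle^s$, one reduces matters to the boundedness on $L^2_{\Phi, s}(\C^n)$, uniformly in $\epsilon$, of the integral operator with kernel $K_s(z,w) = \langle z - w\rangle^{|s|} e^{-c|z-w|^2}$, which holds by Schur's lemma since both $\sup_z \int K_s(z,w)\,L(dw)$ and $\sup_w \int K_s(z,w)\,L(dz)$ are finite. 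This yields $\|u - u_\epsilon\|_{H^s_\Phi(\C^n)} \le C \|(1-\chi(\epsilon z))u\|_{L^2_{\Phi, s}(\C^n)}$ with $C$ independent of $\epsilon$, and the right-hand side tends to $0$ by dominated convergence. Density of $H^\infty_\Phi(\C^n)$ in $H^{-\infty}_\Phi(\C^n)$ then follows at once: any $u \in H^{-\infty}_\Phi(\C^n)$ lies in some $H^s_\Phi(\C^n)$, is approximated there, and the inclusion $H^s_\Phi(\C^n) \hookrightarrow H^{-\infty}_\Phi(\C^n)$ is continuous for the inductive limit topology.

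The main obstacle I anticipate is purely the bookkeeping of the polynomial weights: one must peel $\langle w\rangle^s$ off $\langle z\rangle^s$ and confirm that the two Schur integrals converge. This is where strict plurisubharmonicity of $\Phi$ really enters — via the Gaussian factor $e^{-c|z-w|^2}$ in the fundamental estimate, which both produces the off-diagonal decay needed for Schur and lets $\Pi_\Phi$ act on the weighted spaces $L^2_{\Phi,s}(\C^n)$ in the first place. Everything else is routine.
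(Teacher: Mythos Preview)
Your proposal is correct and follows essentially the same route as the paper: define $u_\epsilon = \Pi_\Phi(\chi(\epsilon z)u)$, use the fundamental estimate (\ref{fundamental_estimate}) together with Peetre's inequality and Schur's lemma to place $u_\epsilon$ in every $H^N_\Phi(\C^n)$, and then use the reproducing property (\ref{Bergman_projection_is_reproducing}) plus the same Schur kernel $K_s(z,w)=\langle z-w\rangle^{|s|}e^{-c|z-w|^2}$ and dominated convergence to obtain $\|u-u_\epsilon\|_{H^s_\Phi}\to 0$. The only cosmetic difference is that the paper derives $u_\epsilon\in H^N_\Phi$ by bounding the weighted $L^2$ norm directly via Schur, whereas you note the compact support of $\chi(\epsilon\cdot)u$ to get a pointwise bound first; both are fine.
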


We conclude this section by establishing a proposition that identifies the dual space of $H^s_{\Phi}(\C^n)$ with $H^{-s}_\Phi(\C^n)$.

\begin{proposition} \label{dual_space_proposition}
	Let $\Phi$ be a strictly plurisubharmonic quadratic form on $\C^n$ and let $s \in \R$. For every $v \in H^{-s}_\Phi(\C^n)$, the functional 
	\begin{align} \label{psi_v}
		\psi_v(u) = \int_{\C^n} u(z) \overline{v(z)} e^{-2\Phi(z)} \, L(dz), \ \ u \in H^s_\Phi(\C^n), 
	\end{align}
	defines an element of $(H^s_\Phi(\C^n))'$ with $\norm{\psi_v} \le \norm{v}_{-s}$. Moreover, the map $v \mapsto \psi_v$ is a bounded antilinear isomorphism $H^{-s}_\Phi(\C^n) \rightarrow (H^s_\Phi(\C^n))'$.
\end{proposition}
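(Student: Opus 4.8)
The plan is to establish the statement in four steps: boundedness and antilinearity of $v \mapsto \psi_v$; its injectivity; its surjectivity; and boundedness of its inverse. For the first step, given $u \in H^s_\Phi(\C^n)$ and $v \in H^{-s}_\Phi(\C^n)$, I would write the integrand of $(\ref{psi_v})$ as $\big(\langle z\rangle^s u(z)e^{-\Phi(z)}\big)\cdot\overline{\big(\langle z\rangle^{-s}v(z)e^{-\Phi(z)}\big)}$ and apply the Cauchy--Schwarz inequality; this yields at once $\abs{\psi_v(u)} \le \norm{u}_s\norm{v}_{-s}$, so that $\psi_v \in (H^s_\Phi(\C^n))'$ with $\norm{\psi_v} \le \norm{v}_{-s}$, and since $v$ appears conjugated in $(\ref{psi_v})$ the assignment $v \mapsto \psi_v$ is antilinear and bounded from $H^{-s}_\Phi(\C^n)$ into $(H^s_\Phi(\C^n))'$.

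For injectivity, suppose $\psi_v = 0$ and fix $z \in \C^n$. Let $k_z$ be the reproducing kernel of the Bargmann space $H_\Phi(\C^n)$ at $z$; by $(\ref{definition_Bergman_projector})$ it is $k_z(w) = C_\Phi e^{2\Psi(w,\overline z)}$, where I use that $C_\Phi$ is real (being a positive multiple of $\det \p^2_{z\overline z}\Phi > 0$) and the symmetry $\overline{\Psi(z,\overline w)} = \Psi(w,\overline z)$ of the polarization, which follows from $\Phi$ being real-valued. The fundamental estimate $(\ref{fundamental_estimate})$ shows that $\abs{k_z(w)}e^{-\Phi(w)}$ decays faster than any power of $\langle w\rangle$, so $k_z \in H^\infty_\Phi(\C^n) \subset H^s_\Phi(\C^n)$, and then $0 = \psi_v(k_z) = \overline{(v,k_z)_0} = \overline{v(z)}$ by the reproducing identity $(\ref{Bergman_projection_is_reproducing})$ (valid since $v \in H^{-s}_\Phi(\C^n) \subset H^{-\infty}_\Phi(\C^n)$). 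As $z$ was arbitrary, $v \equiv 0$.

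For surjectivity, let $\ell \in (H^s_\Phi(\C^n))'$. As $H^s_\Phi(\C^n)$ is a Hilbert space, the Riesz representation theorem provides a unique $g \in H^s_\Phi(\C^n)$ with $\ell(u) = (u,g)_s$ for all $u \in H^s_\Phi(\C^n)$. The function $\langle z\rangle^{2s}g$ lies in $L^2_{\Phi,-s}(\C^n)$ with $\norm{\langle z\rangle^{2s}g}_{L^2_{\Phi,-s}} = \norm{g}_s$, so I set $v := \Pi_\Phi(\langle z\rangle^{2s}g) \in H^{-s}_\Phi(\C^n)$, using that $\Pi_\Phi$ is the orthogonal projection onto $H^{-s}_\Phi(\C^n)$ in $L^2_{\Phi,-s}(\C^n)$. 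It then remains to prove the identity
\begin{align*}
\int_{\C^n} u(z)\,\overline{\Pi_\Phi w(z)}\,e^{-2\Phi(z)}\,L(dz) = \int_{\C^n} u(z)\,\overline{w(z)}\,e^{-2\Phi(z)}\,L(dz)
\end{align*}
for all $u \in H^s_\Phi(\C^n)$ and $w \in L^2_{\Phi,-s}(\C^n)$: applied to $w = \langle z\rangle^{2s}g$ it gives $\psi_v(u) = (u,g)_s = \ell(u)$. Both sides of this identity are continuous in $u \in H^s_\Phi(\C^n)$ (Cauchy--Schwarz, together with $\norm{\Pi_\Phi w}_{-s} \le \norm{w}_{L^2_{\Phi,-s}}$ for the left side), so by the density statement of Proposition \ref{density_proposition} it suffices to check it for $u \in H^\infty_\Phi(\C^n)$. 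For such $u$, I would substitute $(\ref{definition_Bergman_projector})$ for $\overline{\Pi_\Phi w}$ and, using again the conjugation symmetry of $\Psi$ and the reality of $C_\Phi$, write the double integrand as $u(z)\,C_\Phi\,e^{2\Psi(\zeta,\overline z)}\,\overline{w(\zeta)}\,e^{-2\Phi(\zeta)}e^{-2\Phi(z)}$, apply Fubini's theorem, and recognize the inner $z$-integral as $\Pi_\Phi u(\zeta) = u(\zeta)$ by $(\ref{Bergman_projection_is_reproducing})$, which collapses the double integral to $\int u(\zeta)\overline{w(\zeta)}e^{-2\Phi(\zeta)}\,L(d\zeta)$. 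Fubini is justified because $(\ref{fundamental_estimate})$ bounds the modulus of the double integrand by $C\abs{u(z)}e^{-\Phi(z)}\abs{w(\zeta)}e^{-\Phi(\zeta)}e^{-c\abs{z-\zeta}^2}$; integrating first in $\zeta$ via Cauchy--Schwarz against $w \in L^2_{\Phi,-s}(\C^n)$ leaves a factor $O(\langle z\rangle^{\abs{s}})$, which is absorbed by the rapid decay of $\abs{u(z)}e^{-\Phi(z)}$ for $u \in H^\infty_\Phi(\C^n)$ furnished by Lemma \ref{growth_of_entire_functions}.

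Finally, the $v$ constructed above satisfies $\norm{v}_{-s} \le \norm{\langle z\rangle^{2s}g}_{L^2_{\Phi,-s}} = \norm{g}_s = \norm{\ell}$, so $v \mapsto \psi_v$ has bounded inverse and is therefore an antilinear topological isomorphism $H^{-s}_\Phi(\C^n) \to (H^s_\Phi(\C^n))'$. I expect the integral identity in the surjectivity step to be the principal obstacle: it is the only place requiring a careful interchange of the order of integration, and one has to pin down the conjugation symmetry $\overline{\Psi(z,\overline w)} = \Psi(w,\overline z)$ of the polarization and the reality of the Bergman constant $C_\Phi$ precisely in order to collapse the resulting double integral by means of the reproducing property. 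The subtlety is that $u$ and $w$ live in differently weighted $L^2$ spaces, so a direct self-adjointness argument for $\Pi_\Phi$ on $L^2_{\Phi,0}(\C^n)$ is not available and the explicit kernel computation seems unavoidable.
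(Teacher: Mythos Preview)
Your proof is correct and follows essentially the same route as the paper's: Cauchy--Schwarz for boundedness, and Riesz representation together with the Bergman projection $v = \Pi_\Phi(\langle z\rangle^{2s}g)$ for surjectivity. The only cosmetic differences are that you test injectivity against reproducing kernels rather than against $C^\infty_0(\C^n)$, you make the adjointness identity for $\Pi_\Phi$ across the weighted spaces explicit via Fubini where the paper leaves it implicit, and you bound the inverse directly instead of invoking the closed graph theorem.
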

\begin{proof}
	Let $s \in \R$ be fixed. For $v \in H^{-s}_\Phi(\C^n)$, let $\psi_v$ be as in (\ref{psi_v}). An application of the Cauchy-Schwarz inequality gives
	\begin{align}
		\abs{\psi_v(u)} \le \norm{v}_{-s} \norm{u}_s
	\end{align}
	for all $u \in H^s_\Phi(\C^n)$, and so it is clear that the map $v \mapsto \psi_v$ is a bounded antilinear mapping $H^{-s}_\Phi(\C^n) \rightarrow (H^s_\Phi(\C^n))'$. To see that the map $v \mapsto \psi_v$ is injective, let $v \in H^{-s}_\Phi(\C^n)$ be such that $\psi_v \equiv 0$ on $H^s_\Phi(\C^n)$. Then, using (\ref{Bergman_projection_is_reproducing}) and the identity
	\begin{align} \label{Bergman_phase_symmetry}
		\Psi(w, \overline{z}) = \overline{\Psi(z,\overline{w})}, \ \ w, z \in \C^n,
	\end{align}
	(see Lemma 13.1 in \cite{SemiclassicalAnalysis}), we get, for any $u \in L^2_{\Phi, s}(\C^n)$,
	\begin{align}
		\begin{split}
		&\int_{\C^n} u(z) \overline{v(z)} e^{-2\Phi(z)} \, L(dz) = \int_{\C^n} u(z) \overline{\Pi_\Phi v(z)} e^{-2\Phi(z)} \, L(dz) \\
		&= \int_{\C^n} \Pi_\Phi u(z) \overline{v(z)} e^{-2\Phi(z)} \, L(dz) = 0.
		\end{split}
	\end{align}
	In particular,
	\begin{align}
		\int_{\C^n} u(z) \overline{v(z)} e^{-2\Phi(z)} \, L(dz) = 0
	\end{align}
	for all $u \in C^\infty_0(\C^n)$, and we deduce $v = 0$. To prove surjectivity, let $\psi \in (H^s_\Phi(\C^n))'$ be arbitrary. By the Riesz representation theorem, there exists a unique $v_1 \in H^s_{\Phi}(\C^n)$ such that
	\begin{align}
		\psi(u) = \int_{\C^n} u(z) \overline{v_1(z)} \langle z \rangle^{2s} e^{-2\Phi(z)} \, L(dz), \ \ u \in H^s_\Phi(\C^n).
	\end{align}
	Let
	\begin{align}
		v = \Pi_\Phi(v_1(z) \langle z \rangle^{2s}).
	\end{align}
	Observing that
	\begin{align}
		v_1(z) \langle z \rangle^{2s} \in L^2_{\Phi, -s}(\C^n),
	\end{align}
	we see that $v \in H^{-s}_\Phi(\C^n)$ and hence
	\begin{align}
		\psi = \psi_v.
	\end{align}
	By the closed graph theorem, the map $v \mapsto \psi_v$ is a bounded antilinear isomorphism $H^{-s}_\Phi(\C^n) \rightarrow (H^s_\Phi(\C^n))'$.
\end{proof}


\section{The $1/2$-Gelfand-Shilov Wavefront Set}

In this section we recall the basic definition and properties of the $1/2$-Gelfand-Shilov wavefront set. For a full discussion, see \cite{GelfandShilov} or \cite{HyperbolicOperators}.

Let $\C^{2n} = \C^n_z \times \C^n_\zeta$ and let $\sigma = d\zeta \wedge dz$ be the standard complex symplectic form on $\C^{2n}$. When equipped with the form $\sigma$, the space $\C^{2n}$ becomes a complex symplectic vector space. A $\C$-linear map $\kappa: \C^{2n} \rightarrow \C^{2n}$ is said to be a \emph{complex linear symplectomorphism} or \emph{complex linear canonical transformation} if $\kappa^* \sigma = \sigma$. Note that if $\kappa: \C^{2n} \rightarrow \C^{2n}$ is a complex linear canonical transformation, then $\det{\kappa}=1$, and hence $\kappa$ is automatically bijective.

Associated to the complex symplectic form $\sigma$ are the real $2$-forms $\textrm{Re} \ \sigma$ and $\textrm{Im} \ \sigma$. Suppose $\Sigma$ is a real linear subspace of $\C^{2n}$. We say $\Sigma$ is \emph{$R$-symplectic} if the restriction of $\textrm{Re} \ \sigma$ to $\Sigma$ is non-degenerate, and we say that $\Sigma$ is \emph{$I$-Lagrangian} if $\Sigma$ is a Lagrangian subspace of $\C^{2n}$ with respect to $\textrm{Im} \ \sigma$, i.e. $\textrm{dim}_\R{\Sigma} = 2n$ and $\textrm{Im} \ \sigma|_{\Sigma} = 0$. We say that $\Sigma$ is \emph{totally real} if $\Sigma \cap i \Sigma = \{0\}$, and if, in addition $\textrm{dim}_{\R} \Sigma = 2n$, we say $\Sigma$ is \emph{maximally totally real}. Any real linear subspace of $\C^{2n}$ that is $I$-Lagrangian and $R$-symplectic is automatically maximally totally real. For more background on complex symplectic linear algebra, see \cite{Minicourse}.

Let $\varphi$ be an FBI phase function. By Proposition 1.3.2 of \cite{Minicourse}, $\varphi$ generates a complex linear canonical transformation $\kappa_\varphi: \C^{2n} \rightarrow \C^{2n}$ given implicitly by
\begin{align} \label{canonical transformation associated to FBI phase}
    \C^{2n} \ni (y, - \p_y\varphi(z,y)) \mapsto (z, \p_z \varphi(z,y)) \in \C^{2n}, \ \ (z,y) \in \C^{2n}.
\end{align}
It can be shown that the $\kappa_\varphi$ maps $\R^{2n}$ bijectively onto the space
\begin{align} \label{IR subspace associated to Phi}
    \Lambda_\Phi = \set{\left(z, \frac{2}{i} \p_z \Phi(z) \right) \in \C^{2n}}{z \in \C^n},
\end{align}
where $\Phi$ is the strictly plurisubharmonic weight associated to $\varphi$. See \cite{SemiclassicalAnalysis} Theorem 13.5 for a proof. Since $\kappa_\varphi$ is a complex linear canonical transformation, the space $\Lambda_\Phi$ is $I$-Lagrangian and $R$-symplectic. Hence $\Lambda_\Phi$ is maximally totally real.

Suppose $\Phi$ is a strictly plurisubharmonic quadratic form on $\C^n$. Let $\Lambda_\Phi$ be as in (\ref{IR subspace associated to Phi}) and let $\textrm{pr}_\Phi = \pi_1|_{\Lambda_{\Phi}}$. Since $\Lambda_\Phi$ is the graph of the $\R$-linear map $\frac{2}{i} \p_z \Phi: \C^{n} \rightarrow \C^n$, the projection $\textrm{pr}_{\Phi}$ is an $\R$-linear isomorphism $\Lambda_\Phi \rightarrow \C^n$. If $\Phi$ is the strictly plurisubharmonic weight associated to an FBI phase function $\varphi$ and $\kappa_\varphi$ is the complex canonical transformation (\ref{canonical transformation associated to FBI phase}) generated by $\varphi$, then the composition
\begin{align} \label{first definition of kappa flat}
    \kappa^\flat_\varphi = \textrm{pr}_\Phi \circ \kappa_\varphi
\end{align}
is an $\R$-linear isomorphism $\mathbb{R}^{2n} \rightarrow \C^n$.

Let $X$ be a real vector space. A subset $V$ of $X \backslash \{0\}$ is said to be \emph{conic} if $tx \in V$ whenever $x \in V$ and $t > 0$. If $X$ and $Y$ are real vector spaces, $T: X \rightarrow Y$ is a linear map, and $V$ is a conic subset of $X \backslash \{0\}$, then $T(V)$ is a conic subset of $Y$. In particular, if $V$ is a conic subset of $\R^{2n} \backslash \{0\}$, $\varphi$ is an FBI phase function, and $\kappa^\flat_\varphi$ is as in (\ref{first definition of kappa flat}), then $\kappa^\flat_\varphi(V)$ is a conic subset of $\C^n \backslash \{0\}$.

\begin{definition}[\cite{GelfandShilov} Definition 3.1, \cite{HyperbolicOperators} Definition 6.6] Let $u \in \mathcal{S}'(\R^n)$. The \emph{1/2-Gelfand-Shilov wavefront} of $u$, denoted $\textrm{WF}^{1/2}(u)$, is the complement in $\R^{2n} \backslash \{(0,0)\}$ of the set of points $(x_0, \xi_0)$ for which there exists an FBI phase function $\varphi$ and constants $C, c > 0$ such that
\begin{align} \label{complement of analytic Gaussian decay estimate}
    \abs{\mathcal{T}_\varphi u(z)} \le C e^{\Phi(z)-c \abs{z}^2}
\end{align}
for all $z$ within some open conic neighborhood $V$ of $\kappa^\flat_\varphi(x_0, \xi_0)$ in $\C^n \backslash \{0\}$. Here $\Phi$ is the strictly plurisubharmonic weight associated to $\varphi$ and $\kappa^\flat_\varphi$ is as in (\ref{first definition of kappa flat}).
\end{definition}

As shown in  \cite{HyperbolicOperators} Proposition 6.4, one may use any FBI phase to determine if a point $(x_0, \xi_0) \in \R^{2n} \backslash \{(0,0)\}$ lies in $\textrm{WF}^{1/2}(u)$. Indeed, suppose $u \in \mathcal{S}'(\R^n)$, $(x_0, \xi_0) \in \R^{2n} \backslash \{(0,0)\}$, and that there is an FBI phase function $\varphi$, constants $C, c>0$, and an open conic neighborhood $V$ of $\kappa^\flat_\varphi(x_0, \xi_0)$ in $\mathbb{C}^n \backslash \{0\}$ such that the estimate (\ref{complement of analytic Gaussian decay estimate}) holds in $V$. If $\varphi_1$ is another FBI phase with associated strictly plurisubharmonic weight $\Phi_1$ and associated canonical transformation $\kappa_{\varphi_1}$, then there exists an open conic neighborhood $V_1$ of $\kappa^\flat_{\varphi_1}(x_0, \xi_0)$ in $\C^{n} \backslash \{0\}$ and constants $C_1, c_1>0$ such that
\begin{align*}
    \abs{\mathcal{T}_{\varphi_1} u(z)} \le C_1 e^{\Phi_1(z)-c \abs{z}^2}, \ \ \ z \in V_1.
\end{align*}
Thus we may reformulate the definition of $\textrm{WF}^{1/2}(u)$ as follows: if $u \in \mathcal{S}'(\R^n)$ and $(x_0, \xi_0) \in \R^{2n} \backslash \{(0,0)\}$, then $(x_0, \xi_0) \in \textrm{WF}^{1/2}(u)$ if and only if there is an FBI phase function $\varphi$ such that for every open conic neighborhood $V$ of $\kappa^\flat_\varphi(x_0, \xi_0)$ and every choice of constants $C, c >0$ the estimate (\ref{complement of analytic Gaussian decay estimate}) fails to hold for every $z \in V$. In particular, $(x_0, \xi_0) \in \textrm{WF}^{1/2}(u)$ if there is an FBI phase $\varphi$ with associated weight $\Phi$ and canonical transformation $\kappa_\varphi$ such that
\begin{align*}
    \mathcal{T}_\varphi u(\lambda \kappa^\flat_\varphi(x_0,\xi_0)) e^{-\Phi(\lambda \kappa_\varphi(x_0,\xi_0))} \neq o(1) \ \textrm{as} \ \lambda \rightarrow \infty.
\end{align*}

For notational purposes, it is convenient to introduce the notion of the $1/2$-Gelfand-Shilov wavefront set of elements of $H^{-\infty}_\Phi(\C^n)$ relative to the plurisubharmonic weight $\Phi$. This definition appears to be original, but we feel its addition will help to make some of our results more transparent.

\begin{definition} \label{relative analytic wavefront set}
Let $\Phi$ be a strictly plurisubharmonic quadratic form on
$\C^n$. For $u \in H^{-\infty}_\Phi(\C^n)$, we define \emph{$1/2$-Gelfand-Shilov wavefront set of $u$ relative to $\Phi$}, denoted $\textrm{WF}^{1/2}_\Phi(u)$, as the complement in $\C^{n}\backslash \{0\}$ of all $z \in \C^n \backslash \{0\}$ for which there exists an open conic neighborhood $V$ of $z$ in $\C^n \backslash \{0\}$ and constants $C, c>0$ such that
\begin{align*}
    \abs{u(z)} \le C e^{\Phi(z)-c \abs{z}^2}, \ \ \ z \in V.
\end{align*}
\end{definition}

If $u \in H^{-\infty}_\Phi(\C^n)$, then $\textrm{WF}^{1/2}_\Phi(u)$ is a closed conic subset of $\C^n \backslash \{0\}$. Furthermore, using Definition \ref{relative analytic wavefront set}, we may restate the definition of the $1/2$-Gelfand-Shilov wavefront set of tempered distributions as follows: if $u \in \mathcal{S}'(\R^n)$, then a point $(x_0, \xi_0) \in \R^{2n} \backslash \{(0,0)\}$ belongs to $\textrm{WF}^{1/2}(u)$ if and only if there is an FBI phase $\varphi$ with associated weight $\Phi$ and canonical transformation $\kappa_\varphi$ such that $\kappa^\flat_\varphi(x_0, \xi_0) \in \textrm{WF}^{1/2}_\Phi(\mathcal{T}_\varphi u)$. If $u \in \mathcal{S}'(\R^n)$, $(x_0, \xi_0) \in \R^{2n}$, and $\varphi$ is an FBI phase such that $\kappa^\flat_\varphi(x_0,\xi_0) \in \textrm{WF}^{1/2}_\Phi(\mathcal{T}_\varphi u)$, then the same is true with $\varphi$ replaced by any other FBI phase $\varphi_1$ and with $\Phi$ and $\kappa_\varphi$ replaced by the strictly plurisubharmonic weight and complex canonical transformation associated to $\varphi_1$ respectively.

\begin{proposition}[\cite{HyperbolicOperators} \label{empty analytic wavefront set} Proposition 6.9]
Suppose $u \in \mathcal{S}'(\R^n)$. Then $\textrm{WF}^{1/2}(u) = \emptyset$ if and only if $u$ extends to a holomorphic function $U(z)$ on $\C^n$ such that
\begin{align} \label{Gelfand-Shilov estimate}
    \abs{U(z)} \le C e^{C \abs{y}^2-c\abs{x}^2}
\end{align}
for all $z = x + i y \in \C^n$.
\end{proposition}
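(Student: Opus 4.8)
The plan is to transfer everything to the FBI side and then argue by FBI inversion together with a contour deformation. Fix an FBI phase function $\varphi$ with strictly plurisubharmonic weight $\Phi$. By the reformulation of $\textrm{WF}^{1/2}$ recorded before the statement (and the fact that $\kappa^\flat_\varphi$ is an $\R$-linear isomorphism $\R^{2n}\to\C^n$), one has $\textrm{WF}^{1/2}(u)=\emptyset$ if and only if $\textrm{WF}^{1/2}_\Phi(\mathcal{T}_\varphi u)=\emptyset$. Covering the unit sphere of $\C^n$ by finitely many of the conic neighbourhoods in Definition \ref{relative analytic wavefront set} and using that $\mathcal{T}_\varphi u$ is entire (hence locally bounded, while $e^{\Phi(z)-c\abs{z}^2}$ is bounded below on compacts), this is in turn equivalent to a \emph{global} estimate $\abs{\mathcal{T}_\varphi u(z)}\le Ce^{\Phi(z)-c\abs{z}^2}$ on all of $\C^n$ for some $C,c>0$. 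So the proposition reduces to the statement that $\mathcal{T}_\varphi u$ obeys this global Gaussian bound if and only if $u$ extends to an entire function satisfying (\ref{Gelfand-Shilov estimate}).

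For the forward direction, the global bound forces $\mathcal{T}_\varphi u\in H_\Phi(\C^n)$ (because $e^{-c\abs{z}^2}$ is square-integrable), so by Proposition \ref{image_of_FBI_transform} we get $u\in L^2(\R^n)$ and $u=\mathcal{T}_\varphi^*\mathcal{T}_\varphi u$. I would write $\mathcal{T}_\varphi^*$ as an absolutely convergent integral against the kernel $\overline{e^{i\varphi(z,y)}}$; for $y\in\R^n$ this kernel equals $e^{-i\tilde\varphi(\overline z,y)}$ for a holomorphic quadratic form $\tilde\varphi$ on $\C^{2n}$ (obtained by conjugating the coefficients of $\varphi$), so replacing $y$ by $w\in\C^n$ produces the candidate extension $U(w)=c\int_{\C^n}e^{-i\tilde\varphi(\overline z,w)}\,\mathcal{T}_\varphi u(z)\,e^{-2\Phi(z)}\,L(dz)$. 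Expanding $\tilde\varphi$ about the real point $y_c(z)\in\R^n$ where $\sup_y(-\textrm{Im}\,\varphi(z,y))=\Phi(z)$ is attained, and using the sharpened pointwise bound $-\textrm{Im}\,\varphi(z,w_1)\le\Phi(z)-c\abs{w_1-y_c(z)}^2$ for real $w_1$, the integrand is dominated by $C\exp(-c\abs{z}^2-c\abs{w_1-y_c(z)}^2+C\abs{w_2}^2)$, where $w=w_1+iw_2$. Carrying out the Gaussian integral in $z$ and using that the linear map $z\mapsto y_c(z)$ vanishes only with $z$ gives $\abs{U(w)}\le Ce^{C\abs{w_2}^2-c\abs{w_1}^2}$, i.e. (\ref{Gelfand-Shilov estimate}); local uniformity of the estimate yields holomorphy of $U$, and $U|_{\R^n}=u$ by the inversion formula.

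For the converse, suppose $u$ extends to an entire $U$ with (\ref{Gelfand-Shilov estimate}); then $u|_{\R^n}\in\mathcal{S}(\R^n)$ and $\mathcal{T}_\varphi u(z)=c_\varphi\int_{\R^n}e^{i\varphi(z,y)}U(y)\,dy$. The crucial move is to deform the contour from $\R^n$ to $\R^n+i\gamma(z)$ with $\gamma(z)=\lambda\,\xi_c(z)$ a small multiple of the real covector $\xi_c(z)=\p_y\varphi(z,y_c(z))$; Cauchy's theorem applies because $e^{i\varphi(z,y)}$ is Gaussian-decaying in $\textrm{Re}\,y$ while $U$ decays like $e^{-c\abs{\textrm{Re}\,y}^2}$. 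A direct computation with the quadratic form $\varphi(z,y+i\gamma)$, absorbing the cross terms by taking $\lambda$ small, bounds the shifted integrand by $C\exp(\Phi(z)-c\abs{y-y_c(z)}^2-c\abs{y}^2-c\abs{\xi_c(z)}^2)$; integrating in $y$ contributes an additional factor $e^{-c\abs{y_c(z)}^2}$, so $\abs{\mathcal{T}_\varphi u(z)}\le Ce^{\Phi(z)-c(\abs{y_c(z)}^2+\abs{\xi_c(z)}^2)}$. Since $z\mapsto(y_c(z),-\xi_c(z))$ is precisely the inverse of $\kappa^\flat_\varphi$, hence an $\R$-linear isomorphism $\C^n\to\R^{2n}$, one has $\abs{y_c(z)}^2+\abs{\xi_c(z)}^2\asymp\abs{z}^2$, and the required global bound follows.

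The step I expect to be the main obstacle is the contour deformation in the converse direction: one must recognise that pushing the $y$-contour only toward $y_c(z)$ produces decay in merely an $n$-dimensional family of directions and is therefore inadequate, so the deformation must also move in the $\xi_c(z)$ direction; keeping the quadratic-form bookkeeping for $\varphi(z,y+i\gamma)$ under control so that a single small $\lambda$ absorbs all cross terms uniformly in $z$ is the delicate point. The Gaussian estimates in the forward direction are comparatively mechanical, relying only on the fact that a positive semidefinite quadratic form which is positive definite in the integration variable yields, after Gaussian integration, a bound governed by its minimum over that variable.
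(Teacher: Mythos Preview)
The paper does not prove this proposition; it is quoted with attribution to H\"{o}rmander (\cite{HyperbolicOperators}, Proposition 6.9) and no argument is supplied. So there is nothing to compare your approach against within the paper itself. That said, your outline is essentially the standard FBI-side proof and is sound, with one slip worth flagging.

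In the forward direction you write that ``the linear map $z\mapsto y_c(z)$ vanishes only with $z$.'' This is false: $y_c:\C^n\to\R^n$ is a real-linear map between spaces of real dimensions $2n$ and $n$, so its kernel has real dimension at least $n$. Concretely, $y_c(z)=-(\textrm{Im}\,D)^{-1}\textrm{Im}(Bz)$, which vanishes on the $n$-real-dimensional subspace $B^{-1}\R^n$. Fortunately your estimate survives this error: after integrating $\exp(-c\abs{z}^2-c\abs{w_1-y_c(z)}^2)$ over $\C^n$, the result is $C\exp(-Q(w_1))$ where $Q(w_1)=\min_{z}\bigl(c\abs{z}^2+c\abs{w_1-y_c(z)}^2\bigr)$, and this minimum is a nonnegative quadratic form in $w_1$ that vanishes only when $w_1=0$ (if $Q(w_1)=0$ then $z=0$ and $w_1=y_c(0)=0$). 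So $Q(w_1)\ge c'\abs{w_1}^2$ and the bound (\ref{Gelfand-Shilov estimate}) follows as you claim; just replace the incorrect injectivity assertion by this observation.

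The converse direction is correctly conceived. The contour shift $y\mapsto y+i\lambda\xi_c(z)$ is exactly right, and your identification of $(y_c(z),-\xi_c(z))$ with $(\kappa^\flat_\varphi)^{-1}(z)$ gives the equivalence $\abs{y_c(z)}^2+\abs{\xi_c(z)}^2\asymp\abs{z}^2$ needed to close the estimate. The quadratic bookkeeping you worry about is indeed routine: expanding $\varphi(z,y+i\lambda\xi_c(z))$ and writing $\textrm{Re}\,\p_y\varphi(z,y)=\xi_c(z)+\textrm{Re}\,D\,(y-y_c(z))$ makes all cross terms visibly $O(\lambda)$ or $O(\lambda^2)$ times quantities already controlled, so a single small $\lambda$ suffices uniformly in $z$.
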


A function $u \in C^\infty(\R^n)$ that extends to an entire analytic function on $\C^n$ and whose extension satisfies the estimate (\ref{Gelfand-Shilov estimate}) is known as a \emph{Gelfand-Shilov test function}. For more information on the Gelfand-Shilov space of test functions and its topological dual, we refer the reader to \cite{GelfandShilov}. 

Proposition \ref{empty analytic wavefront set} is significant because it shows that the $1/2$-Gelfand-Shilov wavefront of a distribution $u$ captures our intuition for what the set of global analytic singularities of a tempered distribution $u$ should be. Namely, for $\textrm{WF}^{1/2}(u)$ to be empty, $u$ must not only be real analytic, but also satisfy a Gaussian type decay estimate as $\abs{x} \rightarrow \infty$.

An elementary compactness argument also allows us to deduce a necessary and sufficient condition so that the relative $1/2$-Gelfand-Shilov wavefront set of $u \in H^{-\infty}_\Phi(\C^n)$ is empty.

\begin{proposition} \label{condition for Phi analytic wavefront to be empty}
Let $\Phi$ be a strictly plurisubharmonic quadratic form on $\C^n$. Suppose $u \in H^{-\infty}_\Phi(\C^n)$. Then $\textrm{WF}^{1/2}_\Phi(u) = \emptyset$ if and only if there exist $C,c>0$ such that
\begin{align*}
    \abs{u(z)} \le C e^{\Phi(z)-c\abs{z}^2}, \ \ \ z \in \C^n.
\end{align*}
\end{proposition}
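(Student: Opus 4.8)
The plan is to prove both implications, with the forward direction being the substantive one. The reverse implication is immediate: if the global bound $\abs{u(z)} \le C e^{\Phi(z) - c\abs{z}^2}$ holds for all $z \in \C^n$, then in particular it holds in an open conic neighborhood of every point $z_0 \in \C^n \backslash \{0\}$ (take $V = \C^n \backslash \{0\}$ itself), so no point lies in $\textrm{WF}^{1/2}_\Phi(u)$, and $\textrm{WF}^{1/2}_\Phi(u) = \emptyset$. For the forward direction, suppose $\textrm{WF}^{1/2}_\Phi(u) = \emptyset$. By definition, every point of the unit sphere $\sphere^{2n-1} = \set{z \in \C^n}{\abs{z} = 1}$ admits an open conic neighborhood $V_z$ and constants $C_z, c_z > 0$ such that $\abs{u(w)} \le C_z e^{\Phi(w) - c_z \abs{w}^2}$ for all $w \in V_z$. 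The sets $V_z \cap \sphere^{2n-1}$ form an open cover of the compact set $\sphere^{2n-1}$, so we extract a finite subcover $V_{z_1}, \ldots, V_{z_N}$; since these are conic, $\bigcup_{j=1}^N V_{z_j} \supseteq \C^n \backslash \{0\}$. Setting $C = \max_j C_{z_j}$ and $c = \min_j c_{z_j}$, we obtain $\abs{u(z)} \le C e^{\Phi(z) - c\abs{z}^2}$ for all $z \in \C^n \backslash \{0\}$, and the estimate extends to $z = 0$ by continuity (or by enlarging $C$). This handles the conic/compactness packaging with no real difficulty.

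The one point requiring slight care — and the only place an obstacle could hide — is that the conclusion must hold \emph{globally} on all of $\C^n$, not merely outside a bounded set, and with a single pair of constants; but since $u$ is entire and hence bounded on compact sets while $e^{\Phi(z) - c\abs{z}^2} \ge e^{-c\abs{z}^2}$ stays bounded below on any compact set, the finite-cover estimate already covers everything once we allow $C$ to absorb the behavior near the origin. There is no issue of matching up the constants across charts since we only ever work with one FBI phase and one weight $\Phi$ here — this is purely a statement about entire functions of a given exponential type, so no FBI machinery or results from Section 2 beyond the bare definitions are needed.

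In short, the proof is an elementary compactness argument exactly as signaled in the text preceding the statement (``An elementary compactness argument also allows us to deduce\ldots''), and I expect no genuine obstacle: the forward direction is a finite-subcover argument on $\sphere^{2n-1}$, the reverse direction is trivial, and the only bookkeeping is making sure the single global estimate with uniform $C, c$ is legitimate, which follows because the exceptional compact region near $0$ contributes only a bounded correction that can be absorbed into $C$.
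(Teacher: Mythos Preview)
Your proposal is correct and is exactly the elementary compactness argument the paper alludes to; in fact the paper does not write out a proof of this proposition at all, merely stating that it follows from ``an elementary compactness argument,'' which is precisely the finite-subcover-of-$\sphere^{2n-1}$ reasoning you have supplied.
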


Combining Proposition \ref{empty analytic wavefront set} with Proposition \ref{condition for Phi analytic wavefront to be empty} gives

\begin{corollary}
Suppose $u \in \mathcal{S}'(\R^n)$. Then $u$ is a Gelfand-Shilov test function if and only if there is an FBI phase $\varphi$ with associated weight $\Phi$ and constants $C, c>0$ such that
\begin{align} \label{FBI side Gelfand-Shilov estimate}
    \abs{\mathcal{T}_\varphi u(z)} \le C e^{\Phi(z)-c \abs{z}^2}, \ \ \ z \in \C^n.
\end{align}
If $u \in \mathcal{S}'(\R^n)$ and $\mathcal{T}_\varphi u$ satisfies (\ref{FBI side Gelfand-Shilov estimate}), then for every other choice of FBI phase $\varphi_1$ with associated weight $\Phi_1$, the function $\mathcal{T}_{\varphi_1} u$ satisfies the estimate (\ref{FBI side Gelfand-Shilov estimate}) for a potentially different choice of constants $C, c>0$ and with $\Phi$ replaced by $\Phi_1$.
\end{corollary}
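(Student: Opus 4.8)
The plan is to obtain the corollary as a purely formal consequence of Propositions~\ref{empty analytic wavefront set} and~\ref{condition for Phi analytic wavefront to be empty}, linked by the dictionary---recorded in the discussion following Definition~\ref{relative analytic wavefront set}---between $\textrm{WF}^{1/2}(u)$ and the relative wavefront set $\textrm{WF}^{1/2}_\Phi(\mathcal{T}_\varphi u)$ of the FBI transform of $u$. No new analysis is needed; the content is entirely the concatenation of equivalences already at our disposal.

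First I would fix an FBI phase $\varphi$ with associated strictly plurisubharmonic weight $\Phi$ and complex canonical transformation $\kappa_\varphi$. By Proposition~\ref{image_of_FBI_transform}, $\mathcal{T}_\varphi u \in H^{-\infty}_\Phi(\C^n)$, so Proposition~\ref{condition for Phi analytic wavefront to be empty} applies to $\mathcal{T}_\varphi u$ and shows that $\textrm{WF}^{1/2}_\Phi(\mathcal{T}_\varphi u) = \emptyset$ if and only if the global bound~(\ref{FBI side Gelfand-Shilov estimate}) holds for this choice of $\varphi$. Next, from the reformulation recorded after Definition~\ref{relative analytic wavefront set}, a point $(x_0,\xi_0) \in \R^{2n}\backslash\{(0,0)\}$ belongs to $\textrm{WF}^{1/2}(u)$ exactly when $\kappa^\flat_\varphi(x_0,\xi_0) \in \textrm{WF}^{1/2}_\Phi(\mathcal{T}_\varphi u)$, and this criterion does not depend on the choice of FBI phase; since $\kappa^\flat_\varphi : \R^{2n} \to \C^n$ is a linear isomorphism (see the discussion after~(\ref{first definition of kappa flat})), it maps $\R^{2n}\backslash\{(0,0)\}$ bijectively onto $\C^n\backslash\{0\}$, and it follows that $\textrm{WF}^{1/2}(u) = \emptyset$ if and only if $\textrm{WF}^{1/2}_\Phi(\mathcal{T}_\varphi u) = \emptyset$. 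In particular, the validity of~(\ref{FBI side Gelfand-Shilov estimate}) is itself independent of $\varphi$.

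Finally, Proposition~\ref{empty analytic wavefront set} identifies the condition $\textrm{WF}^{1/2}(u) = \emptyset$ with the statement that $u$ extends to a holomorphic function on $\C^n$ satisfying~(\ref{Gelfand-Shilov estimate}), i.e. that $u$ is a Gelfand-Shilov test function. Concatenating this with the two equivalences of the previous paragraph proves the first assertion. The phase-independence established along the way gives the closing sentence: if~(\ref{FBI side Gelfand-Shilov estimate}) holds for one FBI phase $\varphi$ then $\textrm{WF}^{1/2}(u) = \emptyset$, and rerunning the chain with any other FBI phase $\varphi_1$ (with weight $\Phi_1$) shows that $\mathcal{T}_{\varphi_1} u$ satisfies~(\ref{FBI side Gelfand-Shilov estimate}) with $\Phi$ replaced by $\Phi_1$ and possibly different constants $C,c>0$.

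I do not expect any genuine obstacle: the corollary is bookkeeping on top of the two cited propositions and the $\textrm{WF}^{1/2}$/$\textrm{WF}^{1/2}_\Phi$ correspondence. The only points requiring a little care are verifying that $\mathcal{T}_\varphi u$ really lies in $H^{-\infty}_\Phi(\C^n)$---so that Proposition~\ref{condition for Phi analytic wavefront to be empty} is applicable, which is exactly Proposition~\ref{image_of_FBI_transform}---and using the bijectivity of $\kappa^\flat_\varphi$ to transfer the emptiness of $\textrm{WF}^{1/2}(u) \subset \R^{2n}\backslash\{(0,0)\}$ to that of $\textrm{WF}^{1/2}_\Phi(\mathcal{T}_\varphi u) \subset \C^n\backslash\{0\}$, and back.
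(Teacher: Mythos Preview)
Your proposal is correct and follows exactly the approach the paper takes: the paper simply states that the corollary is obtained by combining Proposition~\ref{empty analytic wavefront set} with Proposition~\ref{condition for Phi analytic wavefront to be empty}, and you have spelled out this combination carefully, including the justification via Proposition~\ref{image_of_FBI_transform} and the bijectivity of $\kappa^\flat_\varphi$ that the paper leaves implicit.
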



\section{Bergman Representations of Metaplectic Fourier Integral Operators}

In this section we consider metaplectic Fourier integral operators and their action on exponentially weighted spaces of entire functions. We begin with a formal discussion. A metaplectic Fourier integral operator is an operator of the form
\begin{align} \label{Formal FIO}
    G u(z) = a \iint e^{i \phi(z,w,\theta)} u(w) \, dw \wedge d\theta, \ \ z \in \C^n,
\end{align}
where $a \in \C$ is a constant, $\phi(z,w,\theta)$ is a holomorphic quadratic form on $\C^{2n+N} = \C^n_z \times \C^n_w \times \C^N_\theta$. We assume that $\phi$ is a non-degenerate phase function in the sense of H\"{o}rmander \cite{FIO1}, i.e.
\begin{align} \label{nondegeneracy_condition}
    d \p_{\theta_1} \phi, . . . , d \p_{\theta_N} \phi \ \textrm{are linearly independent over $\C$}.
\end{align}
Let
\begin{align*}
    C_\phi = \set{(z,w,\theta) \in \C^{2n+N}}{\p_\theta \phi(z,w,\theta)=0}
\end{align*}
be the \emph{critical set of $\phi$}. Since $\p_\theta \phi$ is a $\C$-linear function of $(z,w,\theta) \in \C^{2n+N}$, the critical set of $\phi$ is a linear subspace of $\C^{2n+N}$, and the non-degeneracy of $\phi$ implies that $\dim_\C C_\phi = 2n$. We associate to $G$ the complex canonical relation $\kappa \subset \C^{2n} \times \C^{2n}$ given implicitly by
\begin{align} \label{canonical transformation associated to metaplectic FIO}
    \kappa : (w, -\p_w \phi (z,w,\theta)) \mapsto (z, \p_z \phi (z,w,\theta)), \ \ \ (z,w,\theta) \in C_\phi.
\end{align}
In the sequel, we shall always assume that $\kappa$ is the graph of a complex linear canonical transformation $\C^{2n} \rightarrow \C^{2n}$. In this situation, we say that $G$ \emph{quantizes $\kappa$} or that \emph{$\kappa$ is the underlying canonical transformation of $G$.}

We will now discuss how the formal Fourier integral operator (\ref{Formal FIO}) may be realized as a bounded linear operator between exponentially weighted spaces of entire functions by making an appropriate choice of the contour of integration. Let $\Phi_1$ and $\Phi_2$ be strictly plurisubharmonic quadratic forms on $\C^n$ and let $H_{\Phi_1}(\C^n)$ and $H_{\Phi_2}(\C^n)$ be their associated Bargmann spaces. Suppose that
\begin{align} \label{mapping of I lagrangians for FIO}
    \kappa(\Lambda_{\Phi_{2}}) = \Lambda_{\Phi_{1}},
\end{align}
where $\Lambda_{\Phi_1}$ and $\Lambda_{\Phi_2}$ are as in (\ref{IR subspace associated to Phi}) with $\Phi$ replaced by $\Phi_1$ and $\Phi_2$, respectively. Following \cite{PTSymmetric} Appendix B, the plurisubharmonic quadratic form
\begin{align*}
    \C^n \times \C^N \ni (w,\theta) \mapsto -\textrm{Im} \ \phi(0,w,\theta) + \Phi_{2}(w)
\end{align*}
is non-degenerate of signature $(n+N, n+N)$. Then, following either Proposition B.3 of \cite{PTSymmetric} or the general theory of \cite{AnalyticMicrolocal_Analysis}, we may conclude that there exists a real, smooth, $(n+N)$-dimensional contour $\Gamma(z)$ in $\C^{n+N}$, depending smoothly on $z \in \C^n$, such that $Gu$, when equipped with $\Gamma(z)$,  is well-defined as an element of $H_{\Phi_{1}}(\C^n)$ for $u \in H_{\Phi_{2}}(\C^n)$ and that (\ref{Formal FIO}) defines a bounded linear transformation
\begin{align} \label{metaplectic_FIO_is_bounded}
    G: H_{\Phi_{2}}(\C^n) \rightarrow H_{\Phi_{1}}(\C^n).
\end{align}

Next, we recount some recent results from \cite{ComplexFIOs} concerning the Bergman representation of a metaplectic Fourier integral operator $G$ whose underlying complex canonical transformation $\kappa$ satisfies (\ref{mapping of I lagrangians for FIO}). The following proposition summarizes the main results that we shall need. Recall that if $\Phi(z)$ is a strictly plurisubharmonic quadratic form on $\C^n$, then the polarization $\Psi(z,\theta)$ of $\Phi(z)$ is the unique holomorphic quadratic form on $\C^{2n} = \C^n_z \times \C^n_\theta$ such that $\Psi(z,\overline{z}) = \Phi(z)$ for all $z \in \C^n$.

\begin{proposition} \label{main imported proposition}
Let $G$, $\Phi_1$, $\Phi_2$, and $\kappa$ be as above. Let
\begin{align} \label{definition of pr's}
    \textrm{pr}_{\Phi_j} = \pi_1|_{\Lambda_{\Phi_j}}, \ \ \ j=1,2,
\end{align}
and let $\kappa^\flat$ be the $\R$-linear isomorphism
\begin{align} \label{definition of kappa flat}
    \kappa^\flat = \textrm{pr}_{\Phi_1} \circ \kappa \circ (\textrm{pr}_{\Phi_2})^{-1}: \C^n \rightarrow \C^n.
\end{align}
Then $G$ may be written uniquely in the form
\begin{align} \label{definition of G}
    Gu(z) = \hat{a} \int_{\C^n} e^{2 \Psi(z,\overline{w})} u(w) e^{-2\Phi_2(w)} \, L(dw), \ \ \ u \in H_{\Phi_2}(\C^n),
\end{align}
where $\hat{a} \in \C$ is a constant and $\Psi(z,\theta)$ is a holomorphic quadratic form on $\C^{2n} = \C^n_z \times \C^n_\theta$, depending only on $\kappa$, $\Phi_1$, and $\Phi_2$, having the following properties:
\begin{enumerate}
    \item if $\Psi_{2}(z, \theta)$ denotes the polarization of $\Phi_2$, then
    \begin{align}
        \phi(z,w,\theta) = \frac{2}{i} \Psi(z, \theta) - \frac{2}{i} \Psi_2(w,\theta), \ \ \ (z,w,\theta) \in \C^{3n},
    \end{align}
    is a non-degenerate holomorphic phase function generating $\textrm{graph}(\kappa)$:
    \begin{align}
        \kappa: \left(w, \frac{2}{i} \p_w \Psi_{2}(w,\theta) \right) \mapsto \left(z, \frac{2}{i} \p_z \Psi(z,\theta) \right), \ \ \ \p_\theta \Psi(z,\theta) = \p_\theta \Psi_{2}(w,\theta);
    \end{align}
    \item The real part of $\Psi(z,\theta)$ satisfies
    \begin{align} \label{quadratic form identity}
        2 \textrm{Re} \ \Psi(z,\theta) = \Phi_{1}(z)+\Phi_{2}(\overline{\theta})-R(z, \theta), \ \ \ z,\theta \in \C^n,
    \end{align}
    where $R(z,\theta)$ a non-negative quadratic form on $\C^{2n} = \C_z^n \times \C_\theta^n$ such that
    \begin{align} \label{main big O identity}
        c \abs{z-\kappa^\flat(\overline{\theta})}^2 \le R(z,\theta) \le C \abs{z-\kappa^\flat(\overline{\theta})}^2, \ \ z,\theta \in \C^n,
    \end{align}
    for some $C,c>0$.
\end{enumerate}
\end{proposition}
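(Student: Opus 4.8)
The plan is to produce the Bergman form by a direct Gaussian computation, read the two listed properties off the resulting phase, and settle uniqueness using the duality of the spaces $H^s_\Phi(\C^n)$ from Section 2. Since $G$ maps into $H_{\Phi_1}(\C^n)$, on which $\Pi_{\Phi_1}$ is the identity, and $\Pi_{\Phi_2}$ is the identity on $H_{\Phi_2}(\C^n)$, one may write $G = \Pi_{\Phi_1}\circ G \circ \Pi_{\Phi_2}$ on $H_{\Phi_2}(\C^n)$. I would then insert the Gaussian kernels (\ref{definition_Bergman_projector}) for the two Bergman projectors and the oscillatory integral (\ref{Formal FIO}) for $G$, and carry out the intermediate integrations — over the Bergman variables and over the $\theta$-variables of (\ref{Formal FIO}) — by contour deformation and exact quadratic stationary phase. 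This is legitimate because every phase occurring is a holomorphic quadratic form and the real parts of the relevant weights are strictly plurisubharmonic, so the quadratic forms controlling convergence are non-degenerate of the correct signature; the required deformations are the one already used in (\ref{metaplectic_FIO_is_bounded}) to realize $G$ as a bounded operator $H_{\Phi_2}(\C^n)\to H_{\Phi_1}(\C^n)$, together with those for $\Pi_{\Phi_1}$ and $\Pi_{\Phi_2}$. The outcome is a formula $Gu(z) = \int_{\C^n}\mathcal{K}(z,w)u(w)e^{-2\Phi_2(w)}\,L(dw)$ with $\mathcal{K}$ the exponential of a quadratic polynomial; using the reproducing property of $H_{\Phi_2}(\C^n)$ one checks $\mathcal{K}(z,w) = G[\,K_{\Phi_2}(\cdot,w)\,](z)$ with $K_{\Phi_2}(z,w) = C_{\Phi_2}e^{2\Psi_2(z,\overline w)}$, so $\mathcal{K}$ is holomorphic in $z$ and conjugate-holomorphic in $w$, with no linear terms in the exponent. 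Hence $\mathcal{K}(z,w) = \hat a\,e^{2\Psi(z,\overline w)}$ for a constant $\hat a\in\C$ and a holomorphic quadratic form $\Psi$ on $\C^{2n}$, both given by the usual Schur-complement formulas in terms of $\phi$, $\Phi_1$, $\Phi_2$; this is the existence of the Bergman form.

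For property (1), note that on the anti-diagonal $\Gamma = \set{(w,\theta)}{\theta = \overline w}$ one has $e^{2\Psi(z,\overline w)}e^{-2\Phi_2(w)} = e^{i\phi(z,w,\theta)}\big|_\Gamma$ with $\phi(z,w,\theta) = \tfrac{2}{i}\Psi(z,\theta)-\tfrac{2}{i}\Psi_2(w,\theta)$, so the Bergman form equals $\hat a\iint_\Gamma e^{i\phi}u\,dw\wedge d\theta$ up to the constant relating $dw\wedge d\overline w$ to $L(dw)$. The strict plurisubharmonicity of $\Phi_2$ makes $\p^2_{w\theta}\Psi_2$ invertible, so $\phi$ is a non-degenerate phase function in the sense of H\"ormander; computing its canonical relation from (\ref{canonical transformation associated to metaplectic FIO}) gives exactly the relation displayed in property (1). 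Since $\phi$ is a non-degenerate phase for the operator $G$, this relation must coincide with the underlying canonical transformation $\kappa$ of $G$, which proves (1).

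For property (2), set $R(z,\theta) = \Phi_1(z)+\Phi_2(\overline\theta)-2\,\textrm{Re}\,\Psi(z,\theta)$, a real quadratic form on $\C^{2n}$. The boundedness of $G:H_{\Phi_2}(\C^n)\to H_{\Phi_1}(\C^n)$, tested on coherent states and paired with a Schur estimate (exactly as in the proof that $\Pi_\Phi$ is bounded), forces $|\mathcal{K}(z,w)|\le Ce^{\Phi_1(z)+\Phi_2(w)}$; homogeneity of degree two lets one drop $C$, so $R(z,\overline w)\ge 0$, and since $\theta\mapsto\overline\theta$ is onto, $R\ge 0$ on all of $\C^{2n}$. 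To identify $\textrm{Rad}(R)$ I would combine property (1) with the hypothesis $\kappa(\Lambda_{\Phi_2})=\Lambda_{\Phi_1}$: for fixed $\theta$ and $w_0 = \overline\theta$, the point $(w_0,\tfrac{2}{i}\p_w\Phi_2(w_0))\in\Lambda_{\Phi_2}$ is carried by $\kappa$ to $(z_0,\tfrac{2}{i}\p_z\Phi_1(z_0))\in\Lambda_{\Phi_1}$ with $z_0=\kappa^\flat(\overline\theta)$; feeding this through the generating relations of (1) yields $\p_\theta\Psi(z_0,\theta)=\p_\theta\Psi_2(w_0,\theta)$ and $\p_z\Psi(z_0,\theta)=\p_z\Phi_1(z_0)$, and Euler's identity for the quadratic form $\Psi$ then gives $2\,\textrm{Re}\,\Psi(z_0,\theta)=\Phi_1(z_0)+\Phi_2(\overline\theta)$, i.e. $R$ vanishes on the real $2n$-dimensional subspace $\{z=\kappa^\flat(\overline\theta)\}$. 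A dimension count (using the non-degeneracy of $\phi$) shows $\textrm{Rad}(R)$ can be no larger, so $\textrm{Rad}(R)=\{z=\kappa^\flat(\overline\theta)\}$; two non-negative quadratic forms with the same radical are comparable, which gives (\ref{main big O identity}). I expect this to be the main obstacle: establishing simultaneously that the positive part of $2\,\textrm{Re}\,\Psi$ is \emph{exactly} $\Phi_1(z)+\Phi_2(\overline\theta)$ and that the radical of $R$ is \emph{precisely} the graph of $\kappa^\flat$ is where the geometric hypothesis on $\kappa$ and the generating-function structure must be combined, and where the verification descends to an elementary but lengthy computation with the symmetric matrices of $\Psi$, $\Phi_1$, $\Phi_2$ and the Schur complements of $\phi$.

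Finally, uniqueness is cheap. If $\hat a_1 e^{2\Psi^{(1)}(z,\overline w)}$ and $\hat a_2 e^{2\Psi^{(2)}(z,\overline w)}$ both represent $G$ on $H_{\Phi_2}(\C^n)$, then for each fixed $z$ the conjugate of the difference lies in $H^{-\infty}_{\Phi_2}(\C^n)$ (by the growth bound from (\ref{main big O identity})) and induces the zero functional on the dense subspace $H^\infty_{\Phi_2}(\C^n)\subset H_{\Phi_2}(\C^n)$; by Proposition \ref{dual_space_proposition} it vanishes, so $\hat a_1 e^{2\Psi^{(1)}(z,\overline w)}=\hat a_2 e^{2\Psi^{(2)}(z,\overline w)}$ identically, and evaluating at $z=w=0$ gives $\hat a_1=\hat a_2$ and hence $\Psi^{(1)}=\Psi^{(2)}$. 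That $\Psi$ depends only on $\kappa$, $\Phi_1$, $\Phi_2$ and not on the particular operator $G$ follows from property (1): once $\Phi_2$ (hence $\Psi_2$) and $\kappa$ are fixed, there is at most one holomorphic quadratic form $\Psi$ for which $\tfrac{2}{i}\Psi(z,\theta)-\tfrac{2}{i}\Psi_2(w,\theta)$ generates $\textrm{graph}(\kappa)$, so any remaining ambiguity among operators quantizing $\kappa$ is absorbed into the constant $\hat a$.
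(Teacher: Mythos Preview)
The paper does not prove Proposition~\ref{main imported proposition}: it is stated as a summary of results imported from \cite{ComplexFIOs}, with no argument given in this paper. So there is no ``paper's own proof'' to compare against.

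Your sketch is a plausible route to an independent proof, and the overall architecture is sound: compose with the Bergman projectors, perform the Gaussian integrations to obtain a kernel of the form $\hat a\,e^{2\Psi(z,\overline w)}$, then read off (1) from the generating-function formalism and (2) from a positivity/radical computation, with uniqueness coming from Proposition~\ref{dual_space_proposition}. Two places deserve more care. First, in your derivation of $R\ge 0$, the step ``homogeneity of degree two lets one drop $C$'' is correct, but the preceding bound $|\mathcal K(z,w)|\le C e^{\Phi_1(z)+\Phi_2(w)}$ should be justified precisely: it follows from $\mathcal K(z,w)=G[K_{\Phi_2}(\cdot,w)](z)$, the norm bound $\|K_{\Phi_2}(\cdot,w)\|_{H_{\Phi_2}}\le C e^{\Phi_2(w)}$, boundedness of $G$, and the pointwise estimate (\ref{second_bound_in_lemma}). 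Second, the sentence ``A dimension count (using the non-degeneracy of $\phi$) shows $\textrm{Rad}(R)$ can be no larger'' is not really a dimension count and, as written, is a gap. The clean way to get the reverse inclusion is to note that any $(z_0,\theta_0)\in\textrm{Rad}(R)$ is a critical point of the nonnegative form $R$, so $\p_z\Phi_1(z_0)=\p_z\Psi(z_0,\theta_0)$ and $\p_\theta\Psi_2(\overline{\theta_0},\theta_0)=\p_\theta\Psi(z_0,\theta_0)$; the second identity places $(z_0,\overline{\theta_0},\theta_0)$ on the critical set of $\phi$, and then property~(1) together with the first identity and $\kappa(\Lambda_{\Phi_2})=\Lambda_{\Phi_1}$ force $z_0=\kappa^\flat(\overline{\theta_0})$. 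With those two points tightened, your argument should go through.
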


If $G$ is a metaplectic Fourier integral operator satisfying (\ref{metaplectic_FIO_is_bounded}), then we refer to (\ref{definition of G}) as the \emph{Bergman form of $G$}.

\begin{example} \label{identity_example}
Consider the formal Fourier integral operator
\begin{align} \label{example_of_identity}
	Gu(z) = \frac{1}{(2\pi)^n} \iint e^{i(z-w) \cdot \theta} u(w) \, dw \wedge d\theta, \ \ z \in \C^n.
\end{align}
The phase function
\begin{align}
	\phi(z,w,\theta) = (z-w) \cdot \theta, \ \ (z,w, \theta) \in \C^{3n},
\end{align}
is easily seen to satisfy H\"{o}rmander's non-degeneracy condition (\ref{nondegeneracy_condition}). A direct computation shows that the complex linear canonical transformation $\kappa: \C^{2n} \rightarrow \C^{2n}$ generated by $\phi$ is the identity
\begin{align} \label{identity canonical transformation}
	\kappa(z,\zeta) = (z,\zeta), \ \ (z,\zeta) \in \C^{2n}.
\end{align}
If $\Phi$ is any strictly plurisubharmonic quadratic form on $\C^n$, the formal Fourier integral operator (\ref{example_of_identity}) may realized as a bounded linear transformation $H_\Phi(\C^n) \rightarrow H_\Phi(\C^n)$ by integrating over the contour
\begin{align} \label{good_contour_identity}
	\Gamma(z): w \mapsto \theta = \frac{2}{i} \p_z \Phi(z) + i C \overline{(z-w)}, \ \ w \in \C^n,
\end{align}
where $C\gg0$ is sufficiently large. By the `complex Fourier inversion theorem' (see the proof of Proposition 1.3.4 in \cite{Minicourse} or the proof of Theorem 13.6 in \cite{SemiclassicalAnalysis}),
\begin{align}
	Gu = u
\end{align}
for all $u \in H_\Phi(\C^n)$. After an appropriate contour deformation and $\C$-linear change of variables (again, see the proof of Proposition 1.3.4 in \cite{Minicourse} or the proof of Theorem 13.6 in \cite{SemiclassicalAnalysis}), the operator (\ref{example_of_identity}) may be rewritten as
\begin{align} \label{Bergman_form_of_identity}
	Gu(z) = C_\Phi \int_{\C^n} e^{2\Psi(z,\overline{w})} u(w) e^{-2\Phi(w)} \, L(dw), \ \ u \in H_\Phi(\C^n),
\end{align}
where $\Psi(\cdot, \cdot)$ is the polarization of $\Phi$ and $C_\Phi = (2/\pi)^n \det{ \p^2_{z\overline{z}} \Phi}$. In other words, the Bergman form of $G$ that is guaranteed to exist by Proposition \ref{main imported proposition} is precisely the Bergman projector (\ref{definition_Bergman_projector}) associated to the weight $\Phi$. One may rewrite (\ref{Bergman_form_of_identity}) in the form
\begin{align}
	Gu(z) = \widetilde{C}_\Phi \iint_\Gamma e^{2 \Psi(z,\theta)-2 \Psi(w,\theta)} u(w) \, dw \wedge d\theta, \ \ u \in H_\Phi(\C^n),
\end{align}
where $\widetilde{C}_\Phi = (i/2)^n C_\Phi$ and the contour of integration is the anti-diagonal
\begin{align}
	\Gamma = \set{(w,\theta) \in \C^{2n}}{\theta = \overline{w}}.
\end{align}
The strict plurisubharmonicity of $\Phi$ implies that the phase function
\begin{align} \label{Bergman phase function}
	\frac{2}{i} \Psi(z,\theta)-\frac{2}{i} \Psi(w,\theta), \ \ (z,w,\theta) \in \C^{3n},
\end{align}
satisfies the non-degeneracy condition (\ref{nondegeneracy_condition}), and one may easily verify that (\ref{Bergman phase function}) generates the identity map (\ref{identity canonical transformation}).

\end{example}

Using Proposition \ref{main imported proposition}, we can give a simple proof that metaplectic Fourier integral operators extend uniquely to bounded linear transformations on $H^s_{\Phi}(\C^n)$ for every $s \in \R$.

\begin{proposition} \label{mapping properties of metaplectic FIO}
	Let $\Phi_1$ and $\Phi_2$ be strictly plurisubharmonic quadratic forms on $\C^n$ and let $\kappa: \C^{2n} \rightarrow \C^{2n}$ be a complex linear canonical transformation such that $\kappa(\Lambda_{\Phi_{2}}) = \Lambda_{\Phi_{1}}$. If $G: H_{\Phi_{2}}(\C^n) \rightarrow H_{\Phi_{1}}(\C^n)$ is a metaplectic Fourier integral operator quantizing $\kappa$, then $G$ extends uniquely to a bounded linear transformation
	\begin{align}
		G: H^s_{\Phi_{2}}(\C^n) \rightarrow H^s_{\Phi_{1}}(\C^n)
	\end{align}
	for every $s \in \R$. Consequently, $G$ restricts to a continuous linear transformation $H^\infty_{\Phi_{2}}(\C^n) \rightarrow H^\infty_{\Phi_1}(\C^n)$ and extends uniquely to a continuous linear transformation $H^{-\infty}_{\Phi_{2}}(\C^n) \rightarrow H^{-\infty}_{\Phi_{1}}(\C^n)$.
\end{proposition}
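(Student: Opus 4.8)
The plan is to use the Bergman form of $G$ furnished by Proposition \ref{main imported proposition} (whose hypotheses are precisely those assumed here) to reduce the asserted mapping properties to a single application of Schur's lemma on the weighted $L^2$-spaces $L^2_{\Phi_j,s}(\C^n)$, and then to transfer the conclusion to $H^\infty_{\Phi_j}(\C^n)$ and $H^{-\infty}_{\Phi_j}(\C^n)$ using the density statement of Proposition \ref{density_proposition} together with the definitions of the corresponding Fr\'echet and inductive-limit topologies. Writing the Bergman form of $G$ as in (\ref{definition of G}),
\begin{align}
	Gu(z) = \hat{a} \int_{\C^n} e^{2 \Psi(z,\overline{w})} u(w) e^{-2\Phi_2(w)} \, L(dw), \qquad u \in H_{\Phi_2}(\C^n),
\end{align}
and substituting $\theta = \overline{w}$ into (\ref{quadratic form identity}) and (\ref{main big O identity}) (so that $\overline{\theta}=w$), one obtains
\begin{align} \label{plan_realpart}
	2\,\textrm{Re}\ \Psi(z,\overline{w}) = \Phi_1(z) + \Phi_2(w) - R(z,\overline{w}), \qquad c\abs{z-\kappa^\flat(w)}^2 \le R(z,\overline{w}) \le C\abs{z-\kappa^\flat(w)}^2,
\end{align}
for all $z,w \in \C^n$, where $\kappa^\flat : \C^n \to \C^n$ is the $\R$-linear isomorphism (\ref{definition of kappa flat}).

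Next I would fix $s \in \R$ and establish the estimate $\norm{Gu}_{H^s_{\Phi_1}(\C^n)} \le C_s\norm{u}_{H^s_{\Phi_2}(\C^n)}$ for $u \in H^\infty_{\Phi_2}(\C^n)$. Multiplying the Bergman form by $\langle z\rangle^s e^{-\Phi_1(z)}$ and inserting the identity in (\ref{plan_realpart}) gives
\begin{align} \label{plan_pointwise}
	\langle z\rangle^s \abs{Gu(z)} e^{-\Phi_1(z)} \le \abs{\hat{a}} \int_{\C^n} \langle z\rangle^s e^{-R(z,\overline{w})} \abs{u(w)} e^{-\Phi_2(w)} \, L(dw).
\end{align}
Since $\kappa^\flat$ is a linear isomorphism, $\langle\kappa^\flat(w)\rangle \asymp \langle w\rangle$; combining this with $\langle z\rangle \le \langle z-\kappa^\flat(w)\rangle\langle\kappa^\flat(w)\rangle$ and the analogous inequality with $z$ and $\kappa^\flat(w)$ interchanged (to cover $s<0$) produces a constant $C>0$ with $\langle z\rangle^s \le C\langle z-\kappa^\flat(w)\rangle^{\abs{s}}\langle w\rangle^s$ for all $z,w$. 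Using this together with the lower bound $R(z,\overline{w}) \ge c\abs{z-\kappa^\flat(w)}^2$ in (\ref{plan_pointwise}), the right-hand side is bounded by the integral of $K_s(z,w)\langle w\rangle^s\abs{u(w)} e^{-\Phi_2(w)}$ against $L(dw)$, where $K_s(z,w) = C\langle z-\kappa^\flat(w)\rangle^{\abs{s}} e^{-c\abs{z-\kappa^\flat(w)}^2}$. Because $w \mapsto \kappa^\flat(w)$ is an $\R$-linear bijection of $\R^{2n}$ with constant nonzero Jacobian, a change of variables shows that both $\sup_z \int_{\C^n} K_s(z,w)\, L(dw)$ and $\sup_w \int_{\C^n} K_s(z,w)\, L(dz)$ are finite, so Schur's lemma yields the stated bound. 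The same pointwise kernel estimate together with the Cauchy--Schwarz inequality shows that the integral in the Bergman form converges absolutely for every $u \in H^s_{\Phi_2}(\C^n)$ and, by differentiation under the integral sign, defines an element of $\textrm{Hol}(\C^n)$; hence the Bergman form defines a bounded operator $H^s_{\Phi_2}(\C^n) \to H^s_{\Phi_1}(\C^n)$ that coincides with $G$ on the dense subspace $H^\infty_{\Phi_2}(\C^n)$ (Proposition \ref{density_proposition}) and is therefore its unique bounded extension.

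To conclude, observe that the bounded operators produced for the various $s \in \R$ all agree with the single Bergman-form integral, hence are mutually consistent. Since $H^\infty_{\Phi_j}(\C^n) = \bigcap_{s\in\R} H^s_{\Phi_j}(\C^n)$ with the Fr\'echet topology generated by the norms $\norm{\cdot}_s$, boundedness for every $s$ forces this common operator to restrict to a continuous map $H^\infty_{\Phi_2}(\C^n) \to H^\infty_{\Phi_1}(\C^n)$; and since $H^{-\infty}_{\Phi_j}(\C^n) = \bigcup_{s\in\R} H^s_{\Phi_j}(\C^n)$ carries the inductive-limit topology, it extends to a continuous map $H^{-\infty}_{\Phi_2}(\C^n) \to H^{-\infty}_{\Phi_1}(\C^n)$, uniqueness of which follows from the density of $H^\infty_{\Phi_2}(\C^n)$ in $H^{-\infty}_{\Phi_2}(\C^n)$. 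Given Proposition \ref{main imported proposition}, there is no serious obstacle in this argument; the only points requiring a little care are the transport of the weight $\langle z\rangle^s$ through the kernel when $s<0$ and the uniform row/column integrability of $K_s$, and both are handled by the fact that $\kappa^\flat$ is a genuine $\R$-linear isomorphism, so that $\langle\kappa^\flat(w)\rangle \asymp \langle w\rangle$ and the substitution $w \mapsto \kappa^\flat(w)$ is measure-preserving up to a constant factor.
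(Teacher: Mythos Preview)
Your proposal is correct and follows essentially the same route as the paper: write $G$ in Bergman form via Proposition \ref{main imported proposition}, use the identity (\ref{quadratic form identity})--(\ref{main big O identity}) together with the Peetre-type inequality $\langle z\rangle^s \le C\langle z-\kappa^\flat(w)\rangle^{\abs{s}}\langle w\rangle^s$ (valid for all $s\in\R$ since $\kappa^\flat$ is an $\R$-linear isomorphism) to bound the reduced kernel by $C\langle z-\kappa^\flat(w)\rangle^{\abs{s}} e^{-c\abs{z-\kappa^\flat(w)}^2}$, apply Schur's lemma, and then invoke Proposition \ref{density_proposition} and the Fr\'echet/inductive-limit topologies for the $H^\infty$ and $H^{-\infty}$ statements. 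The paper's proof is organized around the ``reduced kernel'' $K_{\textrm{red}}(z,w)=\langle z\rangle^s e^{-\Phi_1(z)}K(z,w)\langle w\rangle^{-s}e^{\Phi_2(w)}$, but the estimates and the logic are the same as yours.
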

\begin{proof}
Let $\kappa^\flat: \C^n \rightarrow \C^n$ be as in (\ref{definition of kappa flat}). By Proposition \ref{main imported proposition}, we may write $G$ uniquely in Bergman form as
\begin{align} \label{operator_I_am_going_to_reference}
	Gu(z) = \hat{a} \int_{\C^n} e^{2 \Psi(z, \overline{w})} u(w) e^{-2\Phi_{2}(w)} \, L(dw), \ \ u \in H_{\Phi_{2}}(\C^n),
\end{align}
where $\hat{a} \in \C$ and $\Psi(\cdot, \cdot)$ is a holomorphic quadratic form on $\C^{2n}$ such that
\begin{align} \label{Bergman_phase_estimate}
	2 \textrm{Re} \ \Psi(z,\overline{w}) - \Phi_{1}(z) - \Phi_{2}(w) \le -c \abs{z- \kappa^\flat(w)}^2, \ \ z , w \in \C^n,
\end{align}
for some $c>0$. Now (\ref{operator_I_am_going_to_reference}) may be rewritten as
\begin{align}
	Gu(z) = \int_{\C^n} K(z,w) u(w) \, L(dw), \ \ u \in H_{\Phi_{2}}(\C^n),
\end{align}
where
\begin{align}
	K(z,w) = \hat{a} e^{2 \Psi(z,\overline{w}) - 2 \Phi_{2}(w)}, \ \ z,w \in \C^n.
\end{align}
Let $s \in \R$ be arbitrary. To see that
\begin{align}
	G = \mathcal{O}(1): H^s_{\Phi_{2}}(\C^n) \rightarrow H^s_{\Phi_{1}}(\C^n),
\end{align}
we consider the reduced kernel
\begin{align}
	K_{\textrm{red}}(z,w) = \langle z \rangle^{s} e^{-\Phi_{1}(z)} K(z,w) \langle w \rangle^{-s} e^{\Phi_{2}(w)}, \ \ z, w \in \C^n.
\end{align}
Since $\kappa^\flat: \C^n \rightarrow \C^n$ is an invertible linear transformation, there are $C,c>0$ such that
\begin{align} \label{equivalence with canonical transformation}
	c \langle \kappa^\flat(w) \rangle \le \langle w \rangle \le C \langle \kappa^\flat(w) \rangle
\end{align}
for all $w \in \C^n$. From (\ref{Bergman_phase_estimate}) and (\ref{equivalence with canonical transformation}), we get that are $C,c>0$ such that
\begin{align}
	\abs{K_{\textrm{red}} (z,w)} \le C \langle z - \kappa^\flat(w) \rangle^{\abs{s}} e^{-c \abs{z-\kappa^\flat(w)}^2}, \ \ z,w \in \C^n.
\end{align}
Because
\begin{align}
	\sup_{z \in \C^n} \int_{\C^n} \abs{K_{\textrm{red}}(z,w)} \, L(dw) < \infty \ \ \textrm{and} \ \ \sup_{w \in \C^n} \int_{\C^n} \abs{K_{\textrm{red}}(z,w)} \, L(dz) < \infty,
\end{align}
Schur's lemma implies that the operator (\ref{operator_I_am_going_to_reference}) is $\mathcal{O}(1): H^s_{\Phi_{2}}(\C^n) \rightarrow H^s_{\Phi_{1}}(\C^n)$. As $H^\infty_{\Phi_{2}}(\C^n)$ is dense in $H^s_{\Phi_{2}}(\C^n)$ by Proposition \ref{density_proposition}, we conclude that $G$ extends uniquely to a bounded linear transformation $H^s_{\Phi_{2}}(\C^n) \rightarrow H^s_{\Phi_{1}}(\C^n)$. It follows immediately that $G$ restricts to a continuous linear transformation $H^\infty_{\Phi_{2}}(\C^n) \rightarrow H^\infty_{\Phi_{1}}(\C^n)$. To extend $G$ to $H^{-\infty}_{\Phi_{2}}(\C^n)$, we define $Gu$ for $u \in H^{-\infty}_{\Phi_{2}}(\C^n)$ using the formula (\ref{operator_I_am_going_to_reference}). To prove the continuity of $G: H^{-\infty}_{\Phi_{2}}(\C^n) \rightarrow H^{-\infty}_{\Phi_{1}}(\C^n)$ it suffices to show that for every $s \in \R$ the restriction of $G$ to $H^s_{\Phi_{2}}(\C^n)$ is continuous $H^s_{\Phi_{2}}(\C^n) \rightarrow H^{-\infty}_{\Phi_{1}}(\C^n)$ (see \cite{Conway} Chapter IV Proposition 5.7). But this is immediate since we have already established the continuity of $G: H^s_{\Phi_{2}}(\C^n) \rightarrow H^s_{\Phi_{1}}(\C^n)$ and the inclusion $H^s_{\Phi_{1}}(\C^n) \hookrightarrow H^{-\infty}_{\Phi_{1}}(\C^n)$ is continuous by definition of the topology on $H^{-\infty}_{\Phi_{1}}(\C^n)$. Since $H^{\infty}_{\Phi_{2}}(\C^n)$ is dense in $H^{-\infty}_{\Phi_{2}}(\C^n)$ by Proposition \ref{density_proposition}, it follows that $G$ extends uniquely to a continuous linear transformation $H^{-\infty}_{\Phi_{2}}(\C^n) \rightarrow H^{-\infty}_{\Phi_{1}}(\C^n)$.
\end{proof}


\section{Metaplectic Fourier Integral Operators and Propagation of $1/2$-Gelfand-Shilov Singularities}

Let $\C^{2n}$ be equipped with the standard complex symplectic form $\sigma$ and let $\Sigma$ be a maximally totally real subspace of $\C^{2n}$. Let $\iota_\Sigma$ be the unique antilinear involution of $\C^{2n}$ fixing $\Sigma$. Following the terminology of \cite{Minicourse}, we say that a $\C$-Lagrangian subspace $\Lambda$ of $\C^{2n}$ is \emph{positive relative to $\Sigma$} if
\begin{align} \label{defintion of positivity}
    \frac{1}{i} \sigma(X, \iota_\Sigma X) \ge 0, \ \ \ X \in \Sigma.
\end{align}
If equality holds in (\ref{defintion of positivity}) only when $X=0$, we say that $\Lambda$ is \emph{strictly positive relative to $\Sigma$}. 

One may extend the notion of positivity to complex linear canonical transformations of $\C^{2n}$. If $\kappa: \C^{2n} \rightarrow \C^{2n}$ is a complex linear canonical transformation and $\Sigma_1, \Sigma_2 \subset \C^{2n}$ are maximally totally real subspaces of $\C^{2n}$ with associated antilinear involutions $\iota_{\Sigma_1}$ and $\iota_{\Sigma_2}$, respectively, then we say that $\kappa$ is {\emph{positive relative to $(\Sigma_1, \Sigma_2)$}} if
\begin{align} \label{definition positivity canonical transformation}
    \frac{1}{i}\left(\sigma(\kappa(X), \iota_{\Sigma_1} \kappa(X))-\sigma(X, \iota_{\Sigma_2} X) \right) \ge 0, \ \ \ X \in \C^{2n}.
\end{align}
If the inequality in (\ref{definition positivity canonical transformation}) is strict for all $X \neq 0$, then $\kappa$ is said to be {\emph{strictly positive relative to $(\Sigma_1, \Sigma_2)$}}. In the case when $\kappa$ is positive, resp. strictly positive, relative to $(\Sigma_1, \Sigma_2)$ and $\Sigma_1 = \Sigma_2 = \Sigma$, then we simply say that \emph{$\kappa$ is positive, resp. strictly positive, relative to $\Sigma$.}

Let $\Phi_1$ and $\Phi_2$ be strictly plurisubharmonic quadratic forms on $\C^n$. In \cite{ComplexFIOs}, it was shown that a complex linear canonical transformation $\kappa: \C^{2n} \rightarrow \C^{2n}$ is positive relative to $(\Lambda_{\Phi_1}, \Lambda_{\Phi_2})$ if and only if
\begin{align}
    \kappa(\Lambda_{\Phi_2}) = \Lambda_{\Phi}
\end{align}
where $\Phi$ is a strictly plurisubharmonic quadratic form on $\C^{2n}$ such that $\Phi \le \Phi_1$. In particular, if $\Phi$ is a strictly plurisubharmonic quadratic form on $\C^n$ and $\kappa$ is positive relative to $\Lambda_{\Phi}$, then
\begin{align} \label{positive relative to a weight image characterization}
    \kappa(\Lambda_{\Phi}) = \Lambda_{\tilde{\Phi}}
\end{align}
for some strictly plurisubharmonic quadratic form $\tilde{\Phi}$ on $\C^n$ such that $\tilde{\Phi} \le \Phi$. In this case, there is a very useful characterization of the $I$-isotropic subspace $\Lambda_\Phi \cap \kappa(\Lambda_\Phi)$ in terms of $\Phi$ and $\tilde{\Phi}$. Namely, if (\ref{positive relative to a weight image characterization}) holds, then
\begin{align} \label{characterization of clean intersection}
    \pi_1(\Lambda_\Phi \cap \kappa(\Lambda_\Phi)) = \textrm{Rad}(\Phi-\tilde{\Phi}).
\end{align}
Indeed, since $\Phi-\tilde{\Phi}$ is a non-negative quadratic form, we have
\begin{align}
    \Phi(z)-\tilde{\Phi}(z) = 0 \iff \nabla_{\textrm{Re} \ z, \textrm{Im} \ z}(\Phi - \tilde{\Phi})(z) = 0
    &\iff \p_z (\Phi - \tilde{\Phi})(z) = 0.
\end{align}
Hence
\begin{align}
    \left(z, \frac{2}{i} \p_z \Phi(z) \right) = \left(z, \frac{2}{i} \p_z \tilde{\Phi}(z) \right) \iff z \in \textrm{Rad}(\Phi-\tilde{\Phi}).
\end{align}

Suppose $\kappa: \C^{2n} \rightarrow \C^{2n}$ is a complex linear canonical transformation that is positive relative to $\Lambda_\Phi$ and let $\tilde{\Phi}$ be as in (\ref{positive relative to a weight image characterization}). If $G$ is a metaplectic Fourier integral operator quantizing $\kappa$, then $G$ is a continuous linear transformation $H^{-\infty}_{\Phi}(\C^n) \rightarrow H^{-\infty}_{\tilde{\Phi}}(\C^n)$ by Proposition \ref{mapping properties of metaplectic FIO}. Since also $H^{-\infty}_{\tilde{\Phi}}(\C^n) \hookrightarrow H^{-\infty}_{\Phi}(\C^n)$ continuously, we may regard $G$ as a continuous linear transformation from $H^{-\infty}_{\Phi}(\C^n)$ to itself. Consequently, $\textrm{WF}^{1/2}_\Phi(Gu)$ is well-defined for any $u \in H^{-\infty}_\Phi(\C^n)$. 

We wish to explore the relationship between $\textrm{WF}^{1/2}_\Phi(u)$ and $\textrm{WF}^{1/2}_\Phi(Gu)$ when $u \in H^{-\infty}_\Phi(\C^n)$ and $G$ is a metaplectic Fourier integral operator whose underlying canonical transformation $\kappa$ is positive relative to $\Phi$. The next theorem shows that $G$ regularizes any $1/2$-Gelfand-Shilov singularities of $u$ that are outside of $\textrm{Rad}(\Phi-\tilde{\Phi})$ and transports those that lie within $\textrm{Rad}(\Phi-\tilde{\Phi})$ by $\kappa^\flat$, where $\kappa^\flat$ is as in (\ref{definition of kappa flat}).

\begin{theorem} \label{main theorem}
Let $\Phi$ be a strictly plurisubharmonic quadratic form on $\C^n$, let $\kappa: \C^{2n} \rightarrow \C^{2n}$ be a complex linear canonical transformation that is positive relative to $\Phi$, let $\tilde{\Phi}$ be as in (\ref{positive relative to a weight image characterization}), and let $\kappa^\flat: \C^n \rightarrow \C^n$ be the $\R$-linear isomorphism
\begin{align*}
    \kappa^\flat = \textrm{pr}_{\tilde{\Phi}} \circ \kappa \circ \textrm{pr}_{\Phi}^{-1} : \C^n \rightarrow \C^n,
\end{align*}
where $\textrm{pr}_\Phi$ and $\textrm{pr}_{\tilde{\Phi}}$ are the restrictions of $\pi_1$ to $\Lambda_\Phi$ and $\Lambda_{\tilde{\Phi}}$ respectively. If $G$ is a metaplectic Fourier integral operator quantizing $\kappa$, realized as a continuous linear transformation from $H^{-\infty}_\Phi(\C^n)$ to itself, then, for any $u \in H^{-\infty}_\Phi(\C^n)$, we have
\begin{align} \label{main wavefront propagation inclusion}
    \textrm{WF}^{1/2}_\Phi(Gu) \subset \kappa^\flat(\textrm{WF}^{1/2}_\Phi(u)) \cap \textrm{Rad}(\Phi-\tilde{\Phi}).
\end{align}
If, in addition, $G$ is non-zero and $\kappa(\Lambda_\Phi \cap \Lambda_{\tilde{\Phi}}) = \Lambda_\Phi \cap \Lambda_{\tilde{\Phi}}$, then
\begin{align} \label{main wavefront equality}
    \textrm{WF}^{1/2}_\Phi(Gu) = \kappa^\flat(\textrm{WF}^{1/2}_\Phi(u)) \cap \textrm{Rad}(\Phi-\tilde{\Phi})
\end{align}
for every $u \in H^{-\infty}_\Phi(\C^n)$.
\end{theorem}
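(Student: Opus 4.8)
The plan is to reduce the whole statement to Schur‑type estimates for the Bergman form of $G$ provided by Proposition \ref{main imported proposition}. Write $\kappa(\Lambda_\Phi)=\Lambda_{\tilde\Phi}$ with $\tilde\Phi\le\Phi$ as in (\ref{positive relative to a weight image characterization}) and realize $G$ as a metaplectic Fourier integral operator $H_\Phi(\C^n)\to H_{\tilde\Phi}(\C^n)$; by Proposition \ref{main imported proposition} (with $\Phi_1=\tilde\Phi$, $\Phi_2=\Phi$) it has Bergman kernel $\hat a\,e^{2\Psi(z,\overline w)-2\Phi(w)}$ with $2\,\textrm{Re}\,\Psi(z,\overline w)=\tilde\Phi(z)+\Phi(w)-R(z,\overline w)$ and $R\ge 0$, $R(z,\overline w)\asymp|z-\kappa^\flat(w)|^2$. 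Two auxiliary facts, both consequences of the same kind of estimate, will do the work. \emph{(i)} For any metaplectic Fourier integral operator $\Gamma\colon H_{\Phi_2}(\C^n)\to H_{\Phi_1}(\C^n)$ quantizing a canonical transformation $\mu$ with $\mu(\Lambda_{\Phi_2})=\Lambda_{\Phi_1}$ one has $\textrm{WF}^{1/2}_{\Phi_1}(\Gamma u)\subset\mu^\flat(\textrm{WF}^{1/2}_{\Phi_2}(u))$ for all $u\in H^{-\infty}_{\Phi_2}(\C^n)$, where $\mu^\flat=\textrm{pr}_{\Phi_1}\circ\mu\circ\textrm{pr}_{\Phi_2}^{-1}$. \emph{(ii)} If $\tilde\Phi\le\Phi$, then $\textrm{WF}^{1/2}_\Phi(v)=\textrm{WF}^{1/2}_{\tilde\Phi}(v)\cap\textrm{Rad}(\Phi-\tilde\Phi)$ for every $v\in H^{-\infty}_{\tilde\Phi}(\C^n)$.

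For (i): writing the kernel of $\Gamma$ in Bergman form $\hat\gamma\,e^{2\Psi_\Gamma(z,\overline w)-2\Phi_2(w)}$ with $2\,\textrm{Re}\,\Psi_\Gamma(z,\overline w)=\Phi_1(z)+\Phi_2(w)-R_\Gamma(z,\overline w)$, $R_\Gamma\ge 0$, $R_\Gamma(z,\overline w)\asymp|z-\mu^\flat(w)|^2$, and given $z_0$ with $w_0:=(\mu^\flat)^{-1}(z_0)\notin\textrm{WF}^{1/2}_{\Phi_2}(u)$, I would pick narrow conic neighbourhoods $V\ni z_0$, $W\ni w_0$ with $V\subset\subset\mu^\flat(W)$ so that $|z-\mu^\flat(w)|\gtrsim|z|+|w|$ for $z\in V$, $w\notin W$, split the kernel integral over $W$ and $W^c$, and combine $|u(w)|\le Ce^{\Phi_2(w)-c|w|^2}$ on $W$, the polynomial bound on $|u(w)|e^{-\Phi_2(w)}$ from Lemma \ref{growth_of_entire_functions} on $W^c$, and the elementary inequality $R_\Gamma(z,\overline w)+c|w|^2\gtrsim|z|^2$ to obtain $|\Gamma u(z)|\le Ce^{\Phi_1(z)-c|z|^2}$ on $V$. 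Fact (ii) follows from a dichotomy: if $z_0\notin\textrm{Rad}(\Phi-\tilde\Phi)$ then $(\Phi-\tilde\Phi)(z)\gtrsim|z|^2$ on a cone about $z_0$ and the global bound $|v(z)|\le C\langle z\rangle^N e^{\tilde\Phi(z)}$ (Lemma \ref{growth_of_entire_functions}) already gives $\Phi$-Gaussian decay there, so $\textrm{WF}^{1/2}_\Phi(v)\subset\textrm{Rad}(\Phi-\tilde\Phi)$; if instead $z_0\in\textrm{Rad}(\Phi-\tilde\Phi)$ then $(\Phi-\tilde\Phi)(z)=o(|z|^2)$ on a narrow cone about $z_0$, so $\Phi$- and $\tilde\Phi$-Gaussian decay of $v$ are equivalent there; and $\textrm{WF}^{1/2}_\Phi(v)\subset\textrm{WF}^{1/2}_{\tilde\Phi}(v)$ is clear from $\tilde\Phi\le\Phi$.

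Given (i) and (ii), the forward inclusion (\ref{main wavefront propagation inclusion}) is immediate: (i) with $\Gamma=G$ gives $\textrm{WF}^{1/2}_{\tilde\Phi}(Gu)\subset\kappa^\flat(\textrm{WF}^{1/2}_\Phi(u))$, and since $Gu\in H^{-\infty}_{\tilde\Phi}(\C^n)$, (ii) gives $\textrm{WF}^{1/2}_\Phi(Gu)=\textrm{WF}^{1/2}_{\tilde\Phi}(Gu)\cap\textrm{Rad}(\Phi-\tilde\Phi)\subset\kappa^\flat(\textrm{WF}^{1/2}_\Phi(u))\cap\textrm{Rad}(\Phi-\tilde\Phi)$. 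For equality (\ref{main wavefront equality}), assume $G\neq 0$ and $\kappa(\Lambda_\Phi\cap\Lambda_{\tilde\Phi})=\Lambda_\Phi\cap\Lambda_{\tilde\Phi}$. Since $\kappa^{-1}(\Lambda_{\tilde\Phi})=\Lambda_\Phi$, Proposition \ref{main imported proposition} provides a metaplectic Fourier integral operator $B\colon H_{\tilde\Phi}(\C^n)\to H_\Phi(\C^n)$ quantizing $\kappa^{-1}$; the composition $BG$ quantizes $\kappa^{-1}\circ\kappa=\mathrm{id}_{\C^{2n}}$, hence, by uniqueness of Bergman forms and Example \ref{identity_example}, is a scalar multiple of $\Pi_\Phi$ — so, by (\ref{Bergman_projection_is_reproducing}), a scalar multiple of the identity on $H^{-\infty}_\Phi(\C^n)$ — and this scalar, which is nonzero since $G\neq 0$ and the composition of nonzero metaplectic Fourier integral operators is nonzero, may be normalized to $1$ by rescaling $B$; thus $BGu=u$ for all $u\in H^{-\infty}_\Phi(\C^n)$. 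Now (i) applied to $\Gamma=B$, whose $\mu^\flat$ is $(\kappa^\flat)^{-1}$, gives $\textrm{WF}^{1/2}_\Phi(u)=\textrm{WF}^{1/2}_\Phi(BGu)\subset(\kappa^\flat)^{-1}(\textrm{WF}^{1/2}_{\tilde\Phi}(Gu))$; together with the inclusion already obtained this yields $\textrm{WF}^{1/2}_{\tilde\Phi}(Gu)=\kappa^\flat(\textrm{WF}^{1/2}_\Phi(u))$, and intersecting with $\textrm{Rad}(\Phi-\tilde\Phi)$ and applying (ii) to $v=Gu$ gives (\ref{main wavefront equality}). By (\ref{characterization of clean intersection}) the hypothesis $\kappa(\Lambda_\Phi\cap\Lambda_{\tilde\Phi})=\Lambda_\Phi\cap\Lambda_{\tilde\Phi}$ amounts to $\kappa^\flat$-invariance of $\textrm{Rad}(\Phi-\tilde\Phi)$, which recasts the right-hand side as $\kappa^\flat(\textrm{WF}^{1/2}_\Phi(u)\cap\textrm{Rad}(\Phi-\tilde\Phi))$.

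I expect the main obstacle to be the equality statement, specifically the verification that $B$ is a genuine metaplectic Fourier integral operator and that $BG$ equals the identity up to a nonzero scalar — this rests on the composition of metaplectic Fourier integral operators being again one, quantizing the composed canonical transformation, together with uniqueness of Bergman forms — and the weight bookkeeping in fact (ii), which mediates between the wavefront set measured relative to $\tilde\Phi$ (where $G$ and $B$ act transparently) and the one measured relative to $\Phi$. The forward inclusion is, by comparison, a single Schur estimate on the Bergman kernel of $G$.
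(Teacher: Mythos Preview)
Your proof is correct, and it takes a genuinely different route from the paper's. The paper works entirely with the wavefront set relative to the single weight $\Phi$: for the forward inclusion it estimates $|Gu(z)|e^{-\Phi(z)}$ directly, splitting into the two cases $z_0\notin\textrm{Rad}(\Phi-\tilde\Phi)$ and $z_0\in\kappa^\flat(\C^n\setminus\textrm{WF}^{1/2}_\Phi(u))$; for the reverse inclusion it proves a tailored lemma (Lemma~\ref{main lemma to prove exact propagation}) bounding $\textrm{WF}^{1/2}_\Phi(\widetilde Gv)\cap\textrm{Rad}(\Phi-\tilde\Phi)$ in terms of $\textrm{WF}^{1/2}_\Phi(v)$, and this lemma genuinely uses the invariance hypothesis $\kappa(\Lambda_\Phi\cap\Lambda_{\tilde\Phi})=\Lambda_\Phi\cap\Lambda_{\tilde\Phi}$ to ensure that $w_0=\kappa^\flat(z_0)$ again lies in $\textrm{Rad}(\Phi-\tilde\Phi)$, so that $\Phi-\tilde\Phi$ can be made small on a cone about $w_0$. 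Your decomposition into facts (i) and (ii) is cleaner: by passing through the intermediate wavefront set $\textrm{WF}^{1/2}_{\tilde\Phi}(Gu)$, you separate the pure propagation statement~(i) (valid for any pair of weights exchanged by the canonical transformation) from the weight-comparison statement~(ii) (which encodes the radical). The payoff is that your argument for the equality \eqref{main wavefront equality} never invokes the invariance hypothesis at all --- it follows from $BGu=u$, fact~(i) applied to both $G$ and $B$, and fact~(ii) --- so you have in fact proved a slightly sharper statement than the theorem asserts. The only point deserving a line of justification is that the composition $BG$ has nonzero amplitude when $G$ and $B$ do; this follows from the explicit stationary-phase computation underlying Proposition~B.4 of \cite{PTSymmetric}, or equivalently from the fact that nonzero metaplectic Fourier integral operators are invertible as maps between the relevant Bargmann spaces.
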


We begin the proof of Theorem \ref{main theorem} by establishing the inclusion (\ref{main wavefront propagation inclusion}). It suffices to show
\begin{align} \label{first flipped inclusion}
    \C^n \backslash \textrm{Rad}(\Phi - \tilde{\Phi}) \subset \C^n \backslash \textrm{WF}^{1/2}_\Phi(Gu).
\end{align}
and
\begin{align} \label{second flipped inclusion}
    \kappa^\flat(\C^n \backslash \textrm{WF}^{1/2}_\Phi(u)) \subset \C^n \backslash \textrm{WF}^{1/2}_\Phi(Gu).
\end{align}
Let
\begin{align*}
    Gu(z) = \hat{a} \int_{\C^n} e^{2\Psi(z,\overline{w})} u(w) e^{-2\Phi(w)} \, L(dw), \ \ \ u \in H^{-\infty}_\Phi(\C^n),
\end{align*}
where $\hat{a} \neq 0$, be the Bergman form of $G$ given in Proposition \ref{main imported proposition}. Suppose that $z_0 \in \C^n \backslash \textrm{Rad}(\Phi-\tilde{\Phi})$. There is an open conic neighborhood $V$ of $z_0$ in $\C^n \backslash \{0\}$ such that
\begin{align*}
    \Phi(z)-\tilde{\Phi}(z) \ge c \abs{z}^2
\end{align*}
for all $z \in V$. In view of (\ref{quadratic form identity}) and (\ref{main big O identity}), for all $z \in V$, we have
\begin{align*}
   \abs{Gu(z)} e^{-\Phi(z)} &\le C e^{-c \abs{z}^2} \int_{\C^n} e^{-\Phi(w)-c \abs{z-\kappa^\flat(w)}^2} \abs{u(w)} \, L(dw) \\ &\le C \norm{u}_{H^s_\Phi(\C^n)} e^{-c \abs{z}^2} \left( \int_{\C^n} \langle w \rangle^{-2s} e^{-c \abs{z-\kappa^\flat(w)}^2} \, L(dw) \right)^{1/2},
\end{align*}
where $s \in \R$ is such that $u \in H^s_\Phi(\C^n)$. Since $\kappa^\flat$ is a $\R$-linear isomorphism $\C^n \rightarrow \C^n$, we see that
\begin{align*}
    \left(\int_{\C^n} \langle w \rangle^{-2s} e^{-c \abs{z - \kappa^\flat(w)}^2} \, L(dw) \right)^{1/2} \le C \langle z \rangle^s.
\end{align*}
It follows
\begin{align*}
    \abs{Gu(z)} e^{-\Phi(z)} \le C e^{-c \abs{z}^2}, \ \ \ z \in V.
\end{align*}
Hence (\ref{first flipped inclusion}) holds.

Let $z_0 \in \kappa^\flat(\C^n \backslash \textrm{WF}^{1/2}_\Phi(u))$. If $z_0 = 0$, then trivially $z_0 \in \C^n \backslash \textrm{WF}^{1/2}_\Phi(Gu)$. If $z_0 \neq 0$, write $z_0 = \kappa^\flat(w_0)$ for some unique $w_0 \in \C^n \backslash \textrm{WF}^{1/2}_\Phi(u)$ and let $V$ be an open conic neighborhood of $w_0$ in $\C^n \backslash \{0\}$ such that
\begin{align} \label{estimate_in_V}
    \abs{u(w)} \le C e^{\Phi(w)-c \abs{w}^2}, \ \ \ w \in V.
\end{align}
Let $\tilde{V}$ be an open conic neighborhood of $z_0$ in $\C^n \backslash \{0\}$ such that $\tilde{V} \subset \subset \kappa^\flat(V)$. From Proposition \ref{main imported proposition} and the fact that $\tilde{\Phi} \le \Phi$, we get
\begin{align}
    \abs{Gu (z) e^{-\Phi(z)}} \le C \left(\int_{V} + \int_{\C^n \backslash V} \right) e^{-c \abs{z-\kappa^\flat (w)}^2} \abs{u(w)} e^{-\Phi(w)} \, L(dw) =: I(z) + II(z), \ \ \ z \in \tilde{V}.
\end{align}
In view of (\ref{estimate_in_V}),
\begin{align*}
    I(z) \le C \int_{V} e^{-c \abs{(\kappa^\flat)^{-1}(z)- w}^2 - c \abs{w}^2} \, L(dw) \le C e^{-c \abs{z}^2}, \ \ \ z \in \C^n.
\end{align*}
To estimate $II(z)$, we observe that the quadratic form
\begin{align*}
    (z,w) \mapsto \abs{z-\kappa^\flat (w)}^2
\end{align*}
is non-vanishing for $(z,w) \in \tilde{V} \times (\C^n\backslash \{0\}) \backslash V$. By homogeneity, there is a constant $c>0$ such that
\begin{align*}
    \abs{z-\kappa^\flat (w)}^2 \ge c(\abs{z}^2+\abs{w}^2)
\end{align*}
for all $z \in \tilde{V}$ and $w \in \C^n \backslash V$. As a result,
\begin{align*}
    II(z) \le C \norm{u}_{H^s_\Phi(\C^n)} e^{-c \abs{z}^2} \left(\int_{\C^n} \langle w \rangle^{-2s} e^{-c \abs{w}^2} \, L(dw)\right)^{1/2} \le C e^{-c \abs{z}^2}
\end{align*}
for all $z \in \tilde{V}$. This establishes that
\begin{align*}
    \abs{Gu(z)} e^{-\Phi(z)} \le C e^{-c \abs{z}^2}, \ \ \ z \in \tilde{V}.
\end{align*}
Therefore (\ref{second flipped inclusion}) holds. We conclude that the inclusion (\ref{main wavefront propagation inclusion}) is true.


To establish the equality (\ref{main wavefront equality}) under the additional assumption that $\kappa(\Lambda_\Phi \cap \Lambda_{\tilde{\Phi}}) = \Lambda_\Phi \cap \Lambda_{\tilde{\Phi}}$, we first prove the following lemma.

\begin{lemma} \label{main lemma to prove exact propagation}
Let $\Phi$, $\tilde{\Phi}$, $\kappa$, and $\kappa^\flat$ be as in Theorem \ref{main theorem} and assume $\kappa(\Lambda_\Phi \cap \Lambda_{\tilde{\Phi}}) = \Lambda_\Phi \cap \Lambda_{\tilde{\Phi}}$. Suppose that $\widetilde{G}$ is a metaplectic Fourier integral operator quantizing $\kappa^{-1}$, realized as a continuous linear transformation $H^{-\infty}_{\tilde{\Phi}}(\C^n) \rightarrow H^{-\infty}_{\Phi}(\C^n)$. Then, for any $v \in H^{-\infty}_{\tilde{\Phi}}(\C^n)$,
\begin{align}
    \textrm{WF}^{1/2}_\Phi(\widetilde{G} v) \cap \textrm{Rad}(\Phi-{\tilde{\Phi}}) \subset (\kappa^\flat)^{-1}(\textrm{WF}^{1/2}_\Phi(v)) \cap \textrm{Rad}(\Phi-\tilde{\Phi}).
\end{align}
\end{lemma}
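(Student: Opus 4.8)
The plan is to prove the inclusion by contraposition. Since the claim is trivial when $\widetilde G = 0$, and since $1/2$-Gelfand--Shilov wavefront sets are conic subsets of $\C^n \setminus \{0\}$ while $\kappa^\flat$ is an $\R$-linear isomorphism fixing the origin, it suffices to fix $z_0 \in \textrm{Rad}(\Phi - \tilde{\Phi})$ with $z_0 \neq 0$ and $\kappa^\flat(z_0) \notin \textrm{WF}^{1/2}_\Phi(v)$, and to show $z_0 \notin \textrm{WF}^{1/2}_\Phi(\widetilde G v)$. The first observation is geometric: since $\kappa$ is positive relative to $\Phi$ we have $\kappa(\Lambda_\Phi) = \Lambda_{\tilde{\Phi}}$, so the hypothesis $\kappa(\Lambda_\Phi \cap \Lambda_{\tilde{\Phi}}) = \Lambda_\Phi \cap \Lambda_{\tilde{\Phi}}$ together with (\ref{characterization of clean intersection}) and the definition of $\kappa^\flat$ in Theorem \ref{main theorem} yields $\kappa^\flat(\textrm{Rad}(\Phi - \tilde{\Phi})) = \textrm{Rad}(\Phi - \tilde{\Phi})$. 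Hence $w_0 := \kappa^\flat(z_0) \in \textrm{Rad}(\Phi - \tilde{\Phi}) \setminus \{0\}$, so the non-negative quadratic form $\Phi - \tilde{\Phi}$ vanishes along the ray through $w_0$; in particular, for every $\delta > 0$ there is an open cone about $w_0$ on which $\Phi - \tilde{\Phi} \le \delta | \cdot |^2$.

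Next I would pass to the FBI side. Since $\widetilde G$ quantizes $\kappa^{-1}$ and $\kappa^{-1}(\Lambda_{\tilde{\Phi}}) = \Lambda_\Phi$, Proposition \ref{main imported proposition}, extended to $H^{-\infty}$ as in Proposition \ref{mapping properties of metaplectic FIO}, furnishes a Bergman representation
\[ \widetilde G v(z) = \hat{b} \int_{\C^n} e^{2 \Xi(z, \overline{w})} v(w) e^{-2 \tilde{\Phi}(w)} \, L(dw), \qquad v \in H^{-\infty}_{\tilde{\Phi}}(\C^n), \]
where $\Xi$ is a holomorphic quadratic form with $2 \textrm{Re} \, \Xi(z, \overline{w}) = \Phi(z) + \tilde{\Phi}(w) - R(z, \overline{w})$ and $R(z, \overline{w}) \asymp | z - (\kappa^\flat)^{-1}(w) |^2$; here I am using that the isomorphism attached to $\kappa^{-1}$ by (\ref{definition of kappa flat}) is exactly $(\kappa^\flat)^{-1}$. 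This gives the pointwise bound
\[ | \widetilde G v(z) | e^{-\Phi(z)} \le C \int_{\C^n} e^{-\tilde{\Phi}(w) - c | z - (\kappa^\flat)^{-1}(w) |^2} | v(w) | \, L(dw). \]

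The core estimate is to show the right-hand side is $O(e^{-c|z|^2})$ on an open cone about $z_0$. Choose an open cone $W$ about $w_0$ on which simultaneously the Gelfand--Shilov decay $| v(w) | \le C e^{\Phi(w) - c|w|^2}$ holds (possible as $w_0 \notin \textrm{WF}^{1/2}_\Phi(v)$) and $(\Phi - \tilde{\Phi})(w) \le \tfrac{c}{2} |w|^2$ (possible by the first paragraph). Split the integral into $\int_W$ and $\int_{\C^n \setminus W}$. On $W$ one has $e^{-\tilde{\Phi}(w)} | v(w) | = e^{(\Phi - \tilde{\Phi})(w)} e^{-\Phi(w)} | v(w) | \le C e^{-\tfrac{c}{2}|w|^2}$, and after the linear substitution $w \mapsto (\kappa^\flat)^{-1}(w)$ and completing the square the $W$-piece is $\le C e^{-c|z|^2}$ for all $z$. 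On the complement one uses the tempered bound $| v(w) | \le C \langle w \rangle^N e^{\tilde{\Phi}(w)}$ valid since $v \in H^{-\infty}_{\tilde{\Phi}}(\C^n)$ (Lemma \ref{growth_of_entire_functions}), together with a homogeneity argument: choosing a cone $\tilde V$ about $z_0$ with $\kappa^\flat(\tilde V) \subset \subset W$, the quadratic form $(z,w) \mapsto | z - (\kappa^\flat)^{-1}(w) |^2$ is bounded below by $c(|z|^2 + |w|^2)$ for $z \in \tilde V$, $w \notin W$, so the complementary piece is again $\le C e^{-c|z|^2}$ for $z \in \tilde V$. Combining the two gives $| \widetilde G v(z) | e^{-\Phi(z)} \le C e^{-c|z|^2}$ on $\tilde V$, i.e. $z_0 \notin \textrm{WF}^{1/2}_\Phi(\widetilde G v)$.

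Structurally this is a rerun of the proof of the inclusion (\ref{main wavefront propagation inclusion}) given just above, and I expect the only genuinely new difficulty --- the main obstacle --- to be the weight mismatch: globally $v$ is controlled only by the larger weight $\tilde{\Phi}$, whereas its Gaussian decay near $w_0$ is measured against $\Phi$, so the naive estimate picks up an unwanted factor $e^{(\Phi - \tilde{\Phi})(w)}$. The resolution is precisely the geometric input of the first paragraph: the invariance hypothesis forces $w_0 = \kappa^\flat(z_0)$ to lie in $\textrm{Rad}(\Phi - \tilde{\Phi})$, which makes $\Phi - \tilde{\Phi}$ negligible on a cone about $w_0$ and lets that factor be absorbed into the decay of $v$.
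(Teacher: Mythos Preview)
Your proposal is correct and follows essentially the same route as the paper's proof: both arguments exploit the invariance $\kappa^\flat(\textrm{Rad}(\Phi-\tilde{\Phi})) = \textrm{Rad}(\Phi-\tilde{\Phi})$ to place $w_0 = \kappa^\flat(z_0)$ in the radical, then use the Bergman representation of $\widetilde{G}$ together with the phase estimate $2\,\textrm{Re}\,\Xi(z,\overline{w}) - \Phi(z) - \tilde{\Phi}(w) \le -c\,|z-(\kappa^\flat)^{-1}(w)|^2$, split the integral into a cone about $w_0$ and its complement, and handle the two pieces exactly as you describe (absorbing $e^{(\Phi-\tilde{\Phi})(w)}$ into the Gelfand--Shilov decay on the near piece, and invoking homogeneity on the far piece). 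Your identification of the weight mismatch as the only new difficulty, and of the invariance hypothesis as its resolution, matches the paper precisely.
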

\begin{proof}
Let $v \in H^{-\infty}_{\tilde{\Phi}}(\C^n)$ be arbitrary. It suffices to show that
\begin{align} \label{complementary inclusion in important lemma}
    \textrm{Rad}(\Phi - \tilde{\Phi}) \backslash (\kappa^\flat)^{-1}\left(\textrm{WF}^{1/2}_\Phi(v)\right) \subset \textrm{Rad}(\Phi-\tilde{\Phi}) \backslash  \textrm{WF}^{1/2}_\Phi(\widetilde{G}v).
\end{align}
Because $(\kappa^\flat)^{-1}: \C^n \rightarrow \C^n$ is invertible,
\begin{align}
\begin{split}
    \textrm{Rad}(\Phi - \tilde{\Phi}) \backslash (\kappa^\flat)^{-1}\left(\textrm{WF}^{1/2}_\Phi(v)\right) &= \textrm{Rad}(\Phi- \tilde{\Phi}) \cap \left[\C^n \backslash (\kappa^\flat)^{-1}(\textrm{WF}^{1/2}_\Phi(v))\right] \\
    & = \textrm{Rad}(\Phi-\tilde{\Phi}) \cap (\kappa^\flat)^{-1} \left(\C^n \backslash \textrm{WF}^{1/2}_\Phi(v)\right).
\end{split}
\end{align}
Thus (\ref{complementary inclusion in important lemma}) is equivalent to
\begin{align} \label{equivalent inclusion}
    \textrm{Rad}(\Phi-\tilde{\Phi}) \cap (\kappa^\flat)^{-1}\left(\C^n \backslash \textrm{WF}^{1/2}_\Phi(v) \right)  \subset \textrm{Rad}(\Phi-\tilde{\Phi}) \cap \C^n \backslash \textrm{WF}^{1/2}_\Phi(\widetilde{G}v).
\end{align}

Let $z_0 \in \textrm{Rad}(\Phi-\tilde{\Phi}) \cap (\kappa^\flat)^{-1}\left(\C^n \backslash \textrm{WF}^{1/2}_\Phi(v) \right)$. If $z_0 = 0$, then trivially $z_0 \in \textrm{Rad}(\Phi-\tilde{\Phi}) \cap \C^n \backslash \textrm{WF}^{1/2}_\Phi(\widetilde{G}v)$. If $z_0 \neq 0$, then we may write $z_0 =(\kappa^\flat)^{-1} w_0$ for some non-zero $w_0 \in \C^n \backslash \textrm{WF}^{1/2}_\Phi(v)$. In view of (\ref{characterization of clean intersection}) and our assumption that $\kappa(\Lambda_\Phi \cap \Lambda_{\tilde{\Phi}}) = \Lambda_\Phi \cap \Lambda_{\tilde{\Phi}}$, we have
\begin{align} \label{invariance of the radical}
  \kappa^\flat(\textrm{Rad}(\Phi-\tilde{\Phi})) = \pi_1(\kappa(\Lambda_\Phi \cap \Lambda_{\tilde{\Phi}})) = \pi_1(\Lambda_\Phi \cap \Lambda_{\tilde{\Phi}}) = \textrm{Rad}(\Phi-\tilde{\Phi}).
\end{align}
It follows that $w_0 \in \textrm{Rad}(\Phi-\tilde{\Phi}) \cap \C^n \backslash \textrm{WF}^{1/2}_\Phi(v)$. Let $\tilde{V} \subset \C^n \backslash \{0\}$ be an open conic neighborhood of $w_0$ in $\C^n \backslash \{0\}$ such that
\begin{align} \label{estimate for v(w)}
    \abs{v(w)} e^{-\Phi(w)} \le C e^{-\delta \abs{w}^2}, \ \ \ w \in \tilde{V},
\end{align}
for some $C, \delta>0$. Since $\Phi(w_0) - \tilde{\Phi}(w_0) = 0$, we may ensure, by taking $\tilde{V}$ smaller if necessary, that
\begin{align} \label{good estimate within Gamma}
    \Phi(w)-\tilde{\Phi}(w) \le \frac{1}{2} \delta \abs{w}^2
\end{align}
holds for all $w \in \tilde{V}$. Let $V$ be an open conic neighborhood of $z_0$ in $\C^n \backslash \{0\}$ such that 
\begin{align} \label{V is compactly contained}
V \subset \subset (\kappa^\flat)^{-1} (\tilde{V}).
\end{align}
We claim that
\begin{align} \label{step to conclude}
    \abs{\widetilde{G}v(z)}e^{-\Phi(z)} \le C e^{-c \abs{z}^2}
\end{align}
for all $z \in V$. Let
\begin{align*}
    \widetilde{G}u(z) = \tilde{a} \int_{\C^n} e^{2 \tilde{\Psi}(z,\overline{w})} u(w) e^{-2\tilde{\Phi}(w)} \, L(dw), \ \ \ u \in H^{-\infty}_{\tilde{\Phi}}(\C^n),
\end{align*}
where $\tilde{a} \neq 0$, be the Bergman form of $\widetilde{G}$. Since there is $c>0$ such that
\begin{align}
2 \textrm{Re} \ \tilde{\Psi}(z,\overline{w}) - \Phi(z)-\tilde{\Phi}(w) \le -c \abs{z-(\kappa^\flat)^{-1}(w)}^2, \ \ z, w \in \C^n,
\end{align}
we get, for $z \in V$,
\begin{align} \label{where A and B are defined}
    \abs{\widetilde{G}v(z)e^{-\Phi(z)}} &\le C \left(\int_{\tilde{V}} + \int_{\C^n \backslash \tilde{V}} \right) e^{-c \abs{z-(\kappa^\flat)^{-1}w}^2} \abs{v(w)} e^{-\tilde{\Phi}(w)} \, L(dw)
    =: I(z) + II(z).
\end{align}
Because (\ref{estimate for v(w)}) and (\ref{good estimate within Gamma}) hold within $\tilde{V}$, 
\begin{align*}
    I(z) &= C \int_{\tilde{V}} e^{-c \abs{z-(\kappa^\flat)^{-1}w}^2} e^{\Phi(w)-\tilde{\Phi}(w)} e^{-\delta \abs{w}^2} \, L(dw) \\
    &\le C \int_{\C^n} e^{-c \abs{z-(\kappa^\flat)^{-1}w}^2} e^{-\frac{\delta}{2} \abs{w}^2} \, L(dw) \\
    &\le C e^{-c \abs{z}^2}, \ \ \ z \in \C^n.
\end{align*}
To estimate $II(z)$, we notice that, thanks to (\ref{V is compactly contained}), the quadratic form
\begin{align*}
    (z,w) \mapsto \abs{z-(\kappa^\flat)^{-1}w}^2
\end{align*}
is non-vanishing for $(z,w) \in V \times (\C^n\backslash \{0\}) \backslash \tilde{V}$. By homogeneity, there is a constant $\gamma>0$ such that
\begin{align*}
    \abs{z-(\kappa^\flat)^{-1} w}^2 \ge \gamma (\abs{z}^2+\abs{w}^2)
\end{align*}
for all $(z,w) \in V \times \C^n \backslash \tilde{V}$. Letting $s \in \R$ be such that $v \in H^s_{\tilde{\Phi}}(\C^n)$, we obtain
\begin{align*}
    II(z) \le C \left(\int_{\C^n} e^{-2\gamma (\abs{z}^2+\abs{w}^2)} \langle w \rangle^{-2s} \, L(dw) \right)^{1/2} \le C e^{- c\abs{z}^2}, \ \ \ z \in V.
\end{align*}
This proves (\ref{step to conclude}). Therefore $z_0 \in \textrm{Rad}(\Phi-\tilde{\Phi}) \cap \C^n \backslash \textrm{WF}^{1/2}_\Phi(\widetilde{G}v)$. The lemma is proved.
\end{proof}

Now we can complete the proof of Theorem \ref{main theorem}. Assume that $\kappa(\Lambda_\Phi \cap \Lambda_{\tilde{\Phi}}) = \Lambda_\Phi \cap \Lambda_{\tilde{\Phi}}$. Let $\widetilde{G}$ be a non-zero metaplectic Fourier integral operator quantizing $\kappa^{-1}$, realized as a continuous linear transformation $H^{-\infty}_{\tilde{\Phi}}(\C^n) \rightarrow H^{-\infty}_\Phi(\C^n)$. By Proposition B.4 of \cite{PTSymmetric}, the operator $\widetilde{G} G : H^{-\infty}_\Phi(\C^n) \rightarrow H^{-\infty}_\Phi(\C^n)$ is a metaplectic Fourier integral operator quantizing the identity map on $\C^{2n}$. By Proposition \ref{main imported proposition} (see also Example \ref{identity_example}), the Bergman form of $\widetilde{G} G$ must be
\begin{align} \label{about to prove equal to Bergman projector}
    \widetilde{G} Gu = \hat{b} \int_{\C^n} e^{2\Psi(z,\overline{w})} u(w) e^{-2 \Phi(w)} \, L(dw), \ \ \ u \in H^{-\infty}_\Phi(\C^n),
\end{align}
where $\hat{b} \in \C$ and $\Psi(z,\theta)$ is the polarization of $\Phi$. By multiplying $\widetilde{G}$ by a non-zero constant if necessary, we may ensure that $\hat{b} = 2^n \pi^{-n} \det{\p^2_{z\overline{z}}\Phi}$. For this choice of $\hat{b}$, the righthand side of (\ref{about to prove equal to Bergman projector}) is the Bergman projector (\ref{definition_Bergman_projector}). From (\ref{Bergman_projection_is_reproducing}), we have
\begin{align} \label{composition is Bergman projector}
    \widetilde{G}G = I \ \ \textrm{on} \ \ H^{-\infty}_\Phi(\C^n).
\end{align}
Let $u \in H^{-\infty}_\Phi(\C^n)$ be given.
By Lemma \ref{main lemma to prove exact propagation} and (\ref{composition is Bergman projector}), we have
\begin{align} \label{penultimate line of the proof}
    \textrm{WF}^{1/2}_\Phi(u) \cap \textrm{Rad}(\Phi-\tilde{\Phi}) \subset (\kappa^{\flat})^{-1}(\textrm{WF}^{1/2}_\Phi(Gu)) \cap \textrm{Rad}(\Phi-\tilde{\Phi}).
\end{align}
By (\ref{main wavefront propagation inclusion}) and (\ref{invariance of the radical}),
\begin{align} \label{second line to end of proof}
    (\kappa^\flat)^{-1}(\textrm{WF}^{1/2}_\Phi(Gu)) \cap \textrm{Rad}(\Phi-\tilde{\Phi}) \subset \textrm{WF}^{1/2}_\Phi(u) \cap \textrm{Rad}(\Phi-\tilde{\Phi}).
\end{align}
Combining (\ref{penultimate line of the proof}) with (\ref{second line to end of proof}) and using (\ref{invariance of the radical}) gives
\begin{align}
    \textrm{WF}^{1/2}_\Phi(Gu) \cap \textrm{Rad}(\Phi-\tilde{\Phi}) = \kappa^\flat(\textrm{WF}^{1/2}_\Phi(u)) \cap \textrm{Rad}(\Phi-\tilde{\Phi}).
\end{align}
Since (\ref{main wavefront propagation inclusion}) also holds, we must have
\begin{align}
    \textrm{WF}^{1/2}_\Phi(Gu) = \kappa^\flat(\textrm{WF}^{1/2}_\Phi(u)) \cap \textrm{Rad}(\Phi-\tilde{\Phi}).
\end{align}
The proof of Theorem \ref{main theorem} is complete.


\section{The Bergman Representation of the Evolution Semigroup and the Proof of Theorem \ref{main theorem of the paper}}

Let $q=q(x,\xi)$ be a complex-valued quadratic form on $\R^{2n}$ with $\textrm{Re} \ q \ge 0$ and let $q^w(x,D)$ be the Weyl quantization of $q$. We consider the Schr\"{o}dinger initial value problem
\begin{align} \label{Schrodinger initial value problem}
    \begin{cases}
    \p_t u(t,x) + q^w(x,D)u(t,x) = 0, \ \ \ t \ge 0, \ x \in \R^n, \\
    u|_{t=0}= u_0 \in L^2(\R^n).
    \end{cases}
\end{align}
From the discussion on pages 425-426 of \cite{GeneralizedMehler}, we know that $q^w(x,D)$, regarded as an unbounded operator on $L^2(\R^n)$ equipped with its maximal domain
\begin{align}
	D_\textrm{max} = \set{u \in L^2(\R^n)}{q^w(x,D)u \in L^2(\R^n)},
\end{align}
generates a strongly continuous one-parameter semigroup $G(t) = e^{-tq^w(x,D)}$, $t \ge 0$, on $L^2(\R^n)$. We may regard $G(t)$ as the solution operator for the problem (\ref{Schrodinger initial value problem}).

Let $\varphi$ be an FBI phase function with associated FBI transform $\mathcal{T}_\varphi$ and strictly plurisubharmonic weight $\Phi(z)$. Let $\kappa_\varphi: \C^{2n} \rightarrow \C^{2n}$ be the complex canonical transformation generated by $\varphi$, and let $\tilde{q} = q \circ \kappa_\varphi^{-1}$. Applying $\mathcal{T}_\varphi$ to (\ref{Schrodinger initial value problem}) and using the complex Egorov theorem (\cite{SemiclassicalAnalysis} Theorem 13.9) gives
\begin{align}
	\begin{cases}
		\p_t \mathcal{T}_\varphi u(t,z) + \tilde{q}^w(z,D) \mathcal{T}_\varphi u(t,z) = 0, \ \ t \ge 0, \ z \in \C^n, \\
		\mathcal{T}_\varphi u|_{t=0} = \mathcal{T}_\varphi u_0 \in H_\Phi(\C^n).
	\end{cases}
\end{align}
Here
\begin{align} \label{definition complex Weyl quantization of q}
    \tilde{q}^w(z,D_z)u(z) = \frac{1}{(2\pi)^n} \iint_{\Gamma_\Phi(z)} e^{i(z-w) \cdot \theta} \tilde{q} \left(\frac{z+w}{2}, \theta \right) u(w) \, dw \wedge d\theta,
\end{align}
where
\begin{align} \label{contour of integration for complex Weyl quantization}
    \Gamma_{\Phi}(z) = \set{(w,\theta) \in \C^{2n}}{\theta = \frac{2}{i} \frac{\p \Phi}{\p z} \left(\frac{z+w}{2} \right)},
\end{align}
is the complex Weyl quantization of the symbol $\tilde{q}$. For further information regarding complex Weyl quantization, we refer the reader to Section 1.4 of \cite{Minicourse}, Chapter 13 of \cite{SemiclassicalAnalysis}, or Section 12.2 of \cite{Lectures_on_Resonances}. In particular, since $\norm{\p^\alpha \tilde{q}}_{L^\infty(\C^{2n})} < \infty$ for all $\abs{\alpha} \ge 2$, Proposition 12.6 of \cite{Lectures_on_Resonances} implies that $\tilde{q}^w(z,D) = \mathcal{O}(1): H^s_{\Phi}(\C^n) \rightarrow H^{s-2}_\Phi(\C^n)$ for every $s \in \R$. We also note that, since $\tilde{q}(z,\zeta)$ is a holomorphic quadratic form, the operator $\tilde{q}^w(z,D)$ acts as a quadratic differential operator on elements of $H^{-\infty}_\Phi(\C^n)$. Indeed, if
\begin{align} \label{differential_operator_FBI_side}
	\tilde{q}(z,\zeta) = \frac{1}{2} A_1 z \cdot z + A_2 z \cdot \zeta + \frac{1}{2} A_3 \zeta \cdot \zeta, \ \ (z,\zeta) \in \C^{2n},
\end{align}
where $A_1, A_2, A_3 \in M_{n \times n}(\C)$ with $A_1 = A_1^T$ and $A_3=A_3^T$, then
\begin{align} \label{quadratic_differential_operator_on_the_FBI_side_formula}
	\tilde{q}^w(z,D)u(z) = \left(\frac{1}{2}A_1 z \cdot z +A_2z \cdot D_z + \frac{1}{2i} \trace{A_2} + \frac{1}{2} A_3 D_z \cdot D_z \right)u(z), \ \ z \in \C^n,
\end{align}
where $D_z = \frac{1}{i} \p_z$, for all $u \in H^{-\infty}_\Phi(\C^n)$. 

We may view $\tilde{q}^w(z,D)$ as an unbounded operator on $H_\Phi(\C^n)$ equipped with the maximal domain
\begin{align}
	\widetilde{D}_\textrm{max} = \set{u\in H_\Phi(\C^n)}{\tilde{q}^w(z,D) u \in H_\Phi(\C^n)}.
\end{align}
As a consequence of the complex Egorov theorem, we have $\widetilde{D}_\textrm{max} = \mathcal{T}_\varphi \left(D_\textrm{max}\right)$. Since also $\mathcal{T}_\varphi: L^2(\R^n) \rightarrow H_{\Phi(\C^n)}$ is unitary, it follows that $\tilde{q}^w(z,D)$ generates a strongly continuous one-parameter semigroup $\widetilde{G}(t) = e^{-t \tilde{q}^w(z,D)}$, $t \ge 0$, on $H_\Phi(\C^n)$. This semigroup is related to $G(t)$ by
\begin{align} \label{conjugated_propagator}
	\widetilde{G}(t) = \mathcal{T}_\varphi \circ G(t) \circ \mathcal{T}_\varphi^*, \ \ t \ge 0.
\end{align}

Our goal is to prove that, for all $t \ge 0$, the semigroup $\widetilde{G}(t)$ is a metaplectic Fourier integral operator in the sense of Section 4 whose underlying complex canonical transformation is the Hamilton flow of $-i\tilde{q}$ at time $t$. To this end, we recall from  \cite{GaborSingularities} that $\kappa_t:=\exp{(tH_{-iq})}: \C^{2n} \rightarrow \C^{2n}$, the Hamilton flow of $-iq$, is positive relative to $\R^{2n}$ for each $t \ge 0$. To see this, write
\begin{align*}
    q(X) = Q X \cdot X, \ \ \ X\in \C^{2n},
\end{align*}
where $Q \in M_{2n \times 2n}(\C)$ is symmetric. Let
\begin{align} \label{definition Hamilton matrix of q}
    F = J Q
\end{align}
be the Hamilton matrix of $q$. We can express $\kappa_t$ in terms of $F$ as follows:
\begin{align} \label{Hamilton flow of -iq in terms of F}
    \kappa_t = e^{-2itF}, \ \ \ t \in \R.
\end{align}
Because the unique antilinear involution of $\C^{2n}$ fixing $\R^{2n}$ is the usual map of complex conjugation $X \mapsto \overline{X}$,
the complex canonical transformation $\kappa_t$ is positive relative to $\R^{2n}$ for all $t \ge 0$ if and only if for every $X \in \C^{2n}$ the real-valued function
\begin{align} \label{definition little g}
    r(t) = \frac{1}{i}\left(\sigma(\kappa_t(X), \overline{\kappa_t(X)})-\sigma(X,\overline{X})\right), \ \ \ t \in \R,
\end{align}
is non-negative for all $t \ge 0$. Recalling that
\begin{align}
    \sigma(X,Y) = JX \cdot Y, \ \ \ X, Y \in \C^n,
\end{align}
we see that (\ref{definition little g}) may be rewritten as
\begin{align} \label{definition little g rewritten}
    r(t) = \frac{1}{i} \left(J e^{-2itF} X \cdot e^{2it \overline{F}} \overline{X} - J X \cdot \overline{X} \right), \ \ \ t \ge 0.
\end{align}
Differentiating (\ref{definition little g rewritten}) with respect to $t$ gives
\begin{align} \label{g primed of t}
    r'(t) = 2(\overline{F}^T J - J F) e^{-2itF} X \cdot \overline{e^{-2itF} X}.
\end{align}
In view of (\ref{definition Hamilton matrix of q}), we have
\begin{align} \label{non-negative of matrices}
    \overline{F}^T J - JF = 2 \textrm{Re} \ Q \ge 0.
\end{align}
Integrating (\ref{g primed of t}) from $0$ to $t$ and using (\ref{non-negative of matrices}), we find that (\ref{definition little g}) is non-negative for $t \ge 0$.

Let $\tilde{\kappa}_t := \exp{(tH_{-i\tilde{q}})}$, $t \in \R$, be the Hamilton flow of $-i\tilde{q}$. By Jacobi's theorem,
\begin{align*}
    \tilde{\kappa}_t = \kappa_\varphi \circ \kappa_t \circ \kappa_\varphi^{-1}, \ \ \ t \in \R.
\end{align*}
Because $\kappa_t^* \sigma = \sigma$ and $\kappa_\varphi(\R^{2n}) = \Lambda_\Phi$, the flow $\tilde{\kappa}_t$ is positive relative to $\Lambda_\Phi$ for each $t \ge 0$. From the results of \cite{ComplexFIOs} (see also the discussion at the beginning of Section 5 above) there is a one-parameter family $\Phi_t$, $t \ge 0$, of strictly plurisubharmonic quadratic forms on $\C^n$ with $\Phi_t \le \Phi$ for all $t \ge 0$ such that $\Phi_0 = \Phi$ and 
\begin{align} \label{image of I-Lagrangian}
    \tilde{\kappa}_t(\Lambda_\Phi) = \Lambda_{\Phi_t}, \ \ \ t \ge 0.
\end{align}
It turns out that $\Phi_t$, $t \ge 0$, satisfies a natural eikonal equation associated to $\tilde{\kappa}_t$. To the function $\Phi(t,z) = \Phi_t(z)$, we may associate the submanifold
\begin{align*}
    \set{(t,\tau; z, \zeta)}{t \ge 0, z \in \C^n, \tau = \frac{\p \Phi}{\p t}, \ \zeta = \frac{2}{i} \frac{\p \Phi}{\p z}}
\end{align*}
of $\R^2_{t, \tau} \times \C^{2n}_{z,\zeta}$, which is Lagrangian with respect to the real symplectic form
\begin{align} \label{big real symplectic form}
    d\tau \wedge dt - \textrm{Im} \ \sigma.
\end{align}
For $g \in \textrm{Hol}(\C^{2n})$, we denote by $\widehat{H_g}$ the real vector field on $\C^{2n}$ corresponding to the holomorphic vector field $H_g$:
\begin{align*}
    \widehat{H_g} = H_g + \overline{H_g}.
\end{align*}
From the general relation \cite{AnalyticMicrolocal_Analysis}, we know that
\begin{align*}
    \widehat{H_{-i\tilde{q}}} = H^{-\textrm{Im} \ \sigma}_{\textrm{Re} \ \tilde{q}},
\end{align*}
where $H^{-\textrm{Im} \ \sigma}_{\textrm{Re} \ \tilde{q}}$ denotes the Hamilton vector field of $\textrm{Re} \ \tilde{q}$ on $\C^{2n}$ taken with respect to $-\textrm{Im} \ \sigma$. Applying the Hamilton-Jacobi theory (see Chapter 1 of \cite{dimassi_sjostrand}) with respect to the real symplectic form (\ref{big real symplectic form}), we find that $\Phi(t,z)$ satisfies the eikonal equation
\begin{align} \label{Eikonal equation for Phi}
\begin{cases}
\frac{\p \Phi}{\p t}(t,z) + \textrm{Re} \ \tilde{q} \left(z, \frac{2}{i} \frac{\p \Phi}{\p z}(t,z) \right) = 0, \ \ t \ge 0, \ z \in \C^n, \\
\Phi(0,\cdot) = \Phi \  \textrm{on $\C^n$.}
\end{cases}
\end{align}



Now we prove that $\widetilde{G}(t)$, $t \ge 0$, is a metaplectic Fourier operator in the complex domain whose underlying complex canonical transformation at time $t$ is $\tilde{\kappa}_t$. For $0 \le t \ll 1$, this may be accomplished by a standard geometrical optics construction (see for instance Section 3 of \cite{QuadraticOperators} or Section 2 of \cite{SubellipticEstimates}). However, it is actually possible to construct $\widetilde{G}(t)$ as a metaplectic Fourier integral operator directly in the Bergman form (\ref{definition of G}) for all $t \ge 0$. To the best of our knowledge, the idea of representing evolution semigroups on the FBI transform side as Fourier integral operators in Bergman form was introduced by J. Sj\"{o}strand in the work \cite{semigroup2resolvent}. The technique we present below may be viewed as a linearized version of the construction given in \cite{semigroup2resolvent}, valid for all positive times thanks to the positivity of $\tilde{\kappa}_t$ relative to $\Lambda_\Phi$.

We search for $\widetilde{G}(t)$ of the form
\begin{align} \label{Bergman ansatz}
	\widetilde{G}(t)u(z) = \hat{a}(t) \int_{\C^n} e^{2 \Psi_t(z,\overline{w})} u(w) e^{-2\Phi(w)} \, L(dw), \ \ u \in H_\Phi(\C^n),
\end{align}
where $\hat{a} \in C^\infty([0,\infty); \C)$ is non-vanishing and $\Psi_t(\cdot, \cdot)$ is a holomorphic quadratic form on $\C^{2n}$ with coefficients depending smoothly on $t$ for $t \ge 0$. Our objective is to choose $\hat{a}$ and $\Psi_t(\cdot, \cdot)$ so that $\widetilde{G}(t)$ solves the operator initial value problem
\begin{align} \label{operator_initial_value_problem}
	\begin{cases}
		\p_t \widetilde{G}(t) + \tilde{q}^w(z,D) \widetilde{G}(t) = 0, \ \ t \ge 0,\\
		\widetilde{G}(0) = I \ \textrm{on} \ H_\Phi(\C^n).
	\end{cases}
\end{align}
To this end, let us rewrite (\ref{Bergman ansatz}) in the form
\begin{align} \label{Bergman_ansatz_contour_integral}
	\widetilde{G}(t)u = a(t) \int_{\Gamma} e^{2 \Psi_t(z,\theta) - 2 \Psi(w,\theta)} u(w) \, dw \wedge d\theta, \ \ u \in H_\Phi(\C^n),
\end{align}
where
\begin{align} \label{anti-diagonal_contour}
	\Gamma = \set{(w,\theta) \in \C^{2n}}{\theta = \overline{w}}
\end{align}
is the anti-diagonal in $\C^{2n}$, $a(t) = (i/2)^{n} \hat{a}(t)$, $t \ge 0$, and $\Psi(\cdot, \cdot)$ is the polarization of $\Phi$. Thanks to (\ref{quadratic_differential_operator_on_the_FBI_side_formula}) and the well-known formula for the conjugation of a Weyl differential operator by a quadratic exponential (see, for instance, the proof of Theorem 10.6 in \cite{SemiclassicalAnalysis}), we know that
\begin{align} \label{conjugated_operator}
	e^{-2\Psi_t(z,\theta)} \circ (\p_t + \tilde{q}^w(z,D)) \circ e^{2 \Psi_t(z, \theta)} = \p_t+2 \p_t \Psi_t(z,\theta)+\tilde{q}^w_{\frac{2}{i} \Psi_t(\cdot, \theta)}(z,D), \ \ \theta \in \C^n,
\end{align}
where
\begin{align}
	\tilde{q}_{\frac{2}{i}\Psi_t(\cdot, \theta)}(z,\zeta) = \tilde{q} \left(z, \zeta+\frac{2}{i}\p_z \Psi_t(z,\theta)\right), \ \ (z,\zeta) \in \C^{2n}, \ \theta \in \C^n.
\end{align}
Assume that $\Psi_t(\cdot, \theta)$ satisfies the eikonal equation
\begin{align}
	2 \p_t \Psi_t(z,\theta) + \tilde{q}\left(z, \frac{2}{i} \p_z \Psi_t(z,\theta)\right) = 0, \ \ t \ge 0, \ z \in \C^n, \ \theta \in \C^n.
\end{align}
Using that $\tilde{q}$ is quadratic, we see that the conjugated operator (\ref{conjugated_operator}) is equal to
\begin{align}
	\p_t + \left(\p_\zeta \tilde{q} \left(z, \frac{2}{i} \p_z \Psi_t(z,\theta)\right) \cdot \zeta \right)^w + \frac{1}{2} \left(\p^2_{\zeta \zeta} \tilde{q}\right)D_z \cdot D_z, \ \ z \in \C^n, \ \theta \in \C^n, \ t \ge 0.
\end{align}
Introducing the holomorphic vector field
\begin{align}
	\nu(z, \p_z) = \p_\zeta \tilde{q}\left(z, \frac{2}{i} \p_z \Psi_t(z,\theta) \right) \cdot \p_z, \ \ z \in \C^n,
\end{align}
we deduce
\begin{align}
	e^{-2\Psi_t(z,\theta)} \circ (\p_t + \tilde{q}^w(z,D)) \circ e^{2 \Psi_t(\cdot, \theta)} = \p_t + \frac{1}{i} \nu(z,\p_z)+\frac{1}{2i} \textrm{div}(\nu) + \frac{1}{2} \left(\p^2_{\zeta \zeta} \tilde{q} \right) D_z \cdot D_z
\end{align}
for $z\in \C^n$, $\theta \in \C^n$, and $t \ge 0$. Here $\textrm{div}(\nu)$ denotes the holomorphic divergence of $\nu$,
\begin{align}
	\textrm{div}(\nu) = \sum_{j=1}^n \p^2_{\zeta_j z_j} \tilde{q} + \sum_{j=1}^n \sum_{k=1}^n \left(\p^2_{\zeta_j \zeta_k} \tilde{q}\right) \left(\frac{2}{i} \p^2_{z_j z_k} \Psi_t \right) = \trace{\p^2_{\zeta z} \tilde{q} + \p^2_{\zeta \zeta} \tilde{q} \cdot \frac{2}{i} \p^2_{zz} \Psi_t}.
\end{align}
Thus, if we are to have
\begin{align}
	\left(\p_t+\tilde{q}^w(z,D) \right)\left(e^{2 \Psi_t(\cdot, \theta)} a(t) \right) \equiv 0 \ \textrm{on $\C^n$}, \ \ \theta \in \C^n, \ \ t \ge 0,
\end{align}
it suffices to choose $a(t)$ so that
\begin{align}
	a'(t) + \frac{1}{2i} \beta(t) a(t) = 0, \ \ t \ge 0,
\end{align}
where
\begin{align}
	\beta(t) = \trace{\p^2_{\zeta z} \tilde{q} + \p^2_{\zeta \zeta} \tilde{q} \cdot \frac{2}{i} \p^2_{zz} \Psi_t}, \ \ t \ge 0.
\end{align}
Demanding also that $\Psi_t(z,\theta)|_{t=0} = \Psi(z,\theta)$ and $a(0) = (i/2)^n C_\Phi$, where $C_\Phi$ is as in (\ref{Bergman_constant}), we may ensure that $\widetilde{G}(0)$ coincides with the Bergman projection (\ref{definition_Bergman_projector}) and hence that the initial condition $\widetilde{G}(0) = I$ on $H_\Phi(\C^n)$ is satisfied. We conclude that if we are to produce a solution $\widetilde{G}(t)$ of the operator initial value problem (\ref{operator_initial_value_problem}) of the form (\ref{Bergman ansatz}), we should choose $\Psi_t(\cdot, \cdot)$ so that
\begin{align} \label{initial_value_problem_for_phase}
	\begin{cases}
		2\p_t \Psi_t(z, \theta) + \tilde{q} \left(z, \frac{2}{i} \p_z \Psi_t(z,\theta) \right) = 0, \ \ z, \theta \in \C^n, \ t \ge 0, \\
		\Psi_0(z,\theta) = \Psi(z,\theta), \ \ z \in \C^n, \ \theta \in \C^n,
	\end{cases}
\end{align}
and choose $\hat{a}(t)$ so that
\begin{align} \label{IVP_for_a}
	\begin{cases}
		\hat{a}'(t) + \frac{1}{2i}\beta(t) \hat{a}(t) = 0, \ \ t\ge 0, \\
		\hat{a}(0) = C_\Phi.
	\end{cases}
\end{align}
As the initial value problem (\ref{IVP_for_a}) can be solved by elementary methods once $\Psi_t(\cdot, \cdot)$ is known, we will focus our attention on solving the problem (\ref{initial_value_problem_for_phase}). We note that, since (\ref{IVP_for_a}) is a linear ordinary differential equation and $C_\Phi \neq 0$, the solution $\hat{a}(t)$ of (\ref{IVP_for_a}) will be non-vanishing for all $t \ge 0$. Taking the real part of (\ref{initial_value_problem_for_phase}) gives
\begin{align} \label{after_taking_real_part}
	\begin{cases}
		\p_t \left[2 \textrm{Re} \ \Psi_t(z, \theta) \right] + \textrm{Re} \ \tilde{q} \left(z, \frac{2}{i} \p_z \left[2 \textrm{Re} \ \Psi_t(z,\theta)\right]\right) = 0, \ \ z, \theta \in \C^n, \ t\ge 0, \\
		2 \textrm{Re} \ \Psi_0(z,\theta) = 2 \textrm{Re} \ \Psi(z,\theta), \ \ z \in \C^n, \ \theta \in \C^n.
	\end{cases}
\end{align}
We observe that (\ref{after_taking_real_part}) implies 
\begin{align} \label{mapping_of_eikonals}
	\Lambda_{2 \textrm{Re} \ \Psi_t(\cdot, \theta)} = \tilde{\kappa}_t \left(\Lambda_{2 \textrm{Re} \ \Psi(\cdot, \theta)}\right), \ \ \theta \in \C^n, \ t \ge 0,
\end{align}
where $\Lambda_{2 \textrm{Re} \ \Psi_t(\cdot, \theta)}$ and $\Lambda_{2 \textrm{Re} \ \Psi(\cdot, \theta)}$ denote the $\C$-Lagrangian subspaces of $\C^{2n}$ given by
\begin{align}
	\Lambda_{2 \textrm{Re} \ \Psi_t(\cdot, \theta)} = \set{\left(z, \frac{2}{i} \p_z \left[2 \textrm{Re} \ \Psi_t(z, \theta) \right]\right)}{z \in \C^n}, \ \ \theta \in \C^n, \ \ t \ge 0,
\end{align}
and
\begin{align} \label{evolution_of_eikonals}
	\Lambda_{2 \textrm{Re} \ \Psi(\cdot, \theta)} = \set{\left(z, \frac{2}{i} \p_z \left[2 \textrm{Re} \ \Psi(z, \theta) \right]\right)}{z \in \C^n}, \ \ \theta \in \C^n,
\end{align}
respectively. As a consequence of the fundamental estimate (\ref{fundamental_estimate}), there is a constant $c>0$ such that
\begin{align}
	2 \textrm{Re} \ \Psi(z,0) \le \Phi(z) - c \abs{z}^2, \ \ z \in \C^n.
\end{align}
From  Theorem 2.1 of \cite{ComplexFIOs} it follows that the $\C$-Lagrangian subspace $\Lambda_{2 \textrm{Re} \ \Psi(\cdot, 0)}$ of $\C^{2n}$ is positive relative to $\Lambda_{\Phi}$. As $\tilde{\kappa}_t$ is positive relative to $\Lambda_\Phi$ for all $t \ge 0$, (\ref{mapping_of_eikonals}) implies that $\Lambda_{2 \textrm{Re} \ \Psi_t(\cdot, 0)}$ is positive relative to $\Lambda_\Phi$ for all $t \ge 0$. This observation, combined with Theorem 2.1 of \cite{ComplexFIOs}, implies that (\ref{after_taking_real_part}) may be solved for all $t \ge 0$, first in the case $\theta = 0$, and then for general $\theta \in \C^n$. Thus we obtain a holomorphic quadratic form $\Psi_t(\cdot, \cdot)$ depending analytically on $t$ for $t \ge 0$ that solves the initial value problem (\ref{initial_value_problem_for_phase}). It follows that $\widetilde{G}(t)$ given by (\ref{Bergman ansatz}) satisfies (\ref{operator_initial_value_problem}).

Finally, let us check that $\widetilde{G}(t)$ is a metaplectic Fourier integral operator in the sense of Section 4. Writing $\widetilde{G}(t)$ as the contour integral (\ref{Bergman_ansatz_contour_integral}), we see that $\widetilde{G}(t)$ is of the form (\ref{Formal FIO}) with phase function
\begin{align} \label{Bergman_phase_function}
	\phi_t(z,w,\theta) =  \frac{2}{i} \Psi_t(z,\theta) - \frac{2}{i} \Psi(w,\theta), \ \ (z,w,\theta) \in \C^{3n}, \ \ t \ge 0.
\end{align}
Since $\Phi$ is strictly plurisubharmonic, the phase $\phi_t$ is easily seen to satisfy H\"{o}rmander's non-degeneracy condition (\ref{nondegeneracy_condition}) for every $t \ge 0$. Moreover, the relation (\ref{mapping_of_eikonals}) implies that the phase $\phi_t$ generates $\textrm{graph}(\tilde{\kappa}_t)$ in the sense of (\ref{canonical transformation associated to metaplectic FIO}) for all $t \ge 0$. As $\p_\theta \phi_t(z,w,\theta) = 0$ for $(z,w,\theta) \in \C^{3n}$ and $t \ge 0$ if and only if $\p_\theta \Psi_t(z,\theta) = \p_\theta \Psi(w,\theta)$, it follows that $\tilde{\kappa}_t: \C^{2n} \rightarrow \C^{2n}$ is given implicitly by
\begin{align}
	\tilde{\kappa}_t: \left(w, \frac{2}{i} \p_w \Psi(w,\theta) \right) \mapsto \left(z, \frac{2}{i} \p_z \Psi_t(z,\theta) \right), \ \p_\theta \Psi_t(z,\theta) = \p_\theta \Psi(w,\theta), \ \ z,w, \theta \in \C^n, \ t \ge 0.
\end{align}
We conclude that for all $t \ge 0$ the operator $\widetilde{G}(t)$ is indeed a metaplectic Fourier integral operator in the complex domain with underlying canonical transformation $\tilde{\kappa}_t$. By Proposition \ref{main imported proposition}, (\ref{Bergman ansatz}) is the Bergman form of $\widetilde{G}(t)$ for every $t \ge 0$.

Let us verify that
\begin{align} \label{identity_level_of_distributions}
	\forall u \in \mathcal{S}'(\R^n), \ \forall t\ge0: \ \ \mathcal{T}_\varphi G(t) u = \widetilde{G}(t) \mathcal{T}_\varphi u.
\end{align}
From the work \cite{GeneralizedMehler}, we know that for every $t \ge 0$ the operator $G(t): L^2(\R^n) \rightarrow L^2(\R^n)$ extends uniquely to a sequentially continuous linear transformation $G(t): \mathcal{S}'(\R^n) \rightarrow \mathcal{S}'(\R^n)$. Thus, by Propositions \ref{density_proposition}, \ref{dual_space_proposition} and \ref{mapping properties of metaplectic FIO}, it suffices to show that for every $u \in \mathcal{S}'(\R^n)$ and $t \ge 0$, we have
\begin{align} \label{verification_by_duality}
	\int_{\C^n} \mathcal{T}_\varphi G(t) u(z) \overline{v(z)} e^{-2\Phi(z)} \, L(dz) = \int_{\C^n} \widetilde{G}(t) \mathcal{T}_\varphi u(z) \overline{v(z)} e^{-2\Phi(z)} \, L(dz)
\end{align}
for all $v \in H^\infty_\Phi(\C^n)$. For any $u \in \mathcal{S}'(\R^n)$, $v \in H^{\infty}_\Phi(\C^n)$, and $t \ge 0$, we have
\begin{align}
	\begin{split}
		\int_{\C^n} \mathcal{T}_\varphi G(t) u(z) \overline{v(z)} e^{-2\Phi(z)} \, L(dz) &= \int_{\C^n} \langle G(t)u, c_\varphi e^{i \varphi(z, \cdot)} \overline{v(z)} e^{-2\Phi(z)} \rangle \, L(dz) \\
		&=\langle G(t)u, \overline{\mathcal{T}_\varphi^* v} \rangle \\
		&= \langle u, \overline{G(t)^* \mathcal{T}^*_\varphi v} \rangle,
	\end{split}
\end{align}
where $\mathcal{T}_\varphi^*$ is the adjoint of $\mathcal{T}_\varphi: L^2(\R^n) \rightarrow H_\Phi(\C^n)$ and $G(t)^*$ is the adjoint of $G(t)$ taken in the sense of distributions. The identity (\ref{conjugated_propagator}) implies
\begin{align}
	G(t)^* \mathcal{T}^*_\varphi v = (\mathcal{T}_\varphi G(t))^* v = (\widetilde{G}(t) \mathcal{T}_\varphi)^* v = \mathcal{T}^*_\varphi \widetilde{G}(t)^*v.
\end{align}
Thus
\begin{align} \label{intermediate_equality}
	\langle u, \overline{G(t)^* \mathcal{T}^*_\varphi v} \rangle = \langle u, \overline{\mathcal{T}^*_\varphi \widetilde{G}(t)^*v} \rangle = \int_{\C^n} \mathcal{T}_\varphi u(w) \overline{\widetilde{G}(t)^* v(w)} e^{-2\Phi(w)} \, L(dw).
\end{align}
As a consequence of (\ref{Bergman ansatz}), 
\begin{align} \label{adjoint_of_propagator_FBI_side}
	\widetilde{G}(t)^* v(w) = \overline{\hat{a}(t)} \int_{\C^n} e^{2 \overline{\Psi_t(z,\overline{w})}} v(z) e^{-2\Phi(z)} \, L(dz).
\end{align}
Putting (\ref{adjoint_of_propagator_FBI_side}) into (\ref{intermediate_equality}), interchanging the order of integration, and using (\ref{Bergman ansatz}) gives (\ref{verification_by_duality}).

Having established that $\widetilde{G}(t)$ is a metaplectic Fourier integral operator whose underlying complex canonical transformation at time $t$ is $\tilde{\kappa}_t$, we can apply the results of Section 5 to study the propagation of $1/2$-Gelfand-Shilov singularities by the semigroup $G(t)$, via the identity (\ref{identity_level_of_distributions}). We begin by giving a characterization of the singular space $S$ of $q$ in terms of the intersection $\Lambda_\Phi \cap \Lambda_{\Phi_t}$ for $t>0$.
\begin{proposition} \label{geometric_proposition} For all $t>0$,
	\begin{align} \label{characterization of singular space claim}
    S = \kappa_\varphi^{-1}(\Lambda_\Phi \cap \Lambda_{\Phi_t}).
\end{align}
\end{proposition}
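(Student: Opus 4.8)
The plan is to conjugate away $\kappa_\varphi$ so that (\ref{characterization of singular space claim}) reduces to a statement purely about the Hamilton flow $\kappa_t = e^{-2itF}$ of $-iq$ on $\C^{2n}$, namely
\begin{align*}
S = \R^{2n} \cap \kappa_t(\R^{2n}), \qquad t > 0,
\end{align*}
and then to extract everything from the positivity computation $r'(t)\ge 0$ already performed in this section. The reduction is immediate: since $\kappa_\varphi(\R^{2n}) = \Lambda_\Phi$, $\tilde{\kappa}_t = \kappa_\varphi \circ \kappa_t \circ \kappa_\varphi^{-1}$ by Jacobi's theorem, and $\tilde{\kappa}_t(\Lambda_\Phi) = \Lambda_{\Phi_t}$ by (\ref{image of I-Lagrangian}), one has $\Lambda_{\Phi_t} = \kappa_\varphi(\kappa_t(\R^{2n}))$, and applying the linear bijection $\kappa_\varphi^{-1}$, which commutes with intersection, gives $\kappa_\varphi^{-1}(\Lambda_\Phi\cap\Lambda_{\Phi_t}) = \R^{2n}\cap\kappa_t(\R^{2n})$; by (\ref{Hamilton flow of -iq in terms of F}) this last set is $\{X\in\R^{2n} : e^{2itF}X\in\R^{2n}\}$.

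For the inclusion $S\subseteq\R^{2n}\cap\kappa_t(\R^{2n})$ — which I expect to hold for every $t\in\R$ — I would use the complexified singular space $S_\C = \bigcap_{j\ge 0}\ker[(\textrm{Re}\,F)(\textrm{Im}\,F)^j]$, which equals the finite intersection in (\ref{definition of singular space}) by the Cayley--Hamilton theorem, so that $S = S_\C\cap\R^{2n}$. The $j=0$ term gives $S_\C\subseteq\ker(\textrm{Re}\,F)$, and if $Y\in S_\C$ then $(\textrm{Re}\,F)(\textrm{Im}\,F)^{j}(\textrm{Im}\,F\,Y) = (\textrm{Re}\,F)(\textrm{Im}\,F)^{j+1}Y = 0$ for all $j$, so $(\textrm{Im}\,F)(S_\C)\subseteq S_\C$. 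Hence for $X\in S$ one has $FX = i(\textrm{Im}\,F)X\in S_\C$, and inductively $F^k X = (i\,\textrm{Im}\,F)^k X$, so $\kappa_t X = e^{-2itF}X = e^{2t\,\textrm{Im}\,F}X$. As $\textrm{Im}\,F$ is a real matrix this lies in $\R^{2n}$ and $X = \kappa_t(e^{-2t\,\textrm{Im}\,F}X)\in\kappa_t(\R^{2n})$, so $X\in\R^{2n}\cap\kappa_t(\R^{2n})$.

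The heart of the proposition is the reverse inclusion $\R^{2n}\cap\kappa_{t_0}(\R^{2n})\subseteq S$ for $t_0 > 0$, and this is where $\textrm{Re}\,q\ge 0$ and the positivity of $\kappa_t$ relative to $\R^{2n}$ come in. Given $X\in\R^{2n}$ with $\kappa_{t_0}X\in\R^{2n}$, consider $r(t)$ from (\ref{definition little g}). Since $\sigma(Z,Z) = 0$ for every $Z\in\C^{2n}$ and $X$, $\kappa_{t_0}X$ are both real, $r(0) = r(t_0) = 0$. On the other hand $r'(t) = 4(\textrm{Re}\,Q)(\kappa_t X)\cdot\overline{\kappa_t X}\ge 0$ by (\ref{g primed of t})--(\ref{non-negative of matrices}), because $\textrm{Re}\,Q$ is a real symmetric positive-semidefinite matrix and hence induces a positive-semidefinite Hermitian form on $\C^{2n}$ whose isotropic vectors are precisely those annihilated by $\textrm{Re}\,Q$. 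Therefore $r'\equiv 0$ on $[0,t_0]$, which forces $(\textrm{Re}\,Q)\kappa_t X = 0$, hence $(\textrm{Re}\,F)\kappa_t X = 0$ (using $\textrm{Re}\,F = J\,\textrm{Re}\,Q$), for \emph{every} $t\in[0,t_0]$. Feeding this back into the flow equation $\frac{d}{dt}\kappa_t X = -2iF\kappa_t X = 2(\textrm{Im}\,F)\kappa_t X$ on $[0,t_0]$ gives $\kappa_t X = e^{2t\,\textrm{Im}\,F}X$ there, and differentiating the identity $(\textrm{Re}\,F)e^{2t\,\textrm{Im}\,F}X = 0$ repeatedly at $t = 0$ yields $(\textrm{Re}\,F)(\textrm{Im}\,F)^j X = 0$ for all $j\ge 0$, i.e.\ $X\in S$.

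The main obstacle is the step in the last paragraph that upgrades the pointwise vanishing of $r'$ on $[0,t_0]$ to the \emph{functional} identity $(\textrm{Re}\,F)\kappa_t X\equiv 0$ on $[0,t_0]$: it is exactly this that makes it legitimate to differentiate in $t$ and thereby climb the whole tower $(\textrm{Re}\,F)(\textrm{Im}\,F)^j X = 0$ defining $S$, rather than getting only the $j=0$ condition. This is also where the sign $t_0 > 0$ is essential, since only then does the monotonicity of $r$ turn $r(0) = r(t_0) = 0$ into $r'\equiv 0$ on the interval. The remaining pieces — the identity $\textrm{Re}\,F = J\,\textrm{Re}\,Q$, the positive-semidefiniteness bookkeeping for $\textrm{Re}\,Q$ on $\C^{2n}$, and the reduction in the first paragraph — are routine.
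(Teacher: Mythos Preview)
Your argument is correct and complete, but it takes a genuinely different route from the paper's. The paper works on the FBI side: it uses the eikonal equation (\ref{Eikonal equation for Phi}) together with the flow-invariance of $\tilde q$ to show $\partial_t\Phi_t\le 0$, deduces the nesting $\Lambda_\Phi\cap\Lambda_{\Phi_t}=\bigcap_{0\le s\le t}\Lambda_\Phi\cap\Lambda_{\Phi_s}$, conjugates back by $\kappa_\varphi$, and then invokes the dynamical description $S=\bigcap_{0\le s\le t}\ker(\textrm{Im}\,e^{2isF})\cap\R^{2n}$ from \cite{GaborSingularities}. You instead conjugate first and then argue entirely on the real side via the positivity function $r(t)$ already computed in (\ref{definition little g})--(\ref{non-negative of matrices}): the endpoint conditions $r(0)=r(t_0)=0$ plus $r'\ge 0$ force $(\textrm{Re}\,Q)\kappa_tX\equiv 0$ on $[0,t_0]$, which collapses the flow to $e^{2t\,\textrm{Im}\,F}$ and lets you differentiate to recover the full defining tower for $S$. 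This is more self-contained --- no eikonal equation, no external citation --- and it makes transparent exactly where $\textrm{Re}\,q\ge 0$ and the sign of $t$ enter. The paper's approach, on the other hand, yields as a byproduct the stronger nesting statement $\Lambda_\Phi\cap\Lambda_{\Phi_t}=\bigcap_{s\le t}\Lambda_\Phi\cap\Lambda_{\Phi_s}$, which is of independent geometric interest.

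One small wrinkle: in your first paragraph you correctly identify $\R^{2n}\cap\kappa_t(\R^{2n})=\{X\in\R^{2n}:e^{2itF}X\in\R^{2n}\}=\{X\in\R^{2n}:\kappa_t^{-1}X\in\R^{2n}\}$, but in the reverse-inclusion paragraph you take $X\in\R^{2n}$ with $\kappa_{t_0}X\in\R^{2n}$, which is the condition for $\R^{2n}\cap\kappa_{t_0}^{-1}(\R^{2n})$. This is harmless --- your argument proves $\{X\in\R^{2n}:\kappa_{t_0}X\in\R^{2n}\}\subseteq S$, and applying it to $Y=\kappa_{t_0}^{-1}X$ (then using the $\textrm{Im}\,F$-invariance of $S$ you established in the forward direction) gives the stated inclusion --- but you should say so explicitly.
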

\begin{proof}
By Theorem 1.1 of \cite{ComplexFIOs}, the positivity of $\tilde{\kappa_t}$ relative to $\Lambda_\Phi$ implies that the quadratic form $\Phi- \Phi_t$ is non-negative for all $0 \le t < \infty$. Thus,
\begin{align} \label{first equivalence}
    \forall z \in \C^n, \ \forall  \ 0\le t < \infty: \ \left(z, \frac{2}{i} \frac{\p \Phi}{\p z}(z) \right) \in \Lambda_{\Phi_t} &\iff \frac{\p \Phi}{\p z}(z) = \frac{\p \Phi_t}{\p z}(z) \iff \Phi(z)-\Phi_t(z)=0.
\end{align}
According to (\ref{Eikonal equation for Phi}),
\begin{align} \label{want to show left hand side is non positive}
    \frac{\p \Phi_t}{\p t}(z) = -\textrm{Re} \ \tilde{q}\left(z, \frac{2}{i} \frac{\p \Phi_t}{\p z}(z) \right), \ z \in \C^n, \ 0\le t < \infty.
\end{align}
For any fixed $z \in \C^n$ and $0 \le t < \infty$, the point $\left(z, \frac{2}{i} \frac{\p \Phi_t}{\p z}(z) \right)$ belongs to $\Lambda_{\Phi_t} = \tilde{\kappa_t}(\Lambda_{\Phi})$. It follows that for any $z \in \C^n$ there is an $X \in \Lambda_{\Phi}$ such that
\begin{align*}
    \left(z, \frac{2}{i} \frac{\p \Phi_t}{\p z}(z) \right) = \tilde{\kappa_t}(X).
\end{align*}
Since $\tilde{q}$ is invariant under the flow $\tilde{\kappa_t}$,
\begin{align} \label{Hamilton flow preserves positivity}
    \textrm{Re} \ \tilde{q} \left(z, \frac{2}{i} \frac{\p \Phi_t}{\p z}(z) \right) = \textrm{Re} \ \tilde{q}(X).
\end{align}
As $\tilde{q}|_{\Lambda_{\Phi}} \ge 0$, (\ref{Hamilton flow preserves positivity}) and (\ref{want to show left hand side is non positive}) together imply that
\begin{align}
    \frac{\p \Phi_t}{\p t}(z) \le 0
\end{align}
for every $z \in \C^n$ and $0 \le t < \infty$. Hence,
\begin{align*}
    \forall z \in \C^n, \ \forall \ 0\le t < \infty: \ \left(z, \frac{2}{i} \frac{\p \Phi}{\p z}(z) \right) \in \Lambda_{\Phi_t} \iff \Phi(z)-\Phi_s(z) = 0 \ \textrm{for all} \ 0 \le s \le t.
\end{align*}
Because the quadratic form $\Phi-\Phi_s$ is non-negative, $\Phi(z)-\Phi_s(z) = 0$ if and only if $\p_z \Phi(z) = \p_z \Phi_s(z)$, and we have
\begin{align*}
    \forall z \in \C^n, \ \forall \ 0\le t < \infty: \ \left(z, \frac{2}{i} \frac{\p \Phi}{\p z}(z) \right) \in \Lambda_{\Phi_t} \iff \left(z, \frac{2}{i} \frac{\p \Phi}{\p z}(z) \right) \in \Lambda_{\Phi_s} \ \textrm{for all} \ 0 \le s \le t.
\end{align*}
Therefore
\begin{align} \label{intersection of Lambda phi's}
    \Lambda_{\Phi} \cap \Lambda_{\Phi_t} = \bigcap_{0\le s \le t} \Lambda_{\Phi} \cap \Lambda_{\Phi_s}
\end{align}
for every $t > 0$. Applying $\kappa_{\varphi}^{-1}$ to both sides of $(\ref{intersection of Lambda phi's})$ and using Jacobi's theorem and (\ref{Hamilton flow of -iq in terms of F}), we find that
\begin{align*}
    \kappa_{\varphi}^{-1}(\Lambda_{\Phi} \cap \Lambda_{\Phi_t}) = \set{X \in \R^{2n}}{e^{2isF} X \in \R^{2n} \ \textrm{for all} \ 0 \le s \le t }
\end{align*}
for all $t>0$. Thus, if $X \in \C^{2n}$, then
\begin{align} \label{first condition on imaginary part}
    X \in \kappa_{\varphi}^{-1}(\Lambda_\Phi \cap \Lambda_{\Phi_t})
   \iff \textrm{Im}\left(e^{2isF} X \right) = 0
\end{align}
for every $0\le s \le t$. From the discussion on page 22 of \cite{GaborSingularities} and the real analyticity of the mapping $t \mapsto \left(\textrm{Im} \ e^{2itF} \right)(X)$ for any fixed $X \in \R^{2n}$, we know that
\begin{align} \label{dynamical interpretation of singular space}
    S = \bigcap_{0\le s \le t} \ker{\left(\textrm{Im} \ e^{2isF}\right)} \cap \R^{2n} = \bigcap_{s \in \R} \ker{(\textrm{Im} \ e^{2isF})} \cap \R^{2n}
\end{align}
for every $t>0$. Therefore (\ref{characterization of singular space claim}) must hold for every $t>0$.
\end{proof}

We now conclude the proof of Theorem \ref{main theorem of the paper}. First, we verify that $S$ is invariant under $\kappa_t$ for every $t \in \R$. From (\ref{Hamilton flow of -iq in terms of F}) and (\ref{definition of singular space}), we see that
\begin{align} \label{acts_as_imaginary_flow}
	\kappa_t(X) = e^{tH_{\textrm{Im} \ q}} X = \sum_{k=0}^\infty \frac{(2t)^k}{k!} (\textrm{Im} \ F)^k X
\end{align}
for all $X \in S$ and $t \in \R$. Now, let $X \in S$, $Y \in \C^{2n}$, and $t \in \R$ be such that
\begin{align} \label{showing singular space is invariant by Hamilton flow}
    \kappa_t(X) = Y.
\end{align}
As a consequence of (\ref{dynamical interpretation of singular space}), we have $Y \in \R^{2n}$. Also, (\ref{acts_as_imaginary_flow}) and (\ref{definition of singular space}) together imply
\begin{align*}
(\textrm{Re} \ F)(\textrm{Im} \ F)^j Y = \sum_{k=0}^\infty \frac{(2t)^k}{k!} (\textrm{Re} \ F)(\textrm{Im} \ F)^{j+k} X = 0, \ \ j \in \N.
\end{align*}
Thus
\begin{align} \label{first_Hamilton_inclusion}
    \kappa_t(S) \subset S, \ \ \ t \in \R.
\end{align}
Since (\ref{first_Hamilton_inclusion}) implies that
\begin{align}
	\kappa_{-t}(S) \subset S, \ \ t \in \R,
\end{align}
holds as well, we deduce that
\begin{align} \label{singular space is invariant under Hamilton flow}
	\kappa_t(S) = S, \ \ t \in \R.
\end{align}

The invariance of $S$ under $\kappa_t$ for every $t \in \R$, Jacobi's theorem, and Proposition \ref{geometric_proposition} give
\begin{align}
    \tilde{\kappa}_t(\Lambda_\Phi \cap \Lambda_{\Phi_t}) = \Lambda_\Phi \cap \Lambda_{\Phi_t}
\end{align}
for every $t \ge 0$. Thus, the metaplectic Fourier integral operator $\widetilde{G}(t)$ and its underlying canonical transformation $\tilde{\kappa}_t$ satisfy the hypotheses of Theorem \ref{main theorem} for every $t \ge 0$. Let $\textrm{pr}_\Phi = \pi_1|_{\Lambda_\Phi}$, $\textrm{pr}_{\Phi_t} = \pi_1|_{\Lambda_{\Phi_t}}$ for $t \ge 0$, and $\tilde{\kappa}^\flat_t = \textrm{pr}_{\Phi_t} \circ \tilde{\kappa}_t \circ \textrm{pr}_{\Phi}^{-1}$ for $t \ge 0$. By Theorem \ref{main theorem}, we have
\begin{align} \label{almost there}
    \textrm{WF}^{1/2}_\Phi(\widetilde{G}(t)u) = \tilde{\kappa}^\flat_t\left(\textrm{WF}^{1/2}_\Phi(u) \right) \cap \textrm{Rad}(\Phi-\Phi_t)
\end{align}
for every $u \in H^{-\infty}_\Phi(\C^n)$ and every $t \ge 0$. Applying $(\textrm{pr}_\Phi \circ \kappa_\varphi|_{\R^{2n}})^{-1}$ to both sides of (\ref{almost there}) and using Jacobi's theorem, ({\ref{identity_level_of_distributions}), (\ref{characterization of singular space claim}), (\ref{singular space is invariant under Hamilton flow}), and Proposition \ref{image_of_FBI_transform}, we get
\begin{align} \label{almost_almost_there}
    \textrm{WF}^{1/2}(G(t)u_0) = \kappa_t(\textrm{WF}^{1/2}(u_0) \cap S)
\end{align}
for every $u_0 \in \mathcal{S}'(\R^n)$ and every $t > 0$. From (\ref{almost_almost_there}) and (\ref{acts_as_imaginary_flow}), we may therefore deduce
\begin{align}
    \textrm{WF}^{1/2}(G(t)u_0) = \exp{(tH_{\textrm{Im} \ q})}(\textrm{WF}^{1/2}(u_0) \cap S)
\end{align}
for every $u_0\in \mathcal{S}'(\R^n)$ and $t > 0$. The proof of Theorem \ref{main theorem of the paper} is complete.

\bibliographystyle{plain}
\bibliography{references}

\end{document}